\providecommand{\tabularnewline}{\\}
\numberwithin{equation}{section}
\numberwithin{figure}{section}
\theoremstyle{plain}
\newtheorem{thm}{Theorem}
  \theoremstyle{plain}
  \newtheorem{prop}[thm]{Proposition}
  \theoremstyle{plain}
  \newtheorem{cor}[thm]{Corollary}
  \theoremstyle{plain}
  \newtheorem{lem}[thm]{Lemma}
  \theoremstyle{remark}
  \newtheorem{rem}[thm]{Remark}
\begin{document}
\global\long\global\long\def\Alb{{\rm Alb}}
 \global\long\global\long\def\Jac{{\rm Jac}}
 \global\long\global\long\def\Hom{{\rm Hom}}
 \global\long\global\long\def\End{{\rm End}}
 \global\long\global\long\def\aut{{\rm Aut}}
 \global\long\global\long\def\NS{{\rm NS}}
 \global\long\global\long\def\SSm{{\rm S}}
 \global\long\global\long\def\psl{{\rm PSL}}
 \global\long\global\long\def\CC{\mathbb{C}}
 \global\long\global\long\def\BB{\mathbb{B}}
 \global\long\global\long\def\PP{\mathbb{P}}
 \global\long\global\long\def\QQ{\mathbb{Q}}
 \global\long\global\long\def\RR{\mathbb{R}}
 \global\long\global\long\def\FF{\mathbb{F}}
 \global\long\global\long\def\DD{\mathbb{D}}
 \global\long\global\long\def\NN{\mathbb{N}}
 \global\long\global\long\def\ZZ{\mathbb{Z}}
 \global\long\global\long\def\HH{\mathbb{H}}
 \global\long\global\long\def\gal{{\rm Gal}}
 \global\long\global\long\def\OO{\mathcal{O}}
 \global\long\global\long\def\oo{{\scriptstyle {\mathcal{O}}}}
 \global\long\global\long\def\pP{\mathfrak{p}}
 \global\long\global\long\def\Sl{{\rm SL}}
 \global\long\global\long\def\Gl{{\rm GL}}
 \global\long\global\long\def\pgl{{\rm PGL}}
 \global\long\global\long\def\Nrd{{\rm Nrd}}
 \global\long\global\long\def\Trd{{\rm Trd}}

\title{Automorphisms and quotients of quaternionic fake quadrics}

\author{Amir {D}\v{z}ambi\'{c}, Xavier Roulleau}
\begin{abstract}
A fake quadric is a smooth surface of general type with the same invariants
as the quadric in $\mathbb{P}^{3}$, i.e. $c_{1}^{2}=8,\, c_{2}=4$
and $q=p_{g}=0$. We study here quaternionic fake quadrics i.e. fake
quadrics constructed arithmetically by using quaternion algebras
over real quadratic number fields. We provide examples of quaternionic fake
quadrics $X$ with a non-trivial automorphism group and compute the
invariants of the minimal desingularisation of the quotient of $X$
by this group. In that way we obtain minimal surfaces $Z$ of general
type with $q=p_{g}=0$ and $K^{2}=4$ or $2$ which contain the maximal
number of disjoint $(-2)$-curves. We then prove that if a surface
of general type has the same invariant as $Z$ and same number of
$(-2)$-curves, then we can construct geometrically a surface of general
type with $c_{1}^{2}=8,\ c_{2}=4$. 
\end{abstract}
\maketitle
\global\long\global\long\def\thethm{\Alph{thm}}

\emph{Key-Words:} Surfaces of general type, Fake Quadrics, Automorphisms,
Godeaux surfaces, Campedelli surfaces, Surfaces with $q=p_{g}=0$.

\emph{AMS subject Classification} 14J29, 14G35, 11F06, 14J50.

\section{Introduction}

A \emph{fake quadric} is a smooth minimal surface of general type
with the same numerical invariants as the quadric $\mathbb{P}^{1}\times\mathbb{P}^{1}$
i.e. with Chern numbers $c_{1}^{2}=8,\, c_{2}=4$ and vanishing geometric
genus $p_{g}=0$. Two classes of examples of such surfaces are known
and these two classes are both quotients of $\mathbb{H}\times\mathbb{H}$,
where $\mathbb{H}$ is the upper-half plane, by a cocompact torsion
free lattice $\Gamma\subset Aut(\mathbb{H})\times Aut(\mathbb{H})$. In
other words, their universal cover is always $\mathbb{H}\times\mathbb{H}$.

The first class of fake quadrics consists of surfaces $X=\Gamma\backslash\mathbb{H}\times\mathbb{H}$
such that the group $\Gamma$ is reducible. By reducible we mean that
there exists a subgroup of finite index $\Gamma'=\Gamma_{1}\times\Gamma_{2}$
of $\Gamma$ such that the group $\Gamma_{i}$ acts on $\mathbb{H}$
and $C_{i}=\mathbb{H}/\Gamma_{i}$ is a smooth algebraic curve. This
case is now well understood and the full classification of these fake
quadrics, called \emph{surfaces isogenous to a higher product}, has
been achieved in \cite{bauercatanesegrunewald} by Bauer, Catanese
and Grunewald. In practice, this classification and construction is
done geometrically by classifying triples $(C_{1},C_{2},G)$ of two
smooth curves $C_{i}$ of general type and an automorphism group $G$,
such that $G$ acts freely on the surface $C_{1}\times C_{2}$ and
the quotient $(C_{1}\times C_{2})/G$ has the asked invariants.

In this paper we will focus on fake quadrics of the second class,
that we call \emph{quaternionic fake quadrics}. These fake quadrics
are quotients of $\HH\times\HH$ by cocompact irreducible lattices
$\Gamma$ in $Aut(\mathbb{H}) \times Aut(\mathbb{H})$. The lattice $\Gamma$
is then arithmetic by a theorem of Margulis and is defined by an indefinite
quaternion algebra over a totally real number field.

The first quaternionic fake quadrics have been constructed by Shavel
\cite{Shavel78} in 1978. We know that these surfaces are rigid and
thus that there are only a finite number of them, but at the moment
we do not have a complete list of all these surfaces. We have a list
of commensurability classes of fake quadrics defined by quaternion algebras
over quadratic fields \cite{dz11}.

The situation for quaternionic fake quadrics is very similar to the
case of fake projective planes which are surfaces of general type
with the same numerical invariants as the projective plane. Fake projective
planes are all quotients of the $2$-dimensional complex unit ball
$\mathbb{B}^{2}$ by cocompact arithmetic lattices $\Gamma\subset PU(2,1)$.
This provides an arithmetic construction of these surfaces, but it
is generally not easy to handle and construct these surfaces geometrically,
e.g. as a quotient or ramified cover of some known surfaces.

In order to remedy at this situation, in \cite{Keum11}, \cite{Keum},
\cite{Keum2}, Keum studied quotients $Z$ of fake projective planes
by groups of automorphisms. In this way, he obtained surfaces of general
type with geometric genus $p_{g}=0$ and was able to rebuild a fake
projective plane by only knowing the properties of the quotient surface
$Z$.

The aim of this paper is to study automorphisms of quaternionic fake
quadrics and the quotients of these surfaces by groups of automorphisms.
The computations in \cite{dz11} leads us to the conjecture that the
order of the automorphism group is less or equal $24$ (see Section
\ref{other}).

The first main result we obtain is the following: 
\begin{thm}
Let $X=\Gamma\backslash\HH\times\HH$ be a quaternionic fake quadric.
An automorphism of $X$ has only finitely many fixed points. There
exist fake quaternionic quadrics $X$ with automorphism group isomorphic
to\[
\mathbb{Z}/2\mathbb{Z},\,(\mathbb{Z}/2\mathbb{Z})^{2},\,\mathbb{D}_{4},\,\mathbb{D}_{6},\,\mathbb{D}_{8},\,\text{or}\ \mathbb{D}_{10},\]
 where $\mathbb{D}_{n}$ is the dihedral group with order $2n$.
\end{thm}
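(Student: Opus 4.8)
The plan is to realise the automorphism group as $\aut(X)=N(\Gamma)/\Gamma$, where $N(\Gamma)$ denotes the normalizer of $\Gamma$ inside the holomorphic automorphism group $G=(\psl_{2}(\RR)\times\psl_{2}(\RR))\rtimes\ZZ/2\ZZ$ of $\HH\times\HH$, the factor $\ZZ/2\ZZ$ being generated by the exchange of the two factors. Since $X$ is of general type, $\aut(X)$ is finite, so $\Gamma$ has finite index in $N(\Gamma)$ and every automorphism $\sigma$ lifts to some $\tilde{\sigma}\in N(\Gamma)$. A point of $\HH\times\HH$ maps to a fixed point of $\sigma$ precisely when it is fixed by $\gamma\tilde{\sigma}$ for some $\gamma\in\Gamma$, so the fixed locus of $\sigma$ is the image of $\bigcup_{\gamma\in\Gamma}\mathrm{Fix}(\gamma\tilde{\sigma})$. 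I would therefore analyse the fixed locus of a single element $g=\gamma\tilde{\sigma}$ of $N(\Gamma)$ according to whether it exchanges the two factors.

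First I would treat the elements $g=(h_{1},h_{2})$ preserving each factor. Such a $g$ fixes $(z_{1},z_{2})$ iff each $h_{i}$ fixes $z_{i}$, which forces each $h_{i}$ to be elliptic or trivial. Here the arithmetic of the defining division algebra $A$ over the real quadratic field $F$ enters: writing $g=(\phi_{1}(\alpha),\phi_{2}(\alpha))$ for the two real embeddings $\phi_{1},\phi_{2}$ of $A$, the component $\phi_{1}(\alpha)$ is trivial in $\psl_{2}(\RR)$ iff $\alpha$ lies in the centre $F$, and then $\phi_{2}(\alpha)$ is trivial as well. Hence for $g\neq 1$ neither component is trivial, so a fixed point forces both $h_{1},h_{2}$ to be elliptic with their unique fixed points, and $g$ fixes exactly one point of $\HH\times\HH$. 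This is the concrete form irreducibility of $\Gamma$ takes, and it shows every non-trivial factor-preserving element has at most one fixed point.

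The subtle case is an element $g$ exchanging the two factors, say $g\colon(z_{1},z_{2})\mapsto(az_{2},bz_{1})$: its square is the factor-preserving element $(ab,ba)$, and $g$ fixes $(z_{1},z_{2})$ iff $z_{1}$ is a fixed point of $ab$ and $z_{2}=bz_{1}$. When $ab$ is elliptic this is again a single point, but when $ab=1$ the fixed locus is the whole twisted diagonal $\{(z,bz):z\in\HH\}$, a totally geodesic curve descending to a one-dimensional component of $\mathrm{Fix}(\sigma)$. I expect this to be the main obstacle. To exclude it I would argue that such a fixed curve would be a compact Shimura curve inside $X$ whose stabiliser in $\Gamma$ is the cocompact group $\{\alpha\in\OO^{1}:\phi_{2}(\alpha)=b\phi_{1}(\alpha)b^{-1}\}$; the element $b$ then realises the nontrivial Galois automorphism of $F/\QQ$ by conjugation, and this stabiliser is the unit group of an order in a quaternion algebra defined over $\QQ$. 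I would then show that this is incompatible with $X$ being a quaternionic fake quadric, either because it contradicts the volume and arithmetic constraints forcing $c_{1}^{2}=8$ and $c_{2}=4$, or because it makes $\Gamma$ reducible, against the standing irreducibility hypothesis. Granting this, each exchanging element also has at most one fixed point, and since $\mathrm{Fix}(\sigma)$ is then a discrete subset of the compact surface $X$, it is finite.

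For the existence statement I would proceed by explicit construction, starting from the list of commensurability classes of quaternionic fake quadrics over real quadratic fields in \cite{dz11}. For suitable choices of $F$, of a maximal (or Eichler) order $\OO$ in the division algebra $A$, and of a torsion-free lattice $\Gamma\subset\OO^{1}$ giving a smooth fake quadric, I would compute the normalizer $N(\Gamma)$, generated by the units of $\OO$ modulo $F^{*}$, the Atkin--Lehner type elements attached to the ramified primes, and, where available, the factor-exchanging Galois element, and then determine the finite quotient $N(\Gamma)/\Gamma$. Verifying in each case that $\Gamma$ is torsion-free, so that $X$ is smooth, and that $N(\Gamma)/\Gamma$ is isomorphic to the prescribed group realises each of $\ZZ/2\ZZ$, $(\ZZ/2\ZZ)^{2}$, $\DD_{4}$, $\DD_{6}$, $\DD_{8}$ and $\DD_{10}$. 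This is the computational heart of the construction, and is where I would expect to invoke explicit calculations in the orders $\OO$.
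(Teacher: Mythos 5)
Your reduction to elements of the normalizer and your treatment of factor-preserving elements match the paper's argument: irreducibility forces both components of a nontrivial element to be simultaneously nontrivial, hence elliptic, giving a unique fixed point in $\HH\times\HH$, and discreteness together with $[N(\Gamma):\Gamma]<\infty$ gives finiteness. Your plan for the existence half is also the paper's strategy (congruence subgroups of norm-one groups of maximal orders, normalizers generated by units and Atkin--Lehner type elements, torsion criteria and volume formulas, local computations in $B_{\pP}=L_{\pP}\oplus\Pi L_{\pP}$ to get the dihedral relations), although you leave the actual examples --- which are the substance of that half of the theorem --- unexecuted.

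The genuine gap is in the step you yourself flag as the main obstacle: excluding a factor-exchanging involution whose fixed locus is a union of twisted diagonals. Neither of your two proposed exclusions works. The reducibility route fails because the stabiliser $\{\alpha\in\OO^{1}:\phi_{2}(\alpha)=b\phi_{1}(\alpha)b^{-1}\}$ being (commensurable with) the unit group of an order in a quaternion algebra over $\QQ$ does not contradict irreducibility of $\Gamma$: this situation occurs exactly when $B$ descends to $\QQ$, i.e. $B\cong B_{0}\otimes_{\QQ}k$, and in that case $\Gamma$ is still irreducible (it is not commensurable with any product of Fuchsian groups); irreducible compact quaternionic Shimura surfaces can and do contain such twisted-diagonal Shimura curves. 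The volume route is not an argument as stated: you would need an explicit mechanism by which $c_{1}^{2}=8$, $c_{2}=4$ forbids such a fixed curve, and you give none. The paper's mechanism is topological rather than arithmetic: since a fake quadric has $q=p_{g}=0$, hence $b_{1}=0$ and $b_{2}=2$, and any automorphism preserves the canonical class, the trace of $\sigma$ on $H^{1}(S,\Omega_{S})$ is $0$ or $2$, so the topological Lefschetz fixed point formula gives $e(\mathrm{Fix}(\sigma))\in\{2,4\}$ for every automorphism $\sigma$. On the other hand, each one-dimensional component of the fixed locus would be a smooth compact curve uniformized by the twisted diagonal $\cong\HH$ (the torsion-free cocompact $\Gamma$ acts freely on it), hence of genus at least $2$ and of negative Euler characteristic; a purely one-dimensional fixed locus would therefore force $e(\mathrm{Fix}(\sigma))<0$, a contradiction. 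Without this step, or a genuine substitute for it, your proof of the finiteness assertion is incomplete.
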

Let us remark that the knowledge of surfaces of general type with
$p_{g}=0$ and a large automorphism group can be interesting to check
wether the Bloch conjecture holds (see e.g.~\cite{Inose}). 

The second aim of this paper is to study the minimal desingularisation
of the quotient of a quaternionic fake quadric by a group of automorphisms,
in order to obtain new surfaces with $p_{g}=0$.
\begin{thm}
Let $X$ be a quaternionic fake quadric and $G$ a finite group of
automorphisms of $X$. The minimal desingularisation $Z$ of the quotient
$X/G$ has has the following numerical invariants:\\

\begin{tabular}{|c|c|c|c|c|c|}
\hline 
$G$  & $c_{1}^{2}(Z)$  & $c_{2}(Z)$  & Singularities on $X/G$  & Minimal  & $\kappa(Z)$\tabularnewline
\hline
\hline 
$\mathbb{Z}/2\mathbb{Z}$  & $4$  & $8$  & $4A_{1}$  & yes  & 2\tabularnewline
\hline 
$\mathbb{Z}/3\mathbb{Z}$  & $2$  & $10$  & $2A_{3,1}+2A_{2}$  & - & 2\tabularnewline
\hline 
$\mathbb{Z}/6\mathbb{Z}$  & $-4$  & $16$  & $2A_{6,1}+2A_{5}$  & no  & - \tabularnewline
\hline 
$\mathbb{Z}/8\mathbb{Z}$  & $-2$ & $14$ & $A_{8,3}+A_{8,5}$ & no & -\tabularnewline
\hline 
$\mathbb{Z}/10\mathbb{Z}$  & $-12$  & $24$  & $2A_{10,1}+2A_{9}$  & no  & - \tabularnewline
\hline 
$(\mathbb{Z}/2\mathbb{Z})^{2}$  & $2$  & $10$  & $6A_{1}$  & yes  & 2\tabularnewline
\hline 

$\mathbb{D}_{4}$ & $0$ & $12$ & $4A_{1}+A_{4,3}+A_{4,1}$ & no & $\geq1$\tabularnewline
\hline 
$\mathbb{D}_{8}$ & $-1$ & $13$ & $4A_{1}+A_{8,3}+A_{8,5}$ & no & -\tabularnewline
\hline
\end{tabular}\\

\noindent Here, $\kappa$ indicates the Kodaira dimension of the surface
$Z$. 
\end{thm}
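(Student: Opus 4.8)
The plan is to reduce every row to two inputs: the analytic type of each singular point of $X/G$ and the number of points of each type; all remaining columns then follow formally. First I record that cyclic quotient singularities are rational, so the minimal resolution $\rho\colon Z\to X/G$ satisfies $\chi(\OO_Z)=\chi(\OO_{X/G})$. Since $\pi\colon X\to X/G$ is finite we have $H^i(\OO_{X/G})=H^i(\OO_X)^G$, and as $q(X)=p_g(X)=0$ this gives $q(Z)=p_g(Z)=0$ and $\chi(\OO_Z)=1$; Noether's formula then forces $c_1^2(Z)+c_2(Z)=12$ in every case. Hence it suffices to compute $c_2(Z)$, the independent computation of $c_1^2(Z)$ below serving as a cross-check.

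To find the singularities, recall that each nontrivial $g\in G$ has only finitely many fixed points, so near such a point a lift of $g$ to $\HH\times\HH$ is a pair of elliptic elements acting by rotations, and the quotient is analytically a cyclic singularity $\tfrac1n(1,q)=A_{n,q}$ (with $A_n$ the rational double point $\tfrac1{n+1}(1,n)$). The pair $(n,q)$ is the pair of rotation numbers, which I would read off from the two eigenvalues of the finite-order quaternion unit representing $g$ under the two real embeddings of the defining real quadratic field, i.e.\ from its reduced trace $\Trd$. The number of points of each type is the number of $\Gamma$-conjugacy classes of the relevant elliptic elements, which I would pin down by combining an orbit count of the finite stabilizers with the two Lefschetz fixed-point formulas: the topological one recovers $|\mathrm{Fix}(g)|$ from the action of $g^\ast$ on $H^2(X)$ (here $b_2(X)=2$), while the holomorphic one, whose left-hand side equals $1$ because $G$ fixes $H^0(\OO_X)$ and $H^1=H^2=0$, constrains the admissible rotation numbers through $\sum_{p}\big((1-\zeta_1)(1-\zeta_2)\big)^{-1}=1$. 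This produces the singularity columns.

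Given the singularities, $c_2(Z)$ follows from the orbifold Euler number $e(X/G)=\tfrac1{|G|}\sum_{g\in G}e(X^g)$, with $e(X)=4$ and $e(X^g)=|\mathrm{Fix}(g)|$ for $g\neq1$, corrected by the resolution: a singularity whose Hirzebruch--Jung string has $k$ components raises $e$ by $k$. For $c_1^2(Z)$ I write $K_Z=\rho^\ast K_{X/G}+\sum_i a_iE_i$; since $\pi$ is \'etale in codimension one, $(K_{X/G})^2=K_X^2/|G|=8/|G|$, and the discrepancies $a_i\in(-1,0]$ are determined by $\rho^\ast K_{X/G}\cdot E_i=0$ and $K_Z\cdot E_i=-2-E_i^2$, giving $c_1^2(Z)=8/|G|+\sum_i a_i(-2-E_i^2)$. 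The $A_n$ strings are crepant ($a_i=0$) and contribute nothing, whereas the genuinely new singularities $A_{n,q}$ supply the negative terms that lower $c_1^2(Z)$; the outcome must match $12-c_2(Z)$ throughout.

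For minimality and Kodaira dimension I would argue as follows. When $X/G$ carries only rational double points ($G=\ZZ/2\ZZ$ and $(\ZZ/2\ZZ)^2$) the resolution is crepant, so $K_Z=\rho^\ast K_{X/G}$ is nef and big (because $K_X$, hence $K_{X/G}$, is ample), and $Z$ is minimal of general type with $\kappa(Z)=2$. More generally $c_1^2(Z)>0$ forces $K_Z$ big, since $K_Z\cdot\rho^\ast A=K_{X/G}\cdot A>0$ for $A$ ample on $X/G$ and $K_Z^2>0$; this yields $\kappa(Z)=2$ for $\ZZ/3\ZZ$, though the negative discrepancies of its non-canonical points leave minimality open. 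When $c_1^2(Z)<0$ the surface is not minimal, because a minimal surface with $\chi(\OO)=1$ and $q=0$ has $K^2\geq0$; there the discrepancies overwhelm the ample part and the numerics no longer fix $\kappa$. The borderline $\DD_4$, with $c_1^2(Z)=0$, I would settle by exhibiting an explicit $(-1)$-curve for non-minimality and computing a pluricanonical section directly to obtain $\kappa(Z)\geq1$. I expect the arithmetic input of the second step to be the main obstacle: converting finite-order elements of the quaternion order into the exact local invariants $(n,q)$ and counting their $\Gamma$-conjugacy classes is where all the number theory enters, and a slip there falsifies every column; a secondary difficulty is producing the $(-1)$-curve and the pluricanonical form for $\DD_4$, where numerics alone no longer decide the geometry.
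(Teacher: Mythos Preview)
Your overall framework---$q=p_g=0$ via $H^i(\OO_{X/G})=H^i(\OO_X)^G$, Noether forcing $c_1^2+c_2=12$, the orbifold Euler number for $c_2$, and Hirzebruch--Jung discrepancies for $c_1^2$---matches the paper exactly, as does the minimality/Kodaira-dimension bookkeeping for the $A_n$ and $c_1^2<0$ rows. The substantive divergence is in how the singularity types and their multiplicities are determined.

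You propose to read the local types $(n,q)$ off the arithmetic of the quaternion order (eigenvalues under the two real embeddings, reduced traces) and to count $\Gamma$-conjugacy classes of elliptic elements, identifying this number-theoretic step as ``the main obstacle''. The paper uses no such arithmetic input at all. It determines the singularity list for an \emph{arbitrary} quaternionic fake quadric carrying the given group purely from the two Lefschetz formulas together with integrality constraints, which you relegate to a cross-check. For $\ZZ/3\ZZ$, Zhang's formula gives $r_1+r_2=4$, and integrality of $K_Z^2=\tfrac{8-r_1}{3}$ forces $r_1=2$. For $\ZZ/10\ZZ$, summing the holomorphic Lefschetz formula over powers of $\sigma$ leaves three candidate lists, and all but $2A_{10,1}+2A_9$ are eliminated because $c_1^2+c_2$ fails to be divisible by $12$. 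For $\DD_4$, the case where the order-$4$ element has only two fixed points is ruled out because $e(S/G)$ comes out non-integral. For $\ZZ/8\ZZ$, the list is cut down by compatibility with the $\ZZ/4\ZZ$ quotient already computed. Your arithmetic route, besides being harder, would at best prove the table for the specific examples of Section~4 rather than the universal statement; note also that different fixed points of the same automorphism correspond to different elements of the coset $\gamma\Gamma$, so a single reduced trace does not determine all the local types.

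For the Kodaira-dimension column the paper likewise avoids any explicit construction. A uniform Riemann--Roch argument shows $h^0(-mK_Z)=0$ (since $(-mK_Z)\cdot\rho^*K_{X/G}<0$) and hence $h^0(mK_Z)\geq 1+\binom{m}{2}K_Z^2$ for $m\geq 2$; this gives $\kappa=2$ when $K_Z^2>0$ and, after excluding the rational, ruled and Enriques cases directly, $\kappa\geq 1$ when $K_Z^2=0$. No explicit $(-1)$-curve or pluricanonical section is exhibited for $\DD_4$.
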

We obtain also results and restrictions for the groups $\mathbb{Z}/4\mathbb{Z}$,
$\mathbb{Z}/5\mathbb{Z}$ and $\mathbb{D}_{3}$. We note that the
surfaces general type we obtain have vanishing geometric genus. The
classification of surfaces with $p_{g}=0$ is not established and
we intend to compute the fundamental groups of our examples in a forthcoming
paper.

A curve $C$ on a surface is called nodal if $C\simeq\mathbb{P}^{1}$
and $C^{2}=-2$. A nodal curve is the resolution of a nodal singularity.
The surfaces $Z$ we obtain as quotient of a fake quadric by an automorphism
group $(\mathbb{Z}/2\mathbb{Z})^{n},\, n\in\{1,2\}$ have the maximum
number of nodal curves (so-called Miyaoka bound, see \cite{Miyaoka}). If minimal,
the surfaces obtained by quotient by the groups $\mathbb{Z}/3\mathbb{Z}$
and $\mathbb{D}_{3}$ have also the maximum number of quotient singularities.
As Keum did with fake planes, we can reverse the construction: 
\begin{prop}
Let $Z$ be a smooth minimal surface of general type with $q=p_{g}=0$.
\\
 a) Suppose that $c_{1}^{2}=4,\,2$, $Pic(Z)$ has no $2$-torsion,
and that there is a birational map $Z\to Y$ onto a surface containing
$8-c_{1}^{2}$ nodal singularities $A_{1}$. There exists a smooth
minimal surface of general type $S$ with invariants $c_{1}^{2}=2c_{2}=8$
and a $(\mathbb{Z}/2\mathbb{Z})^{m}$-cover $S\to Y$ ramified over
the nodes, with $m$ such that $2^{m}=\frac{8}{c_{1}^{2}}$.\\
 b) Suppose that $c_{1}^{2}=2$, $Pic(Z)$ has no $3$-torsion,
and that there is a birational map $Z\to Y$ onto a surface with $2A_{3,1}+2A_{2}$
singularities. There exist a smooth surface $S$ with invariants $c_{1}^{2}=2c_{2}=8$
and a $(\mathbb{Z}/3\mathbb{Z})$-cover $Z\to Y$ ramified over the
singularities of $Y$. 
\end{prop}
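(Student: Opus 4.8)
The plan is to realise $S$ as a finite Galois cover $\pi\colon S\to Y$ with group $G$ — with $G=\ZZ/2\ZZ$ or $(\ZZ/2\ZZ)^2$ in case a) and $G=\ZZ/3\ZZ$ in case b) — that is étale over the smooth locus $Y^{\circ}=Y\setminus\mathrm{Sing}(Y)$ and whose inertia $I_p\subseteq G$ at each singular point $p$ maps isomorphically onto the local fundamental group $G_p$ (so $G_p=\ZZ/2\ZZ$ at a node and $G_p=\ZZ/3\ZZ$ at an $A_{3,1}$ or $A_2$ point), acting on the local uniformising $\CC^2$ exactly as the group defining the singularity. With this prescription, étale-locally over each $p$ the map $\pi$ is a disjoint union of copies of the quotient $\CC^2\to\CC^2/G_p$; since these have smooth source, $S$ is automatically smooth and $\pi$ is unramified in codimension one, whence $K_S=\pi^*K_Y$.

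The first and hardest step is to construct such a $\pi$. I would reduce this to producing a torsion class $\delta$ in the Weil class group $\mathrm{Cl}(Y)=\mathrm{Pic}(Z)/\langle\text{exceptional curves}\rangle$ — equivalently a character of the fundamental group of $Y^{\circ}$ — whose restriction to the local class group $G_p$ at every singular point is a generator. In case a) with $m=1$ this is exactly the assertion that the four nodes form an \emph{even set}, i.e. $\tfrac{1}{2}(E_1+\dots+E_4)\in\mathrm{Pic}(Z)$ for the exceptional $(-2)$-curves $E_i$; for $m=2$ I would split the six nodes into three pairs $D_1,D_2,D_3$, one per involution of $(\ZZ/2\ZZ)^2$, and impose Pardini's bidouble data $2L_k\equiv D_i+D_j$; in case b) one imposes the analogous third-root data distributed over the two $A_{3,1}$ and two $A_2$ points. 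The no-$2$-torsion (resp. no-$3$-torsion) hypothesis is used through the Kummer sequence, which under that hypothesis gives an isomorphism $\mathrm{Pic}(Z)/2\cong H^2(Z,\ZZ/2\ZZ)$ (resp. mod $3$); this guarantees that the required root, \emph{once it exists}, is unique and that the cover is connected. Existence of the even/divisible set is the genuine obstacle: it is not formal and should be read off from the arithmetic of the intersection lattice of $Z$, where the decisive input is that the $(-2)$-curves realise the maximal (Miyaoka) configuration, forcing the needed relation in $\mathrm{Cl}(Y)$.

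Granting $\pi$, the remainder is computational. As $\pi$ is étale in codimension one of degree $d=|G|$ with $K_S=\pi^*K_Y$, the Chern numbers follow from $K_S^2=d\,K_Y^2$ and $c_2(S)=e(S)=d\,e_{\mathrm{orb}}(Y)$, where $e_{\mathrm{orb}}(Y)=e(Y)-\sum_p\bigl(1-\tfrac{1}{|G_p|}\bigr)$ and $e(Y)=4$ in all cases. In case a) the singularities are canonical, so $K_Y^2=c_1^2(Z)$ and $e_{\mathrm{orb}}(Y)=\tfrac{1}{2}c_1^2(Z)$; since $2^m=8/c_1^2(Z)$ this gives $K_S^2=8$ and $c_2(S)=4$. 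In case b) the two $A_{3,1}$ points yield $\pi^*K_Y=K_Z+\tfrac13(C_1+C_2)$ for the two $(-3)$-curves $C_1,C_2$, hence $K_Y^2=c_1^2(Z)+\tfrac23=\tfrac83$ and $e_{\mathrm{orb}}(Y)=\tfrac43$, so again $K_S^2=3\cdot\tfrac83=8$ and $c_2(S)=3\cdot\tfrac43=4$; thus $c_1^2=2c_2=8$ throughout. Finally $K_Y$ is ample — it is the canonical model of $Z$ in case a), and is nef, big and strictly positive on every non-contracted curve in case b) — so $K_S=\pi^*K_Y$ is ample and $S$ is a smooth minimal surface of general type; the vanishing of $h^1$ on each eigensheaf of $\pi_*\OO_S$ then gives $q=p_g=0$, exhibiting $S$ as a fake quadric.
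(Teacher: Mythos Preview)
Your framework is correct and your invariant calculations via $K_S=\pi^*K_Y$ and the orbifold Euler number are clean and agree with the paper.  But the proposal has a real gap at exactly the point you flag as ``the genuine obstacle'': you never \emph{prove} that the required divisibility relation in $\mathrm{Pic}(Z)$ exists, only that it would have to be read off from the lattice.  The paper's argument here is short and specific, and you should supply it.  In case a) one considers the map $\psi\colon\FF_2^{\,k}\to\mathrm{Pic}(Z)\otimes\FF_2$, $v\mapsto\sum v_iC_i$.  The hypothesis $_2\mathrm{Pic}(Z)=0$ makes $\mathrm{Pic}(Z)\otimes\FF_2$ a space of dimension $h^{1,1}=b_2$ carrying a \emph{non-degenerate} $\FF_2$-valued intersection form (since $p_g=0$).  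The image of $\psi$ is totally isotropic (each $C_i$ is a $(-2)$-curve), hence has dimension at most $\lfloor b_2/2\rfloor=3$ or $4$ when $c_1^2=4$ or $2$; since $k=4$ or $6$, this forces $\dim\ker\psi\ge 1$ or $2$.  One still has to check that all curves appear in $\ker\psi$ and that $\dim\ker\psi$ is not larger: this uses that any nonzero $v\in\ker\psi$ has weight divisible by $4$ (a double cover branched on nodes needs $4\mid n$), together with Beauville's bound $k\ge 2^r-1$ for weight-$4$ binary codes.  This pins $\ker\psi$ down exactly and feeds directly into the Dolgachev--Mendes Lopes--Pardini construction of the $(\ZZ/2\ZZ)^m$-cover.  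Your appeal to ``the Miyaoka maximum'' is not by itself an argument; the isotropy/dimension count is what actually does the work.

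In case b) there is a further idea you are missing.  The two singularity types $A_{3,1}$ and $A_2$ are resolved by curves of different self-intersection, so there is no uniform ``mod $3$'' divisibility condition on $Z$ itself.  The paper first \emph{blows up} the two nodes of the $A_2$-chains; on the resulting surface $W$ all six strict transforms $C_1,\dots,C_6$ are disjoint $(-3)$-curves, and the same totally-isotropic argument applied to $\psi\colon\FF_3^{\,6}\to\mathrm{Pic}(W)\otimes\FF_3$ (with $b_2(W)=10$) forces $\ker\psi\neq 0$.  Intersection of a putative branch divisor with the two new exceptional $(-1)$-curves then shows that \emph{all six} $C_i$ must occur, i.e.\ $r=6$; the triple cover is built from this data and one contracts back the eight exceptional curves over $W$ to land on the fake quadric.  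Without this blow-up step your ``analogous third-root data'' cannot be made precise.
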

The proof of part $a)$ of this Proposition uses mainly the results
of Dolgachev, Mendes Lopes, Pardini (\cite{Dolgachev}) and illustrates
their theory. The proof of part b) is more original because it mixes
two types of singularities.

The paper is structured as follows: we begin to recall the known facts
on quaternionic fake quadrics, and on quotients of surfaces. We then
provide examples of fake quadrics having a large group of automorphisms,
compute the quotients surfaces and then reverse the construction on
the opposite direction : starting with a surface with the same invariants
as the quotient, we construct a surface with $c_{1}^{2}=2c_{2}=8$.

\textbf{Acknowledgements}. The authors warmly thank Fabrizio Catanese and Miles Reid for pointing out an
error in a previous version and H\aa{}kan Granath for his help to
correct it. We thank also Ingrid Bauer, Margarida Mendes Lopes and Rita Pardini
for many useful discussions. Part of this research was done during the
second author stay in Strasbourg University and in the Instituto Superior
Technico under grant FCT SFRH/BPD/72719/2010 and project Geometria
Algebrica PTDC/MAT/099275 /2008.

\global\long\global\long\def\thethm{\thesection.\arabic{thm}}

\section{Generalities on quaternionic fake quadrics}

\label{sec:generalities}Let us give a more detailed description of
quaternionic fake quadrics. First, recall that a lattice $\Gamma<\psl_{2}(\RR)\times\psl_{2}(\RR)\cong\aut\HH\times\aut\HH$
is irreducible if it is not commensurable with a product $\Gamma_{1}\times\Gamma_{2}$
of two discrete subgroups $\Gamma_{1},\Gamma_{2}\subset\psl_{2}(\RR)$.
Equivalently, the image of $\Gamma$ under the projection onto one
of the factors $\psl_{2}(\RR)$ is a dense subgroup of $\psl_{2}(\RR)$.
By a famous result of Margulis, an irreducible lattice $\Gamma$ in
$\psl_{2}(\RR)\times\psl_{2}(\RR)$ is an arithmetic group, and can
therefore be described in the following way:

There exists a totally real number field $k$ of degree $g=[k:\QQ]\geq2$
and a quaternion algebra $B=(\alpha,\beta)_{k}:=\langle1,i,j,ij\rangle_{k}$
with $i^{2}=\alpha\in k,j^{2}=\beta\in k,ij=-ji$, over $k$ such
that \begin{equation}
B\otimes_{\QQ}\RR=\prod_{\rho\in\Hom(k,\RR)}B^{\rho}\cong M_{2}(\RR)\times M_{2}(\RR)\times\underbrace{H_{\RR}\times\ldots\times H_{\RR}}_{g-2}.\label{eqquat}\end{equation}
 Here, $B^{\rho}=(\alpha^{\rho},\beta^{\rho})_{\RR}$ and $H_{\RR}=(-1,-1)_{\RR}$
denotes the skew field of Hamilton quaternions. Let $\oo_{k}$ be
the ring of integers of $k$ and $\OO$ a maximal order in $B$, i.e.
a subring of $B$ which is a full $\oo_{k}$-lattice in $B$. Finally,
let $\OO^{1}$ be the subgroup of all elements in $\OO$ of reduced
norm one. \\
 The isomorphism (\ref{eqquat}) induces an embedding of $\OO^{1}$
into $\Sl_{2}(\RR)\times\Sl_{2}(\RR)$ by taking the element $\gamma\in\OO^{1}$
to the pair $(\gamma^{\rho_{1}},\gamma^{\rho_{2}})\in\Sl_{2}(\RR)\times\Sl_{2}(\RR)$,
where $\gamma^{\rho_{i}}$ is the image of $\gamma$ in $B^{\rho_{i}}$.
The group $\OO^{1}$ then acts on $\HH\times\HH$ as a group of
fractional linear transformations. Namely, if $(z,w)\in\HH\times\HH$
is a point and an element $\gamma\in\OO^{1}$ is identified with
two matrices $\gamma^{\rho_{1}}$ and $\gamma^{\rho_{2}}\in\Sl_{2}(\RR)$,
then \[
\gamma(z,w)=(\gamma^{\rho_{1}}z,\gamma^{\rho_{2}}w).\]
 After dividing out by the ineffective kernel, one considers the group
\[
\Gamma_{\OO}^{1}=\OO^{1}/\{\pm1\}\subset\psl_{2}(\RR)\times\psl_{2}(\RR)\]
 and it can be proven that $\Gamma_{\OO}^{1}$ is an irreducible lattice
in $\psl_{2}(\RR)\times\psl_{2}(\RR)$ (see \cite{MatsushimaShimura}).
In general we say that a subgroup $\Gamma\subset\psl_{2}(\RR)\times\psl_{2}(\RR)$
is an \emph{arithmetic lattice} if there exists $k,B,\rho_{1},\rho_{2},\OO$
as above such that $\Gamma$ is commensurable with $\Gamma_{\OO}^{1}$.

Let $\Gamma$ be irreducible and $X_{\Gamma}:=\Gamma\backslash\HH\times\HH$
be the orbit space of the discontinuous action of $\Gamma$ on $\HH\times\HH$.
Then, there is a natural structure of compact algebraic surface on
$X_{\Gamma}$ and $X_{\Gamma}$ is smooth if and only if $\Gamma$
is torsion free. The numerical invariants of a smooth $X_{\Gamma}$
are computed in \cite{MatsushimaShimura}, see also \cite{Shavel78}.
It follows that $X_{\Gamma}$ is a fake quadric if and only if $c_{2}(X_{\Gamma})=4$
(see \cite{Shavel78}).

\section{Generalities on quotients of a surface}

In this section we recall results from the theory of singularities
and on the resolution of the quotient of a surface by a finite group. The
main reference for these topics is \cite{Barth}, see also \cite{Roulleau11}.

Let us denote by $G$ an automorphism group acting on $S$, by $X=S/G$
the quotient surface and by $\pi:Z\to S/G$ the minimal desingularisation
map. 
\begin{prop}
{[}Topological Lefschetz formula{]} Let $\sigma$ be an automorphism
acting on $S$ and $S^{\sigma}$ the fixed point set of $\sigma$.
We have \[
e(S^{\sigma})=\sum_{j=0}^{j=4}(-1)^{j}Tr(\sigma|H^{i}(S,\mathbb{Z})_{mt})\]
 where $H^{i}(S,\mathbb{Z})_{mt}$ is the group $H^{i}(S,\mathbb{Z})$
modulo torsion. 
\end{prop}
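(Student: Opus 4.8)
The plan is to identify the alternating sum on the right-hand side with the classical Lefschetz number $L(\sigma)=\sum_{j}(-1)^{j}\mathrm{Tr}(\sigma^{*}\mid H^{j}(S,\QQ))$ and then to evaluate this number geometrically through the fixed-point components. First I would observe that the torsion plays no role: since $H^{i}(S,\ZZ)_{mt}\otimes\QQ\cong H^{i}(S,\QQ)$, the trace of $\sigma$ on integral cohomology modulo torsion coincides with its trace on rational cohomology, so the stated right-hand side is precisely $L(\sigma)$.

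Next, since $S$ is a surface of general type its automorphism group is finite, so $\sigma$ generates a finite cyclic group. Choosing a $\sigma$-invariant Hermitian metric and linearising the action near the fixed points, the fixed locus $S^{\sigma}$ is a smooth complex submanifold, a disjoint union of isolated points and smooth curves. On the normal bundle $N_{F}$ of each connected component $F$ the differential $d\sigma$ acts with eigenvalues that are roots of unity different from $1$ (otherwise the fixed locus would be strictly larger), so $\det(I-d\sigma\mid N_{F})\neq0$; the fixed-point set is therefore \emph{clean} in the sense required for the general Lefschetz formula.

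Then I would invoke the Lefschetz fixed-point theorem for clean fixed loci, which expresses $L(\sigma)$ as a sum of local contributions, one for each connected component $F$ of $S^{\sigma}$. The key point is that each such contribution equals $e(F)$: because $\sigma$ is holomorphic, the real determinant $\det_{\RR}(I-d\sigma\mid N_{F})=|\det_{\CC}(I-d\sigma\mid N_{F})|^{2}>0$ is positive, so the orientation sign is $+1$ and the normal contribution reduces to the Euler characteristic of $F$. Summing over components yields $L(\sigma)=\sum_{F}e(F)=e(S^{\sigma})$, which is the assertion. In particular this already shows that an automorphism with only isolated fixed points contributes $+1$ at each of them.

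The main obstacle is the justification of the contribution $e(F)$ for the positive-dimensional (curve) components, rather than the elementary isolated-point case. This requires the clean-intersection version of the fixed-point formula, equivalently an Atiyah--Bott-type argument, together with the verification of nondegeneracy of the normal action; once cleanness and the positivity of the normal determinant are established, the reduction to $e(S^{\sigma})$ is formal.
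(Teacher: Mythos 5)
The paper offers no proof of this proposition: it sits in the section of recalled generalities on quotients, quoted as the classical topological Lefschetz fixed point formula with the book of Barth--Hulek--Peters--Van de Ven as the background reference, so your write-up supplies an argument where the authors give only a citation. Judged on its own, your proof is correct and is the standard differential-geometric one: the passage from $H^{i}(S,\mathbb{Z})_{mt}$ to $H^{i}(S,\mathbb{Q})$ is justified by universal coefficients; the finiteness of the order of $\sigma$ (automatic here, both because $S$ is of general type and because in the paper $\sigma$ lies in a finite group $G$ by which one quotients) lets you linearize and conclude that $S^{\sigma}$ is a disjoint union of points and smooth curves; cleanness holds because the normal eigenvalues are roots of unity distinct from $1$; and the sign in the clean Lefschetz formula is $+1$ since $\det_{\mathbb{R}}(I-d\sigma|N_{F})=|\det_{\mathbb{C}}(I-d\sigma|N_{F})|^{2}>0$. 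Two comments. First, the finite-order hypothesis you secure is essential, not cosmetic: for an automorphism of infinite order the statement can fail (the parabolic automorphism $z\mapsto z+1$ of $\mathbb{P}^{1}$ has Lefschetz number $2$ but a single, degenerate, fixed point), so it is good that your argument pins this down rather than treating arbitrary $\sigma$ as the proposition's wording might suggest. Second, if you want to avoid invoking the clean (Atiyah--Bott type) fixed point theorem --- the one step you rightly flag as the main obstacle --- there is a more elementary classical route: a finite-order homeomorphism of a compact polyhedron admits an equivariant triangulation, for which the simplicial trace computation gives $L(\sigma)=e(S^{\sigma})$ directly; this is the statement $L(g)=\chi(M^{g})$, valid for any compact group action (see tom Dieck, \emph{Transformation Groups}), and it bypasses all metric and nondegeneracy considerations.
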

Note that for a fake quadric $S$ we have $q=p_{g}=0$, thus \[
H^{1}(S,\mathbb{Z})_{mt}=\{0\},\, H^{2}(S,\mathbb{Z})\otimes\mathbb{C}=H^{1}(S,\Omega_{S}).\]
 
\begin{cor}
\label{cor:lefschetz top} Let $S$ be a fake quadric and $\sigma$
an automorphism of order $n>1$ acting on $S$. We have $e(S^{\sigma})=2$
or $4$. If $\sigma=\tau^{2}$ for an automorphism $\tau$ (e.g. if
$n$ is prime to $2$), we have $e(S^{\sigma})=4$. \end{cor}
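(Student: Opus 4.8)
The plan is to derive the corollary from the Topological Lefschetz formula applied to the eigenvalue decomposition of $\sigma$ acting on the one nontrivial cohomology group. Since $S$ is a fake quadric, we have $q=p_g=0$, so $b_1=0$ and the only cohomology contributing torsion-free classes in odd degree vanishes; concretely $H^1(S,\ZZ)_{mt}=H^3(S,\ZZ)_{mt}=0$. Thus the alternating trace sum collapses to the even-degree terms: $e(S^\sigma)=\Tr(\sigma|H^0)+\Tr(\sigma|H^2_{mt})+\Tr(\sigma|H^4)$. On $H^0$ and $H^4$ the automorphism acts trivially (it preserves the orientation and the connected component), contributing $1+1=2$. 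Hence the whole computation reduces to understanding $\Tr(\sigma|H^2(S,\ZZ)_{mt})$.

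The key numerical input is that for a fake quadric $c_2=4$, so the total Betti number is $b_0+b_1+b_2+b_3+b_4 = 1+0+b_2+0+1=4$, forcing $b_2=2$. Therefore $H^2(S,\ZZ)_{mt}$ has rank $2$, and $\sigma$ acts on this rank-$2$ lattice as an element of $\Gl_2(\ZZ)$ of finite order $n>1$. The heart of the argument is then a finite-order-matrix computation: I would note that $\sigma$ preserves the cup-product intersection form, which for a fake quadric (signature of a surface with $c_1^2=8$, $c_2=4$, so $b_2^+=1$, $b_2^-=1$ by the index theorem) is an indefinite form of signature $(1,1)$. A finite-order element of the orthogonal group of such a rank-$2$ form is quite constrained: its eigenvalues are roots of unity that are algebraic integers, their product is $\pm1$ (the determinant), and the trace is an ordinary integer lying in $\{-2,-1,0,1,2\}$. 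The possibilities give $\Tr(\sigma|H^2_{mt})\in\{2,0,-2,\dots\}$, and combined with $e(S^\sigma)=2+\Tr(\sigma|H^2_{mt})$ together with the requirement that $e(S^\sigma)$ be a nonnegative integer (it counts a finite fixed-point set by the first main theorem, so $e(S^\sigma)\geq 0$), I would pin down $e(S^\sigma)\in\{2,4\}$. The case $\Tr=2$ yields $e=4$ and $\Tr=0$ yields $e=2$; I would rule out the negative-trace values by showing they force a negative Euler number or are incompatible with the fixed set being finite and nonempty.

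For the second assertion, suppose $\sigma=\tau^2$. Then on $H^2_{mt}\otimes\CC$ the eigenvalues of $\sigma$ are the squares of the eigenvalues of $\tau$, hence are themselves squares of roots of unity; in the rank-$2$, signature-$(1,1)$ setting the eigenvalues of $\sigma$ are real (a finite-order isometry of a hyperbolic plane acts with real eigenvalues $\lambda,\lambda^{-1}$), so they are $\pm1$, and being squares they must both equal $+1$. This forces $\Tr(\sigma|H^2_{mt})=2$ and therefore $e(S^\sigma)=4$. The parenthetical case $n$ odd follows because then $\sigma=\sigma^{(n+1)/2}\cdot\sigma^{(n+1)/2}=(\sigma^{(n+1)/2})^2$ is a square of a power of itself.

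The main obstacle I anticipate is the careful classification of finite-order elements of the isometry group of the rank-$2$ indefinite lattice and the exclusion of the negative trace values: I must justify that the fixed set being finite forces $e(S^\sigma)\geq0$ rather than merely being an integer, and confirm that the action on $H^0$ and $H^4$ is genuinely trivial (orientation preservation for a holomorphic automorphism is automatic, so this is routine, but I would state it explicitly). Once the eigenvalues are confined to roots of unity compatible with a signature-$(1,1)$ quadratic form, the remaining bookkeeping is a short finite check.
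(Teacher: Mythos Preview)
Your overall strategy --- reducing to the action on the rank-$2$ lattice $H^2(S,\ZZ)_{mt}$ and constraining the eigenvalues of a finite-order isometry --- is the same as the paper's. The signature-$(1,1)$ observation does correctly force the eigenvalues to lie in $\{\pm 1\}$, so the trace on $H^2$ is in $\{2,0,-2\}$ and $e(S^\sigma)\in\{4,2,0\}$.

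The gap is in excluding trace $-2$, i.e.\ $e(S^\sigma)=0$. Your proposed mechanism --- nonnegativity of $e(S^\sigma)$ because the fixed set is finite --- does not work: an empty fixed set is finite with Euler number $0$, and you have not established that the fixed set is nonempty (a priori $\sigma$ could act freely). So nothing you have written rules out $\sigma$ acting as $-I$ on $H^2$. The paper handles this with a one-line observation you have overlooked: any automorphism satisfies $\sigma^*K_S=K_S$, and since $K_S^2=8\neq 0$ the canonical class is a nonzero $(+1)$-eigenvector in $H^{1,1}=H^2(S,\ZZ)_{mt}\otimes\CC$. One eigenvalue is therefore forced to be $+1$, the trace is $1+(\pm1)\in\{2,0\}$, and the case $-2$ never occurs. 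The same remark also streamlines your second assertion: with one eigenvalue already equal to $+1$, a nontrivial action has the other equal to $-1$, so $\sigma^2$ always acts trivially on $H^2$; hence whenever $\sigma=\tau^2$ the trace is $2$ and $e(S^\sigma)=4$, with no need for the separate hyperbolic-plane argument.
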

\begin{proof}
For a fake quadric, the space $H^{1}(S,\Omega_{S})$ is $2$-dimensional
and is generated by the classes of $2$ curves in the Néron-Severi
group. As an automorphism preserves the canonical divisor, the invariant
subspace of $H^{1}(S,\Omega_{S})$ is at least $1$ dimensional. Therefore
the trace of $\sigma$ on $H^{1}(S,\Omega_{S})$ is $2$ or $0$.
If we suppose that this action is not trivial, then $2$ divides the
order of $\sigma$, moreover we see that the action of $\sigma^{2}$
is always trivial.
\end{proof}
Let $\xi$ be a primitive $n^{th}$-root of unity. Let us recall that
for $1\leq q\leq n-1$ coprime to $n$, the quotient of $\mathbb{C}^{2}$
by the action of \[
(x,y)\to(\xi x,\xi^{q}y)\]
 has a unique singularity, called a $A_{n,q}$ singularity. For $n,m>0$
two numbers, we denote $[n,m]=n-\frac{1}{m}$. A $A_{n,q}$ singularity
is resolved by a chain of smooth rational curves $C_{1},\dots,C_{k}$
such that $C_{i}$ cuts $C_{i\pm1}$ for $2\leq i\leq k-1$ and $C_{i}^{2}=-n_{i}$
for integers $n_{i}\geq2$ determined by the relation: \[
\frac{n}{q}=[n_{1},[n_{2},\dots,[n_{k-1},n_{k}]]\dots].\]
 We denote classically $A_{n,n-1}=A_{n-1}$.

Let $S$ be a surface with $p_{g}=q=0$ and let $\sigma$ be an order
$n\geq2$ automorphism such that the fixed points of the $\sigma^{k},\, k=1,\dots,n-1$
are isolated. 
\begin{prop}
\label{pro:Lefschetz formula}(Holomorphic Lefschetz fixed point formula,
\cite{Atiyah} p. 567). Let $S^{\sigma}$ be the fixed point set of
$\sigma$. Then \[
1=\sum_{s\in S^{\sigma}}\frac{1}{\det(1-d\sigma|T_{S,s})},\]
 where $d\sigma_{s}\mid T_{S,s}$ denotes the action of $\sigma$
on the tangent space $T_{S,s}$. 
\end{prop}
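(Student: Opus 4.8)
The statement is an instance of the Atiyah--Bott holomorphic Lefschetz fixed point formula applied to the structure sheaf $\OO_S$, so the plan is to reduce the general theorem to the situation at hand rather than reprove it from scratch. First I would recall the general formula in the form cited in \cite{Atiyah}: for a holomorphic automorphism $\sigma$ of a compact complex manifold $S$ having only simple (nondegenerate) fixed points, the holomorphic Lefschetz number
\[
L(\sigma,\OO_S)=\sum_{i\ge 0}(-1)^i\operatorname{Tr}\!\big(\sigma^*\mid H^i(S,\OO_S)\big)
\]
equals $\sum_{s\in S^{\sigma}}\det(1-d\sigma\mid T_{S,s})^{-1}$. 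The content of the proposition is then only to evaluate the left-hand side and to check that the hypotheses of the general theorem are met in our setting.

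Second, I would compute $L(\sigma,\OO_S)$ using the assumption $q=p_g=0$. Since $S$ is a (connected) surface, Hodge theory gives $h^1(\OO_S)=q=0$ and $h^2(\OO_S)=p_g=0$, so the only surviving term is $H^0(S,\OO_S)=\CC$. As $S$ is compact and connected, its global holomorphic functions are the constants, on which any automorphism acts as the identity; hence $\operatorname{Tr}(\sigma^*\mid H^0)=1$ and $L(\sigma,\OO_S)=1$. This is precisely the left-hand side of the asserted identity.

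Third, I would verify that every fixed point is simple, so that the elementary form of the formula (with the contributions $\det(1-d\sigma\mid T_{S,s})^{-1}$) applies. Because $\sigma$ has finite order, the action can be linearized near a fixed point $s$ (Cartan), so that $d\sigma$ on $T_{S,s}$ is a finite-order, hence diagonalizable, map whose eigenvalues are roots of unity and whose fixed locus near $s$ has tangent space $\ker\!\big(d\sigma\mid T_{S,s}-\mathrm{id}\big)$. The hypothesis that the fixed points of $\sigma$ are isolated then forces $1$ not to be an eigenvalue of $d\sigma$ at $s$, i.e.\ $\det(1-d\sigma\mid T_{S,s})\neq 0$, which is exactly the nondegeneracy required; this is also what makes each fixed point give rise to a cyclic quotient singularity of the type $A_{n,q}$ discussed above.

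I expect the only genuine point requiring care is the identification of the $\sigma$-action on $H^0(S,\OO_S)$ together with the vanishing of the higher terms, which is where $q=p_g=0$ enters decisively; everything else is either a direct citation of the Atiyah--Bott formula or the short linear-algebra observation that isolated fixed points of a finite-order automorphism are automatically simple. Should one wish to be self-contained, the harder step would be to reduce the Atiyah--Singer index theorem for the $\sigma$-equivariant Dolbeault complex to the stated simple-fixed-point form, but since the proposition invokes \cite{Atiyah} I would take the general formula as given and supply only the two verifications above.
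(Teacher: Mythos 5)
Your proposal is correct and takes the same route as the paper, which gives no proof beyond the citation to Atiyah--Singer: the result is the general holomorphic Lefschetz formula for $\mathcal{O}_S$, with the left-hand side equal to $1$ because $q=p_g=0$ kills $H^1(S,\mathcal{O}_S)$ and $H^2(S,\mathcal{O}_S)$ while $\sigma^*$ is the identity on $H^0(S,\mathcal{O}_S)=\mathbb{C}$. Your additional verification that isolated fixed points of a finite-order automorphism are automatically nondegenerate (linearize by Cartan, note $1$ cannot be an eigenvalue of $d\sigma$ at an isolated fixed point) is exactly the implicit hypothesis check the paper leaves to the reader, and it is carried out correctly.
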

Suppose moreover that the automorphism $\sigma$ has prime order $p$.
Let $\xi$ be a primitive $p^{th}$-root of unity. Let $r_{i}$ be
the number of isolated fixed points of $\sigma$ whose image in $S/\sigma$
are $A_{p,i}$ singularities. 
\begin{prop}
\label{prop Zhang formula}(Zhang's formula, \cite{Zhang} Lemma 1.6).
We have:\[
\sum_{i=1}^{i=p-1}r_{i}a_{i}(p)=1\]
 where \[
a_{i}(p)=\frac{1}{p-1}\sum_{j=1}^{j=p-1}\frac{1}{(1-\xi^{j})(1-\xi^{ij})}\]
 In particular, we have :\[
a_{1}(p)=\frac{5-p}{12},\, a_{2}(p)=\frac{11-p}{24},\, a_{3}(5)=\frac{1}{4},\, a_{4}(5)=\frac{1}{2}.\]

\end{prop}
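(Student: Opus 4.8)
The plan is to derive the identity $\sum_i r_i a_i(p)=1$ by applying the holomorphic Lefschetz fixed point formula (Proposition \ref{pro:Lefschetz formula}) not only to $\sigma$ but to all of its powers $\sigma^{j}$, $j=1,\dots,p-1$, and then averaging. Since $p$ is prime each $\sigma^{j}$ generates $\langle\sigma\rangle$, so its nontrivial powers are among the $\sigma^{k}$ and thus have isolated fixed points; hence Proposition \ref{pro:Lefschetz formula} applies verbatim to every $\sigma^{j}$, and because $p_{g}=q=0$ its left-hand side is $1$ in each case. At a fixed point $s$ I write the eigenvalues of $d\sigma$ on $T_{S,s}$ as $\xi^{a_{s}},\xi^{b_{s}}$; since the fixed point is isolated neither exponent vanishes, so $a_{s},b_{s}\in\{1,\dots,p-1\}$, and $\det(1-d\sigma^{j}|_{T_{S,s}})=(1-\xi^{ja_{s}})(1-\xi^{jb_{s}})$.

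First I would pin down the dictionary between eigenvalues and singularity type: the image of $s$ in $S/\sigma$ is an $A_{p,i}$ singularity precisely when $b_{s}\equiv i\,a_{s}\pmod p$, because rescaling the primitive root $\xi\rightsquigarrow\xi^{a_{s}}$ brings the local action into the normal form $(x,y)\mapsto(\zeta x,\zeta^{i}y)$ that defines $A_{p,i}$. Thus a fixed point counted by $r_{i}$ satisfies $b_{s}=i\,a_{s}$ in $\ZZ/p\ZZ$. Summing the $p-1$ copies of the fixed point formula and exchanging the order of summation then gives $p-1=\sum_{s}\sum_{j=1}^{p-1}\frac{1}{(1-\xi^{ja_{s}})(1-\xi^{jb_{s}})}$.

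The \textbf{crucial step}, which I expect to be the only real subtlety, is that for a single fixed point this inner sum depends only on its singularity type and not on $a_{s}$: substituting $j\mapsto a_{s}^{-1}j$, a bijection of $\{1,\dots,p-1\}$ modulo $p$ since $a_{s}$ is a unit, turns it into $\sum_{j=1}^{p-1}\frac{1}{(1-\xi^{j})(1-\xi^{ij})}=(p-1)\,a_{i}(p)$. Grouping the fixed points according to their singularity type yields $p-1=(p-1)\sum_{i}r_{i}a_{i}(p)$, and dividing by $p-1$ proves the identity. The point worth emphasising is that the contribution of an individual point to $\sigma$ alone genuinely depends on $a_{s}$; only after summing over all powers of $\sigma$ does that dependence collapse to the singularity type.

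Finally I would evaluate the sums $a_{i}(p)$ in closed form. Writing $f(x)=(x^{p}-1)/(x-1)=1+x+\dots+x^{p-1}$ and differentiating $\log f$ once and twice, then evaluating at $x=1$, gives the standard values $\sum_{j=1}^{p-1}\frac{1}{1-\xi^{j}}=\frac{p-1}{2}$ and $\sum_{j=1}^{p-1}\frac{1}{(1-\xi^{j})^{2}}=\frac{(p-1)(5-p)}{12}$, whence $a_{1}(p)=\frac{5-p}{12}$. For $a_{2}(p)$ I would use the partial fraction $\frac{1}{(1-u)(1-u^{2})}=\frac{1}{4}\frac{1}{1-u}+\frac{1}{2}\frac{1}{(1-u)^{2}}+\frac{1}{4}\frac{1}{1+u}$ together with the auxiliary evaluation $\sum_{j=1}^{p-1}\frac{1}{1+\xi^{j}}=\frac{p-1}{2}$, which follows from $\frac{1}{1-u}+\frac{1}{1+u}=\frac{2}{1-u^{2}}$ and the fact that $j\mapsto 2j$ permutes the nonzero residues; this gives $a_{2}(p)=\frac{11-p}{24}$. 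The two remaining values at $p=5$ then follow from the symmetry $a_{i}(p)=a_{i^{-1}}(p)$ (obtained by the substitution $j\mapsto i^{-1}j$), which yields $a_{3}(5)=a_{2}(5)=\frac{1}{4}$, and from $a_{p-1}(p)=\frac{p+1}{12}$ (computed from $\frac{1}{(1-u)(1-u^{-1})}=\frac{1}{1-u}-\frac{1}{(1-u)^{2}}$), which yields $a_{4}(5)=\frac{1}{2}$.
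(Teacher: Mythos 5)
Your proof is correct, but it should be said up front that the paper does not prove this statement at all: it is quoted verbatim from Zhang (\cite{Zhang}, Lemma 1.6), so there is no internal proof to compare against. What you have written is a complete, self-contained derivation, and it is in fact the standard one: sum the holomorphic Lefschetz formula over all nontrivial powers $\sigma^{j}$, use that $S^{\sigma^{j}}=S^{\sigma}$ because $p$ is prime (you invoke this correctly via $\langle\sigma^{j}\rangle=\langle\sigma\rangle$), and then observe that the substitution $j\mapsto a_{s}^{-1}j$ collapses each point's contribution to a quantity depending only on the ratio $i\equiv b_{s}a_{s}^{-1}\pmod p$, i.e.\ on the singularity type $A_{p,i}$. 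You correctly identify this averaging step as the crux. Notably, this is exactly the technique the authors themselves deploy later in the paper without naming it: in the order-$8$ case they write ``by summing over the powers $\sigma^{k}$ for $k=1,\dots,7$ in the formula of the Holomorphic Lefschetz Theorem, we get $7=\sum\dots$'', which is your identity before dividing by $p-1$, so your proof makes explicit the mechanism behind the paper's own computations. Your closed-form evaluations all check out: $\sum_{j}\frac{1}{(1-\xi^{j})^{2}}=\frac{(p-1)(5-p)}{12}$ via logarithmic derivatives of $1+x+\dots+x^{p-1}$ at $x=1$; the partial fraction $\frac{1}{(1-u)(1-u^{2})}=\frac{1}{4}\frac{1}{1-u}+\frac{1}{2}\frac{1}{(1-u)^{2}}+\frac{1}{4}\frac{1}{1+u}$ together with $\sum_{j}\frac{1}{1+\xi^{j}}=\frac{p-1}{2}$ gives $a_{2}(p)=\frac{11-p}{24}$; and $a_{3}(5)=a_{2}(5)=\frac{1}{4}$, $a_{4}(5)=\frac{p+1}{12}\big|_{p=5}=\frac{1}{2}$ follow from the symmetry $a_{i}(p)=a_{i^{-1}}(p)$ and the identity $\frac{1}{(1-u)(1-u^{-1})}=\frac{1}{1-u}-\frac{1}{(1-u)^{2}}$. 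Two small points worth making explicit: first, that both exponents $a_{s},b_{s}$ are nonzero follows from local linearization of the finite-order action (an eigenvalue $1$ would produce a fixed curve, contradicting isolatedness) --- this is exactly Lemma \ref{lemme restriction sur les automor} of the paper, which you could cite directly; second, the assignment $s\mapsto i(s)$ is only well defined up to $i\leftrightarrow i^{-1}$ (depending on which eigenvalue is scaled to $\zeta$), but your symmetry $a_{i}(p)=a_{i^{-1}}(p)$ shows the total is unambiguous, which is precisely the convention issue the paper flags in the remark immediately following the proposition.
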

Let $1\leq i<p$ and $1\leq k<p$ be such that $ik=1\,\mbox{mod}\, p$.
As $A_{p,i}=A_{p,k}$, the notations for $r_{i}$ and $r_{k}$ in
Zhang's Lemma can be confusing. However, as $a_{i}(p)=a_{k}(p)$,
there should be no trouble in taking the convention that $r_{i}+r_{k}$
is the total number of $A_{p,i}=A_{p,k}$ singularities, rather that
choosing a representative $i$ or $k$ for every such pair $(i,k)$.

Let us recall that an automorphism of a vector space is called a reflection
if all its eigenvalues but one are equal to $1$. Let $S$ be a surface
and $G$ an automorphism group acting on $S$. Suppose that for every
automorphism of $G$ the fixed point set is finite. Let $s$ be a
fixed point of $G$; recall (see \cite{Barth}): 
\begin{lem}
\label{lemme restriction sur les automor}The action of the group
$G$ on the tangent space $T_{S,s}$ is faithful and has no reflection. 
\end{lem}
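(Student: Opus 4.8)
The statement to prove is: if $S$ is a surface with a finite automorphism group $G$ all of whose nontrivial elements have only isolated fixed points, and $s$ is a point fixed by $G$, then the induced action of $G$ on the tangent space $T_{S,s}$ is faithful and contains no reflection.

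\medskip

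The plan is to exploit the fact that near the fixed point $s$, the action of $G$ can be linearised. Since $G$ is finite and acts holomorphically fixing $s$, by the standard averaging/Cartan linearisation argument there are local holomorphic coordinates in which every $g \in G$ acts linearly, so the local action is identified with the representation $\rho: G \to GL(T_{S,s}) = GL_2(\CC)$ given by the differentials $d g_s$. Thus every question about the local geometry at $s$ reduces to linear algebra on $\rho$.

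\medskip

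For \emph{faithfulness}, suppose some $g \neq 1$ acts trivially on $T_{S,s}$, i.e. $d g_s = \mathrm{id}$. Under the linearisation, $g$ then acts as the identity on a whole neighbourhood of $s$ (a linear map equal to the identity is the identity), so $g$ fixes an open set pointwise. But a holomorphic automorphism of a connected surface fixing an open set must be the identity on all of $S$ by the identity theorem, contradicting $g \neq 1$. Hence $\rho$ is injective. First I would record this as the base case and note it already rules out any $g$ whose differential is the identity.

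\medskip

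For the \emph{no-reflection} part, suppose toward a contradiction that some $g \neq 1$ acts on $T_{S,s}$ as a reflection, so after diagonalising $d g_s$ has eigenvalues $1$ and $\lambda$ with $\lambda \neq 1$. In suitable linear coordinates $(x,y)$ the map $g$ is $(x,y) \mapsto (x, \lambda y)$, whose fixed locus is the line $\{y = 0\}$. This is a positive-dimensional fixed set for $g$ near $s$, contradicting the standing hypothesis that every nontrivial element of $G$ has only isolated (hence finite, zero-dimensional) fixed points. Therefore no nontrivial $g$ can be a reflection. The only subtlety to address is that $\lambda$, being an eigenvalue of a finite-order matrix, is a root of unity, but no finiteness issue arises: the eigenvalue $1$ alone forces a one-dimensional fixed line regardless of $\lambda$.

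\medskip

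The main obstacle is making the linearisation step clean and justifying that ``fixed in the tangent space'' or ``reflection on the tangent space'' genuinely propagates to the local fixed-point geometry on $S$; once the $G$-action is put in linear coordinates near $s$, both conclusions are immediate from examining eigenvalues of $2 \times 2$ matrices, so the heart of the argument is the Cartan linearisation together with the identity theorem for holomorphic maps. I expect no serious computational difficulty beyond carefully stating these two standard facts.
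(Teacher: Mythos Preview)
Your proposal is correct. The paper does not actually prove this lemma but merely cites \cite{Barth}; your argument via Cartan linearisation together with the identity theorem is precisely the standard proof one finds in such a reference, so there is nothing to compare.
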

In particular, if $G$ is cyclic of order $n$, the singularity type
of the image of the fixed point $s$ in the quotient $S/G$ is always
a $A_{n,q}$ with $q$ prime to $n$.


\begin{lem}
\label{lem:The-Euler-number}The Euler number of $S/G$ is given by
the formula \[
e(S/G)=\frac{1}{|G|}(e(S)+\sum_{n\geq2}(n-1)e(S_{n})),\]
 where $S_{n}=\{s\in S/|Stab(G,s)|=n\}$. The Euler number of the
minimal resolution $Z$ is the sum of $e(S/G)$ and the number of
irreducible components of the exceptional curves of the resolution
$\pi:Z\rightarrow S/G$. 
\end{lem}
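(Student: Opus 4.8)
The plan is to exploit two standard properties of the topological Euler number $e(\cdot)$ on complex algebraic surfaces: its additivity over a decomposition into $G$-invariant, locally closed (constructible) pieces, and its multiplicativity under finite unramified coverings, i.e. $e(Y)=|G|\,e(Y/G)$ whenever a finite group $G$ acts freely on $Y$. The decomposition I would use is precisely the stratification by stabilizer order: the strata $S_n=\{s\in S\mid |Stab(G,s)|=n\}$ are $G$-invariant, since points in a common orbit have conjugate (hence equal-order) stabilizers, and additivity gives $e(S)=\sum_{n\geq1}e(S_n)$. The quotient map restricts to $S_n\to S_n/G$, and I would compute the Euler number of each quotient stratum separately and then sum.

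First I would treat the generic stratum $S_1$, where $G$ acts freely: $S_1\to S_1/G$ is an unramified $|G|$-fold covering, so $e(S_1/G)=e(S_1)/|G|$. For $n\geq2$ the stratum $S_n$ consists of the finitely many points (by the standing hypothesis that nontrivial automorphisms have finite fixed loci) whose stabilizer has order $n$; each such orbit has $|G|/n$ elements, so $S_n/G$ is a finite set of $e(S_n)\,n/|G|$ points and $e(S_n/G)=\tfrac{n}{|G|}e(S_n)$. Without the finiteness hypothesis one gets the same identity from the orbifold formula $e(S_n/G)=\tfrac{1}{|G|}\sum_{g\in G}e(S_n\cap S^g)$ combined with $\sum_{g\in G}e(S_n\cap S^g)=n\,e(S_n)$, the latter obtained by refining $S_n$ into the loci with a fixed stabilizer subgroup $H$ (on which $g$ fixes a point iff $g\in H$). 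Summing over the strata yields $e(S/G)=\tfrac{1}{|G|}\sum_{n\geq1}n\,e(S_n)$, and writing $n=1+(n-1)$ together with $\sum_{n}e(S_n)=e(S)$ gives the announced formula, the index $n=1$ contributing nothing to $\sum_{n\geq2}(n-1)e(S_n)$.

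For the second assertion I would again use additivity of $e$. Let $\Sigma\subset S/G$ be the finite set of singular points, so that $\pi$ restricts to an isomorphism $Z\setminus\pi^{-1}(\Sigma)\cong(S/G)\setminus\Sigma$. Then $e(Z)=e((S/G)\setminus\Sigma)+e(\pi^{-1}(\Sigma))=(e(S/G)-|\Sigma|)+e(\pi^{-1}(\Sigma))$. Over each singular point the exceptional fibre is the resolution of a quotient singularity, hence a connected tree of smooth rational curves; if it has $r_p$ components, it has $r_p-1$ transversal intersection points, so additivity gives $e=2r_p-(r_p-1)=r_p+1$. Summing over $\Sigma$ produces $e(\pi^{-1}(\Sigma))=|\Sigma|+R$, where $R$ is the total number of irreducible components of the exceptional locus, and the two contributions $|\Sigma|$ cancel to give $e(Z)=e(S/G)+R$.

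The only genuinely delicate point is the claim that on each quotient stratum the Euler number scales by the reciprocal of the orbit size, which rests on the additivity of $e$ over the $G$-invariant stratification and its multiplicativity under the free action on $S_1$. In the setting of this paper this is painless: the strata $S_n$ with $n\geq2$ are finite point sets, so the only covering-space input actually needed is $e(S_1/G)=e(S_1)/|G|$, everything else being an elementary count of orbits and of resolution components. In the fully general statement one invokes the orbifold Euler number identity, but this reduces to the same free-quotient fact applied stratum by stratum; I expect this bookkeeping, rather than any deep input, to be the main thing to get right.
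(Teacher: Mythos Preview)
Your argument is correct. The paper does not actually prove this lemma: it is stated as a standard fact in the ``Generalities'' section, with references to \cite{Barth} and \cite{Roulleau11}, so there is no proof in the paper to compare against. Your stratification by stabilizer order, combined with multiplicativity of $e$ under free quotients and the tree-of-rational-curves count for the resolution, is exactly the standard derivation one would expect.
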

Let $C_{1},\dots,C_{k}$ be the irreducible components of the one
dimensional fibers of $\pi:Z\to X=S/G$. We have the relations $K_{Z}=\pi^{*}K_{X}-\sum_{i=1}^{i=k}a_{i}C_{i}$,
for rational numbers $a_{i}$ such that $K_{Z}C_{k}=-2-C_{k}^{2}$
and $C_{k}\pi^{*}K_{X}=0$. \\
 Moreover we have the equality $K_{X}^{2}=\frac{K_{S}^{2}}{|G|}$ where $|G|$ is
the order of $G$. As $K_{S}$ ample, the canonical $\mathbb{Q}$-divisor
$K_{X}$ is ample and $\pi^{*}K_{X}$ is nef. We remark also that
$K_{Z}^{2}\leq K_{X}^{2}$. 
\begin{lem}
\label{lemme q=00003D00003D00003Dpg=00003D00003D00003D0}Let $S$
be a surface with $q=p_{g}=0$. The minimal resolution $Z$ of the
quotient of $S$ by a group $G$ has always $q=p_{g}=0$. 
\end{lem}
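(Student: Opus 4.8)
The plan is to compare the cohomology of the structure sheaf along the two maps $\rho\colon S \to X=S/G$ and $\pi\colon Z \to X$, using that $q(Z)=h^{1}(Z,\mathcal{O}_{Z})$ and $p_{g}(Z)=h^{2}(Z,\mathcal{O}_{Z})$, together with the corresponding expressions for $S$. The key structural fact is that a quotient of a smooth surface by a finite group has at worst quotient singularities, and that two-dimensional quotient singularities are rational.

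First I would record the input coming from the resolution side. Since $X=S/G$ has only rational singularities, the minimal desingularisation $\pi\colon Z\to X$ satisfies $\pi_{*}\mathcal{O}_{Z}=\mathcal{O}_{X}$ and $R^{i}\pi_{*}\mathcal{O}_{Z}=0$ for $i>0$ (for $i\geq 2$ this is automatic from the dimension of the fibres). The Leray spectral sequence for $\pi$ then degenerates and yields $H^{i}(Z,\mathcal{O}_{Z})\cong H^{i}(X,\mathcal{O}_{X})$ for all $i$. In particular $q(Z)=h^{1}(X,\mathcal{O}_{X})$ and $p_{g}(Z)=h^{2}(X,\mathcal{O}_{X})$, so it suffices to show that these two groups vanish.

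Next I would handle the quotient side. The finite morphism $\rho\colon S\to X$ gives $H^{i}(X,\rho_{*}\mathcal{O}_{S})\cong H^{i}(S,\mathcal{O}_{S})$, and since we work in characteristic zero the averaging operator $\frac{1}{|G|}\sum_{g\in G}g$ makes the functor of $G$-invariants $(-)^{G}$ exact, hence it commutes with cohomology. Combined with the identity $(\rho_{*}\mathcal{O}_{S})^{G}=\mathcal{O}_{X}$, valid because the quotient is normal, this gives $H^{i}(X,\mathcal{O}_{X})\cong H^{i}(S,\mathcal{O}_{S})^{G}$. As $q(S)=p_{g}(S)=0$, the groups $H^{1}(S,\mathcal{O}_{S})$ and $H^{2}(S,\mathcal{O}_{S})$ vanish, so their $G$-invariant parts vanish as well. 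Chaining the two isomorphisms yields $q(Z)=p_{g}(Z)=0$.

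The only non-formal ingredient, and the step I expect to be the main point to invoke correctly, is the rationality of the singularities of $X$: one uses that the cyclic quotient singularities appearing on $S/G$ (as described in Lemma \ref{lemme restriction sur les automor} and the remark following it) are rational, equivalently that $R^{1}\pi_{*}\mathcal{O}_{Z}=0$ for their resolution. This is precisely what allows the cohomology of the structure sheaf to be transported between $Z$ and $X$ with no correction term; everything else is routine Leray spectral sequence and $G$-invariants bookkeeping.
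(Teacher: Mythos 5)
Your proof is correct and complete. Note that the paper itself offers no proof of this lemma: it sits in a background section whose statements are recalled from \cite{Barth} and \cite{Roulleau11}, and your argument is exactly the standard one those references supply --- two-dimensional quotient singularities are rational, so $\pi_{*}\mathcal{O}_{Z}=\mathcal{O}_{X}$ and $R^{i}\pi_{*}\mathcal{O}_{Z}=0$ give $H^{i}(Z,\mathcal{O}_{Z})\cong H^{i}(X,\mathcal{O}_{X})$ by Leray, while in characteristic zero $\mathcal{O}_{X}=(\rho_{*}\mathcal{O}_{S})^{G}$ is a direct summand of $\rho_{*}\mathcal{O}_{S}$, so $H^{i}(X,\mathcal{O}_{X})$ embeds in $H^{i}(S,\mathcal{O}_{S})$, which vanishes for $i=1,2$ by hypothesis. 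You also correctly isolate the rationality of the quotient singularities as the one non-formal ingredient; everything else is bookkeeping.
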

Suppose that $S$ is moreover minimal of general type and the fixed
points of automorphisms in $G$ are isolated, then:
\begin{lem}
\label{lem: when has general type}If $K_{Z}^{2}=0$, the surface
$Z$ has Kodaira dimension $\kappa\geq1$. If $K_{Z}^{2}>0$, the
surface $Z$ has Kodaira dimension $\kappa=2$.\end{lem}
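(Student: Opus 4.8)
The plan is to combine Riemann--Roch on $Z$ with the nef $\mathbb{Q}$-divisor $\pi^{*}K_{X}$ and the Enriques--Kodaira classification. First I record the numerical inputs already available: the preceding lemma gives $q(Z)=p_{g}(Z)=0$, hence $\chi(\OO_{Z})=1$, and since $S$ is of general type we have $(\pi^{*}K_{X})^{2}=K_{X}^{2}=K_{S}^{2}/|G|>0$. Because each exceptional component $C_{i}$ is $\pi$-contracted, the projection formula yields $\pi^{*}K_{X}\cdot C_{i}=0$, so the relation $K_{Z}=\pi^{*}K_{X}-\sum a_{i}C_{i}$ gives the crucial identity
\[
K_{Z}\cdot\pi^{*}K_{X}=(\pi^{*}K_{X})^{2}=K_{X}^{2}>0.
\]

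The first key step is to show that $-K_{Z}$ is not pseudo-effective. If it were, intersecting with the nef class $\pi^{*}K_{X}$ would give $(-K_{Z})\cdot\pi^{*}K_{X}\geq0$, contradicting the displayed identity. In particular $h^{0}(Z,-mK_{Z})=0$ for every $m\geq1$, so by Serre duality $h^{2}(Z,mK_{Z})=h^{0}(Z,(1-m)K_{Z})=0$ for all $m\geq2$. Feeding this into Riemann--Roch, $\chi(Z,mK_{Z})=1+\frac{1}{2}m(m-1)K_{Z}^{2}$, and dropping the nonnegative $h^{1}$, I obtain the lower bound
\[
h^{0}(Z,mK_{Z})\geq 1+\frac{1}{2}m(m-1)K_{Z}^{2}\qquad(m\geq2).
\]

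If $K_{Z}^{2}>0$ this bound grows quadratically in $m$, so the plurigenera have maximal growth and $\kappa(Z)=2$, settling the second assertion. If $K_{Z}^{2}=0$ the bound only gives $P_{m}(Z)\geq1$ for $m\geq2$, hence $\kappa(Z)\geq0$; what remains is to exclude $\kappa(Z)=0$. Here I would argue that a surface with $\kappa=0$ and $q=p_{g}=0$ is, on its minimal model, an Enriques surface; since an Enriques surface has self-intersection $0$ of its canonical class while each blow-up drops $K^{2}$ by one, the hypothesis $K_{Z}^{2}=0$ forces $Z$ \emph{itself} to be minimal, i.e.\ a genuine Enriques surface with $K_{Z}$ numerically trivial. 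But then $K_{Z}\cdot\pi^{*}K_{X}=0$, contradicting the identity above. Thus $\kappa(Z)\neq0$, and together with $\kappa(Z)\geq0$ this gives $\kappa(Z)\geq1$, proving the first assertion.

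The main obstacle is precisely this last exclusion: the rest is Riemann--Roch bookkeeping together with the elementary positivity computation $K_{Z}\cdot\pi^{*}K_{X}=K_{X}^{2}$, whereas ruling out $\kappa=0$ rests on the less obvious observation that the hypothesis $K_{Z}^{2}=0$ upgrades ``$\kappa=0$'' to minimality, hence to numerical triviality of $K_{Z}$, which the strict positivity of $K_{Z}\cdot\pi^{*}K_{X}$ then forbids. I would take care to check that among minimal surfaces of Kodaira dimension zero only the Enriques surface satisfies $q=p_{g}=0$, so that no torus, bielliptic, or $K3$ surface can intervene.
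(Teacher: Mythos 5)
Your proof is correct and follows essentially the same route as the paper: kill $h^{0}(Z,-mK_{Z})$ using $K_{Z}\cdot\pi^{*}K_{X}=K_{X}^{2}>0$ with $\pi^{*}K_{X}$ nef, apply Serre duality and Riemann--Roch to bound the plurigenera, and exclude $\kappa=0$ by reducing (via $q=p_{g}=0$ and $K_{Z}^{2}=0$) to $Z$ being a minimal Enriques surface. The only cosmetic difference is the final contradiction: the paper observes that $h^{0}(Z,3K_{Z})\neq0$ is impossible on an Enriques surface, while you use numerical triviality of $K_{Z}$ against $K_{Z}\cdot\pi^{*}K_{X}>0$; both are immediate.
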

\begin{proof}
(We follow the ideas from \cite{Keum}). The quotient surface has
$q=p_{g}=0$ and thus $\chi(\mathcal{O}_{Z})=1$. Let $m\geq1$ be
an integer, then $-mK_{Z}\pi^{*}K_{S/G}=-mK_{S/G}^{2}=-\frac{8}{|G|}m<0$,
therefore $H^{0}(Z,-mK_{Z})=\{0\}$ for every $m\geq1$. Let be $m\geq2$,
then by using Serre duality and Riemann-Roch: \[
H^{0}(Z,mK_{Z})=\chi(\mathcal{O}_{Z})+\frac{m(m-1)}{2}K_{Z}^{2}+h^{1}(Z,mK_{Z}).\]
 If $K_{Z}^{2}>0$, then immediately, $Z$ has general type. If $K_{Z}^{2}=0$,
the surface has $h^{0}(Z,2K_{Z})\not=0$ and cannot be rational by
Castelnuovo criterion. Moreover, as $\chi=1$ it cannot be a ruled
surface. Suppose that $Z$ is an Enriques surface. As $K_{Z}^{2}=0$,
it is a minimal surface, but this is impossible because $h^{0}(Z,3K_{Z})\not=0$
; therefore $\kappa>0$. 
\end{proof}
Let us now specialize to surfaces $X_{\Gamma}=\Gamma \backslash \mathbb{H}\times\mathbb{H}$
where $\Gamma$ is a cocompact and irreducible torsion-free lattice.
Let $\mu:\mathbb{H}\times\mathbb{H}\to\mathbb{H}\times\mathbb{H}$
be the involution exchanging the two factors. The group $\aut(\mathbb{H}\times\mathbb{H})$
is the semi-direct product of $\aut\mathbb{H}\times\aut\mathbb{H}$
by the group generated by $\mu$. Let $\Gamma<\aut\HH\times\aut\HH$
be a cocompact torsion-free lattice and $X_{\Gamma}=\Gamma\backslash\HH\times\HH$,
then, since $\HH\times\HH$ is the universal covering of $X_{\Gamma}$,
every automorphism $\sigma$ of $X_{\Gamma}$ lifts to an automorphism
$\tilde{\sigma}$ of $\HH\times\HH$. If $\tilde{\sigma}$ is in $\aut\HH\times\aut\HH$
it obviously normalizes $\Gamma$. Therefore, the factor preserving
automorphism group of $X_{\Gamma}$ is $N\Gamma/\Gamma$, where $N\Gamma$
is the normalizer of $\Gamma$ in $\aut\HH\times\aut\HH$. Altogether,
every automorphism is either represented by a coset $\gamma\Gamma$
with $\gamma\in N\Gamma$ or is of type $(\gamma\Gamma)\circ\mu$.
The following result is a key for our computations: 
\begin{thm}
\label{pro:Finite number fix pt} An automorphism $\sigma$ of $X_{\Gamma}$
has only finitely many fixed points or $\sigma$ is an involution
whose fixed point set is purely one-dimensional. The latter never
happens for a quaternionic fake quadric. \end{thm}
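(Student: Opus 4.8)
The plan is to analyze the lift $\tilde{\sigma}$ of an automorphism $\sigma$ of $X_\Gamma=\Gamma\backslash\HH\times\HH$, distinguishing the two cases identified in the text: either $\tilde{\sigma}\in\aut\HH\times\aut\HH$ (factor-preserving), or $\tilde{\sigma}=(\gamma\Gamma)\circ\mu$ where $\mu$ swaps the two factors. The key principle is that fixed points of $\sigma$ on $X_\Gamma$ correspond to points $(z,w)\in\HH\times\HH$ for which $\tilde{\sigma}(z,w)=\delta(z,w)$ for some $\delta\in\Gamma$, so I would study the fixed-point loci of elements $\delta^{-1}\tilde{\sigma}$ in $\aut(\HH\times\HH)$ and then descend to the quotient.

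First I would treat the factor-preserving case. Here $\tilde{\sigma}\delta^{-1}$ acts coordinatewise as $(g_1,g_2)\in\aut\HH\times\aut\HH$, so a fixed point upstairs requires $g_1$ to fix $z$ in $\HH$ and $g_2$ to fix $w$. A nontrivial element of $\aut\HH\cong\psl_2(\RR)$ fixes at most isolated points of $\HH$ (an elliptic element fixes exactly one point; hyperbolic and parabolic elements fix none in $\HH$). Hence each factor contributes at most a point, and the fixed locus is a finite set of isolated points, since it is the intersection of two pointwise conditions and $\Gamma$ acts properly discontinuously so only finitely many cosets $\delta$ can contribute. The only way to get a positive-dimensional locus would be for one of $g_1,g_2$ to be the identity while the other is elliptic, giving a locus of the form $\{z_0\}\times\HH$ or $\HH\times\{w_0\}$; I would rule this out using irreducibility of $\Gamma$, which prevents the relevant projection from stabilizing a fiber in a way that descends to a curve on the quaternionic fake quadric (this is where the quaternionic, i.e. irreducible-lattice, hypothesis enters). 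So in the factor-preserving case the fixed set is always finite.

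Next I would treat the swap case $\tilde{\sigma}=(\gamma\Gamma)\circ\mu$. Since $\mu$ is an involution and squares to the identity, after composing with an element of $\Gamma$ the relevant $\sigma$ will be an involution (or I reduce to the involution obtained from $\sigma^2$, which lands in the factor-preserving case already handled). The fixed-point equation becomes, in coordinates, a condition of the form $\tilde\sigma(z,w)=(g\,w,\,h\,z)=\delta(z,w)$, which couples the two factors: it forces $w$ to be determined by $z$ through a holomorphic map $\HH\to\HH$ (a fractional linear transformation), so the fixed locus is the graph of such a map and is therefore a genuine one-dimensional complex submanifold — this is precisely the purely one-dimensional fixed set of an involution alluded to in the statement. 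The final and most important step is to show that for a quaternionic fake quadric this one-dimensional case cannot occur. Here I expect the main obstacle: I would argue that such a fixed curve would be a totally geodesic diagonally embedded copy of $\HH$ descending to an irreducible curve $D$ on $X_\Gamma$, and by the standard Hirzebruch-type proportionality on such quotients this curve would have genus and self-intersection forcing numerical constraints incompatible with $c_1^2=8,\,c_2=4$ and $p_g=q=0$. Concretely, the existence of a one-dimensional fixed locus of an involution would produce a modular/geodesic curve whose presence contradicts either the arithmetic constraints on $\Gamma$ (via the quaternion algebra being division, so that no embedded subfield yields such a curve) or the numerical fake-quadric constraints; pinning down which of these two arguments the authors use, and verifying it rigorously, is the crux of the proof.
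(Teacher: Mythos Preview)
Your treatment of the first two cases (factor-preserving lifts and swap lifts) is essentially the paper's argument: in the factor-preserving case each nontrivial component of $\delta^{-1}\tilde\sigma$ is elliptic with a unique fixed point in $\HH$, and irreducibility of $\Gamma$ excludes one component being trivial; in the swap case the fixed locus upstairs is the graph of a M\"obius map $\HH\to\HH$, hence a curve. One small point: you are vague about why $\sigma$ must be an \emph{involution} in the swap case. The paper's argument is cleaner here: $\sigma^2$ is factor-preserving, so by the first case it has only finitely many fixed points; if $\sigma$ had infinitely many, so would $\sigma^2$, forcing $\sigma^2=\mathrm{id}$.

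The genuine divergence is in the final step, ruling out the one-dimensional fixed locus for a quaternionic fake quadric. You speculate about Hirzebruch proportionality for the geodesic curve, or arithmetic obstructions from the division algebra, and explicitly flag that you do not know which argument applies. The paper does neither: it simply invokes the topological Lefschetz fixed point formula (their Corollary~\ref{cor:lefschetz top}), which for a fake quadric gives $e(S^\sigma)\in\{2,4\}$. Since the fixed locus $C$ is a smooth compact curve whose components are quotients of $\HH$ by torsion-free groups (hence of genus $\geq 2$, so each contributes $e\leq -2$), the Euler number $e(C)$ cannot be positive, and one is done. This is much shorter than either of your proposed routes and uses only the cohomological data $q=p_g=0$, $h^{1,1}=2$ already recorded for fake quadrics; no proportionality or quaternion arithmetic is needed.
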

\begin{proof}
Suppose that $\gamma$ is in the subgroup $\aut\mathbb{H}\times\aut\mathbb{H}$.
Then, as explained above, $\sigma$ can be represented by a coset
$\gamma\Gamma$ with $\gamma\in N\Gamma$, where $N\Gamma$ is the
normalizer of $\Gamma$ in $G$. It is sufficient to show that $\gamma$
as a mapping $\mathbb{H}\times\mathbb{H}\longrightarrow\mathbb{H}\times\mathbb{H}$
has only finitely many fixed points in $\mathbb{H}\times\mathbb{H}$
modulo the action of $\Gamma$. Assume that $\gamma(z,w)=(\gamma^{\rho_{1}}z,\gamma^{\rho_{2}}w)=(z,w)$.
Then $\gamma$ is an elliptic transformation, i.e. $4\det(\gamma)-\text{Tr}(\gamma^{\rho_{i}})^{2}>0$
for $i=1,2$. The reason is the following: if $(z,w)$ is a fixed
point, then in particular $z$ is a fixed point of $\gamma^{\rho_{1}}$
and $w$ is a fixed point of $\gamma^{\rho_{2}}$. The only automorphisms
of $\mathbb{H}$ with fixed points in $\mathbb{H}$ are elliptic transformations.
Then, by definition $\gamma$ is elliptic.

Every non-trivial elliptic transformation of $\mathbb{H}$ has a unique
fixed point in $\mathbb{H}$ (the eigenvalue of the matrix which has
the positive imaginary part). Moreover, because $\Gamma$ is irreducible,
$\gamma^{\rho_{1}}$ is non-trivial if and only if $\gamma^{\rho_{2}}$
is non trivial (observe here that if $\Gamma$ were not irreducible,
there would be an automorphism of the form $(\gamma_{1},1)$ with non-isolated
fixed points).

Let thus $(z,w)$ be the unique fixed point of $\gamma$ in $\mathbb{H}\times\mathbb{H}$.
The $N\Gamma$-orbit of $(z,w)$ is discrete in $\mathbb{H}\times\mathbb{H}$,
therefore there is only one representative of $(z,w)$ modulo the
action of $N\text{\ensuremath{\Gamma}}$. Now, $N\Gamma$ is a finite
index extension of $\Gamma$, therefore there are only finitely many
representatives of $(z,w)$ modulo $\Gamma$.

Let us now suppose that the automorphism $\sigma$ is represented
by $\gamma\mu\in\aut(\mathbb{H}\times\mathbb{H})$. Then $(\gamma\mu)^{2}=(\gamma^{\rho_{1}}\gamma^{\rho_{2}},\gamma^{\rho_{2}}\gamma^{\rho_{1}})$
is an element of $\aut\mathbb{H}\times\aut\mathbb{H}$ that acts on
the surface. Suppose that $\sigma$ has an infinite number of fixed
points, then $\sigma^{2}$ must be the identity and $\gamma^{\rho_{1}}\gamma^{\rho_{2}}$
must be in $\Gamma$. A fixed point $(z,w)$ satisfies \[
(\gamma^{\rho_{1}}w,\gamma^{\rho_{2}}z)=\lambda(z,w)\]
for a $\lambda\in\Gamma$. After the change of $\gamma$ by $\lambda^{-1}\gamma$,
we can assume that $\lambda=1$, thus $w=\gamma^{\rho_{2}}z$ and
$\gamma^{\rho_{1}}\gamma^{\rho_{2}}=1$ because $\gamma^{\rho_{1}}\gamma^{\rho_{2}}$
is in $\Gamma$ that is torsion free. Reciprocally, let be $t\in\mathbb{H}$
; as $\gamma^{\rho_{1}}\gamma^{\rho_{2}}=1$ the point $(t,\gamma^{\rho_{2}}t)$
satisfy \[
\gamma\mu(t,\gamma^{\rho_{2}}t)=(t,\gamma^{\rho_{2}}t).\]
 Therefore there are no isolated fixed points for $\sigma$.

Assume now that $X_{\Gamma}$ is a quaternionic fake quadric. The
fixed locus $C$ of $\sigma$ is a smooth curve. The topological Lefschetz
formula (see Corollary \ref{cor:lefschetz top}) implies that the
genus of the irreducible components of $C$ is negative, thus the
automorphism has only a finite number of fixed points. 
\end{proof}

\section{Quaternionic fake quadrics with non-trivial automorphism groups.}

As already mentioned, a series of examples of quaternionic fake quadrics
has been constructed by I. Shavel in \cite{Shavel78}.
There, the author concentrates on arithmetic lattices $\Gamma\supseteq\Gamma_{\OO}^{1}$
which are defined by quaternion algebras over real quadratic fields
of class number one. More recently, in \cite{dz11}, more examples
of quaternionic quadrics associated with quaternion algebras over
quadratic fields have been found. In this section we will list all
known examples of quaternionic fake quadrics together with their automorphism
groups. We refer the reader to \cite{vign2} for generalities on quaternion
algebras.

Let us first make a few general observations, before we discuss the
examples in detail. For technical reasons it is more practical to
consider the group $\pgl_{2}^{+}(\RR)\times\pgl_{2}^{+}(\RR)$, where
$\pgl_{2}^{+}(\RR)=\Gl_{2}^{+}(\RR)/\RR^{\ast}$ and $\Gl_{2}^{+}(\RR)$
is the group of all $2\times2$ matrices with positive determinant,
instead of $\psl_{2}(\RR)\times\psl_{2}(\RR)$. We identify $\pgl_{2}^{+}(\RR)\times\pgl_{2}^{+}(\RR)$
with the group $\aut\HH\times\aut\HH$ of holomorphic automorphisms
which preserve the two factors. 

From the point of view of Theorem \ref{pro:Finite number fix pt} it is
more interesting to consider the automorphism subgroups $G\leq N\Gamma/\Gamma=:\aut(X_{\Gamma})$
of factor preserving automorphisms, which we will do in the following.
Since fake quadrics $X_{\Gamma}$ have relatively small covolume,
they tend to be large groups and therefore the order $|\aut(X_{\Gamma})|=|N\Gamma/\Gamma|$
is not too big. The normalizers $N\Gamma$ will be maximal lattices
and all such lattices can be described arithmetically as follows (see
\cite{borel}). \\
 If $X_{\Gamma}$ is a quaternionic fake quadric, there is an associated
tuple $(k,\rho_{1},\rho_{2},B,\OO)$ as described in Section \ref{sec:generalities}.
The quaternion algebra $B$ is for fixed $\rho_{1},\rho_{2}$ uniquely
determined (up to isomorphism) by the reduced discriminant $d_{B}=v_{1}\cdot\ldots\cdot v_{r}$,
the formal product over finite places $v_{i}$ of $k$ where $B$
is ramified, i.e. $B\otimes_{k}k_{v_{i}}\ncong M_{2}(k_{v_{i}})$,
hence $(k,\rho_{1},\rho_{2},B,\OO)=(k,\rho_{1},\rho_{2},d_{B},\OO)$.
In the following we will often abbreviate such a datum which determines
the quaternion algebra $B$ with $B(k,d_{B})$ or $B(k,v_{1}\ldots v_{r})$.
Let us fix such a datum $B(k,v_{1}\ldots v_{r})$ and let $B^{+}$
be the group of all $x\in B^{\ast}$ such that the reduced norm $\Nrd(x)$
is totally positive. It is known that \begin{equation}
N\Gamma_{\OO}^{+}=\{x\in B^{+}\mid x\OO x^{-1}=\OO\}/k^{\ast}\label{max}\end{equation}
 is a maximal lattice. 
$N\Gamma_{\OO}^{+}$ contains $\Gamma_{\OO}^{1}$ and it is known
that $\Gamma_{\OO}^{1}$ is normal in $N\Gamma_{\OO}^{+}$ with $N\Gamma_{\OO}^{+}/\Gamma_{\OO}^{1}\cong(\ZZ/2\ZZ)^{l}$
an elementary abelian $2$-group with $l\geq r$ and $r$ is the number
of ramified places in $B$ (see \cite{Shavel78} for instance). If
the class number of $k$ is one (as will be the case in all the consideres
examples) there is an alternative description of $N\Gamma_{\OO}^{+}$
as

\begin{align}
N & \Gamma_{\OO}^{+}=\{\alpha=\rho_{1}^{\epsilon_{1}}\cdots\rho_{r}^{\epsilon_{r}}\lambda\tau\in B^{\ast}\mid\label{max2}\\
 & \Nrd(\alpha)\ \text{totally positive},\ \tau\in k^{\ast},\ \lambda\in\OO^{\ast},\ \epsilon_{i}\in\{0,1\},\ \Nrd(\rho_{i})\ \text{divides}\ d_{B}\}/k^{\ast}\notag\end{align}

\noindent (see \cite{Shavel78}, p.~223). It follows that a quaternionic
fake quadric $X_{\Gamma}$ with $\Gamma\supseteq\Gamma_{\OO}^{1}$
will have an elementary abelian $2$-group as the automorphism group
$\aut(X_{\Gamma})$. All Shavel's examples will provide such automorphism
groups.

\subsection{A fake quadric with automorphism group \boldmath $\mathbb{Z}/2\mathbb{Z}$.}

There are examples of quaternionic fake quadrics $X_{\Gamma}$ whose
automorphism group is $\ZZ/2\ZZ$ and, as mentioned, they already
appear in \cite{Shavel78}.

For example, let $k=\QQ(\sqrt{2})$ and let $B=B(k,\pP_{3}\pP_{7})$
be the (unique) quaternion algebra over $k$ which is ramified exactly
at the two finite primes $\pP_{3}$ and $\pP_{7}$ of $k$ lying over
the rational primes $3$ and $7$ respectively. Since $k$ has the
class number one, there is the unique (up to conjugation) maximal
order $\OO$ in $B$. Consider the group $\Gamma_{\OO}^{1}$. By \cite{Shavel78},
Proposition 4.7, $X_{\Gamma_{\OO}^{1}}$ is smooth. By the already
mentioned general result of Matsushima and Shimura \cite{MatsushimaShimura},
$q(X_{\Gamma_{\OO}^{1}})=0$. The Euler number $c_{2}(X_{\Gamma_{\OO}^{1}})$
is computed via the volume formula of Shimizu (see \cite{Shavel78},
Theorem 3.1). Since the prime $3$ is inert and $7$ is decomposed
in $k$, this formula gives $c_{2}(X_{\Gamma_{\OO}^{1}})=8$. The
normalizer of $\Gamma_{\OO}^{1}$ is $N\Gamma_{\OO}^{+}$ and by \cite{Shavel78},
Proposition 1.3 and 1.4 we have \[
\aut(X_{\Gamma_{\OO}^{1}})\cong L_{1}/L_{2}=\langle[\pP_{3}],[\pP_{7}]\rangle\cong(\ZZ/2\ZZ)^{2},\]
 where $L_{1}$ is the group of principal fractional ideals of type
$(\pP_{3})(\pP_{7})I^{2}$ ($I$ a principal fractional ideal) for
which one can find a totally positive generator and $L_{2}$ consists
of all principal ideals of type $(a^{2})$ with $a\in k$ (See also
\cite{shim:zeta}, 3.12). Let $\Gamma_{\pP_{3}}$ be the kernel of
the canonical homomorphism \[
N\Gamma_{\OO}^{+}\longrightarrow L_{1}/L_{2}\longrightarrow\langle[\pP_{7}]\rangle.\]
 By Shavel's criterion (see \cite{Shavel78}, Theorem 4.11) $\Gamma_{\pP_{3}}$
is torsion free and as $[\Gamma_{\pP_{3}}:\Gamma_{\OO}^{1}]=2$, $X_{\Gamma_{\pP_{3}}}$
is a fake quadric with $\aut(X_{\Gamma_{\pP_{3}}})\cong\ZZ/2\ZZ$.

\subsection{A fake quadric with automorphism group \boldmath $(\mathbb{Z}/2\mathbb{Z})^{2}$.}

Consider again $k=\QQ(\sqrt{2})$ and now the quaternion algebra $B=B(k,\pP_{2}\pP_{5})$
over $k$ which is ramified exactly at the two finite places $\pP_{2}$
and $\pP_{5}$. Again there is the unique maximal order $\OO$ in
$B$ and as in the previous example, Shavel's results show that $X_{\Gamma_{\OO}^{1}}$
is smooth. The prime $2$ is ramified and $5$ is inert in $k$ and
therefore Shimizu's volume formula gives $c_{2}(X_{\Gamma_{\OO}^{1}})=4$.
Hence $X_{\Gamma_{\OO}^{1}}$ is a fake quadric. With the same arguments
as in the previous example $\aut(X_{\Gamma_{\OO}^{1}})$ is isomorphic
to $(\ZZ/2\ZZ)^{2}$.

\subsection{A fake quadric with automorphism group of order 20.}

\label{ord10} Consider $k=\QQ(\sqrt{5})$ and the quaternion algebra
$B=B(k,\pP_{2}\pP_{5})$ over $k$ which is ramified exactly at the
primes $\pP_{2}$ and $\pP_{5}$. In this case the group $\Gamma_{\OO}^{1}$
(where $\OO$ is again a maximal order in $B$), contains torsion
elements of order $5$ and no other torsions (see \cite{Shavel78},
Proposition 4.7 and Theorem 4.8) %
\footnote{ Note that the symbol $\left(\frac{}{p}\right)$ in Theorem 4.8 of
\cite{Shavel78} for $p=2$ should be read as the Kronecker symbol,
i.e. $\left(\frac{d}{2}\right)=1\Leftrightarrow d\equiv\pm1\bmod8$
and $=-1\Leftrightarrow d\equiv\pm3\bmod8$.%
}. Volume formula of Shimizu gives in this case $c_{2}(X_{\Gamma_{\OO}^{1}})=4/5$.
Let us now give a torsion-free subgroup $\Gamma<\Gamma_{\OO}^{1}$
of index $5$. The corresponding surface $X_{\Gamma}$ will be a fake
quadric. Since $\pP_{2}$ is ramified in $B$, there is a prime ideal
$\mathfrak{P}_{2}$ in $\OO$ lying over $\pP_{2}$ and satisfying
$\mathfrak{P}_{2}^{2}=\pP_{2}\OO$. Let \[
\Gamma=\Gamma_{\OO}^{1}(\mathfrak{P}_{2})=\{x\in\Gamma_{\OO}^{1}\mid x\equiv1\bmod\mathfrak{P}_{2}\}.\]
 $\Gamma_{\OO}^{1}(\mathfrak{P}_{2})$ is a normal subgroup in $\Gamma_{\OO}^{1}$
and the index can be computed via the localisation of $B$ at $\pP_{2}$.
Namely, observe first that $\Gamma_{\OO}^{1}/\Gamma_{\OO}^{1}(\mathfrak{P}_{2})$
is isomorphic to the factor group $\OO^{1}/\OO^{1}(\mathfrak{P}_{2})$,
where \[
\OO^{1}(\mathfrak{P}_{2})=\{x\in\OO^{1}\mid x\equiv1\bmod\mathfrak{P}_{2}\}.\]
 This is because $-1$ is in $\OO^{1}(\mathfrak{P}_{2})$. Let $\OO_{\pP_{2}}$
be the maximal order in $B_{\pP_{2}}$, i.e. $\OO_{\pP_{2}}=\OO\otimes_{\oo_{k}}\oo_{k_{\pP_{2}}}$,
where $\oo_{k_{\pP_{2}}}$ is the ring of integers in $k_{\pP_{2}}$.
Its maximal ideal $\widehat{\mathfrak{P}}_{2}$ is the topological
closure of $\mathfrak{P}_{2}$. By the strong approximation property
$\OO^{1}/\OO^{1}(\mathfrak{P}_{2})\cong\OO_{\pP_{2}}^{1}/\OO_{\pP_{2}}^{1}(\widehat{\mathfrak{P}}_{2})$
Note that $B_{\pP_{2}}^{1}=\OO_{\pP_{2}}^{1}$, since $\OO_{\pP_{2}}$
is the subring of $B_{\pP_{2}}$ consisting of elements whose reduced
norm is less or equal 1. We use a theorem of C. Riehm (see \cite{Riehm},
Theorem 7) by which \[
\OO_{\pP_{2}}^{1}/\OO_{\pP_{2}}^{1}(\widehat{\mathfrak{P}}_{2})\cong\ker((\OO_{\pP_{2}}/\widehat{\mathfrak{P}}_{2})^{\ast}\stackrel{Nr}{\longrightarrow}(\oo_{k_{\pP_{2}}}/\pP_{2})^{\ast})\cong\ker(\FF_{16}^{\ast}\stackrel{Nr}{\longrightarrow}\FF_{4}^{\ast})\cong\ZZ/5\ZZ\]
 (Note here that the norm map induces a surjective homomorphism of
multiplicative groups). Since $\Gamma_{\OO}^{1}(\mathfrak{P}_{2})$
is embedded in $\OO_{\pP_{2}}^{1}(\widehat{\mathfrak{P}}_{2})/\pm1$
and the latter group is a pro-2-group (again by \cite{Riehm}) it
can not contain elements of order $5$. Therefore, $\Gamma_{\OO}^{1}(\mathfrak{P}_{2})$
is a torsion-free group and $X_{\Gamma_{\OO}^{1}(\mathfrak{P}_{2})}$
is a fake quadric. Since $\Gamma_{\OO}^{1}$ contains a $5$-torsion
and $\Gamma_{\OO}^{1}$ normalizes $\Gamma_{\OO}^{1}(\mathfrak{P}_{2})$,
$X_{\Gamma_{\OO}^{1}(\mathfrak{P}_{2})}$ contains an automorphism
of order $5$. In order to determine the full automorphism group $\aut(X_{\Gamma_{\OO}^{1}(\mathfrak{P}_{2})})$
we first need to find the normaliser of $\Gamma_{\OO}^{1}(\mathfrak{P}_{2})$.
By definition elements of $N\Gamma_{\OO}^{+}$ normalise $\OO$, i.e.\ $x\OO x^{-1}=\OO$.
Let $\gamma\in\Gamma_{\OO}^{1}(\mathfrak{P}_{2})$. Since the class
number of $k$ is one, every two-sided $\OO$-ideal is principal and
we can choose $\Pi_{2}\in\OO$ such that $\Pi_{2}\OO=\mathfrak{P}_{2}$.
Moreover, as $\mathfrak{P}_{2}$ is uniquely determined by the property
that the $\oo_{k}$-ideal $\Nrd(\mathfrak{P}_{2})$ is $\pP_{2}$,
we can choose $\Pi_{2}$ such that $\Nrd(\Pi_{2})=2$. Then $\gamma=\pm(1+m\Pi_{2})$
with $m\in\OO$. For $x\in N\Gamma_{\OO}^{+}$ we have $x\gamma x^{-1}=1+xm\Pi_{2}x^{-1}=1+m'x\Pi_{2}x^{-1}$
with some $m'\in\OO$. The element $x\Pi_{2}x^{-1}$ lies in $\OO$
and $\Nrd(x\Pi_{2}x^{-1})=\Nrd(\Pi_{2})=2$. Since $\mathfrak{P}_{2}=\langle\Pi_{2}\rangle$
is the unique prime ideal over $2$, $x\Pi_{2}x^{-1}\in\mathfrak{P}_{2}$
and $x\gamma x^{-1}\in\Gamma_{\OO}^{1}(\mathfrak{P}_{2})$. It follows
that the normaliser of $\Gamma_{\OO}^{1}(\mathfrak{P}_{2})$ is $N\Gamma_{\OO}^{+}$.
This leads to an exact sequence 

\begin{equation}
1\longrightarrow\Gamma_{\OO}^{1}/\Gamma_{\OO}^{1}(\mathfrak{P}_{2})\longrightarrow N\Gamma_{\OO}^{+}/\Gamma_{\OO}^{1}(\mathfrak{P}_{2})\longrightarrow N\Gamma_{\OO}^{+}/\Gamma_{\OO}^{1}\longrightarrow1\label{exact}\end{equation}
 which we can write abstractly as \[
1\longrightarrow\mathbb{Z}/5\mathbb{Z}\longrightarrow\aut(X_{\Gamma_{\OO}^{1}(\Pi_{2})})\longrightarrow\mathbb{Z}/2\mathbb{Z}\times\mathbb{Z}/2\mathbb{Z}\longrightarrow1.\]
Let $\lambda\in\OO^{1}$ satisfy $\lambda^{5}=-1$, i.~e.~$\lambda$
gives rise to a $5$-torsion in $\Gamma_{\OO}^{1}$. Then $\lambda$
satisfies the equation $\lambda^{2}-\frac{1+\sqrt{5}}{2}\lambda+1=0$
over $k$. We can assume that $\lambda$ generates $\Gamma_{\OO}^{1}/\Gamma_{\OO}^{1}(\mathfrak{P}_{2})$.
Let $g=\lambda+1$. The reduced norm of $g$ is $\Nrd(g)=(\lambda+1)(\overline{\lambda}+1)=\Nrd(\lambda)+\Trd(\lambda)+1=2+\frac{1+\sqrt{5}}{2}=\frac{5+\sqrt{5}}{2}$,
where $\Trd$ is the reduced trace. Since $\frac{5+\sqrt{5}}{2}$
is a totally positive generator of the prime ideal over $5$, $g$
defines an element of $N\Gamma_{\OO}^{+}$ (see (\ref{max2})). On
the other hand $g^{2}=(\lambda+1)^{2}=\lambda^{2}+2\lambda+1=(\frac{1+\sqrt{5}}{2}\lambda-1)+2\lambda+1=(\frac{5+\sqrt{5}}{2})\lambda$.
This shows that $g$ has order $10$ in $N\Gamma_{\OO}^{+}$ and hence
gives an element of order $10$ in $N\Gamma_{\OO}^{+}/\Gamma_{\OO}^{1}(\mathfrak{P}_{2})$.
Moreover the image of $g$ in $N\Gamma_{\OO}^{+}/\Gamma_{\OO}^{1}$
is not trivial. Using the computer algebra system PARI, we can check that both ramified primes
$\pP_{2}$ and $\pP_{5}$ are not split in $k(\sqrt{-2})$. This implies
that $k(\sqrt{-2})\subset B$ (see \cite{Shavel78}, Proposition 4.5)
and we can take $\sqrt{-2}$ as the generator $\Pi_{2}$ of $\mathfrak{P}_{2}$.
Hence, $\Pi_{2}$, considered as an element of $N\Gamma_{\OO}^{+}$, is of order
$2$ and the images of $g$ and $\Pi_{2}$ in $N\Gamma_{\OO}^{+}/\Gamma_{\OO}^{1}$
generate this group. 

\begin{lem}
\label{commutativity_of_g_and_h} Let $g$ and $\Pi_{2}$ be elements
constructed above Then in $N\Gamma_{\OO}^{+}$ we have the relation
$\Pi_{2} g\Pi_{2}=g^{-1}$ modulo $\Gamma_{\OO}^{1}(\mathfrak{P}_{2})$ \end{lem}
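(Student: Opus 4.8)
The plan is to deduce the dihedral relation from the single fact that conjugation by $\Pi_{2}$ inverts the generator $\lambda$ of the normal cyclic subgroup $\langle\lambda\rangle\cong\ZZ/5\ZZ$. Throughout I work inside $\aut(X_{\Gamma_{\OO}^{1}(\mathfrak{P}_{2})})=N\Gamma_{\OO}^{+}/\Gamma_{\OO}^{1}(\mathfrak{P}_{2})$, a group of order $20$. There, modding out by $k^{\ast}$ kills the scalar $\frac{5+\sqrt{5}}{2}$, so the identity $g^{2}=\frac{5+\sqrt{5}}{2}\lambda$ becomes simply $g^{2}=\lambda$; thus $g$ has order $10$, the subgroup $\langle g^{2}\rangle=\langle\lambda\rangle$ is the normal $5$-Sylow, and $\Pi_{2}$ has order $2$ since $\Pi_{2}^{2}=-2\in k^{\ast}$. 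Because $\Pi_{2}=\Pi_{2}^{-1}$ in this quotient, the asserted relation $\Pi_{2}g\Pi_{2}=g^{-1}$ is equivalent to $\Pi_{2}g\Pi_{2}^{-1}=g^{-1}$.

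\emph{Key step: conjugation by $\Pi_{2}$ inverts $\lambda$.} Conjugation by $\Pi_{2}$ is an automorphism of $\langle\lambda\rangle\cong\ZZ/5\ZZ$; since $\Pi_{2}^{2}=-2\in k^{\ast}$ acts trivially, this automorphism has order dividing $2$, hence is either the identity or the inversion $\lambda\mapsto\lambda^{-1}$. To exclude the identity I reduce modulo $\mathfrak{P}_{2}$, using the local picture set up above: $\Gamma_{\OO}^{1}/\Gamma_{\OO}^{1}(\mathfrak{P}_{2})\cong\ker(\FF_{16}^{\ast}\stackrel{Nr}{\longrightarrow}\FF_{4}^{\ast})$ via the reduction $\OO\to\OO/\mathfrak{P}_{2}\cong\FF_{16}$. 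Since $\Pi_{2}$ is a prime element of the local division algebra $B_{\pP_{2}}$ normalising $\OO$, conjugation by $\Pi_{2}$ induces on the residue field $\FF_{16}$ its Frobenius over $\FF_{4}$, which is nontrivial and fixes exactly $\FF_{4}$. The image $\tilde{\lambda}$ has order $5$, hence lies outside $\FF_{4}^{\ast}$ (of order $3$) and is not fixed; so conjugation by $\Pi_{2}$ cannot be the identity on $\langle\lambda\rangle$ and must be the inversion. Concretely, $\Pi_{2}\lambda\Pi_{2}^{-1}$ and $\lambda^{-1}$ both lie in $\OO^{1}$ and have the same reduction modulo $\mathfrak{P}_{2}$, so their quotient lies in $\OO^{1}(\mathfrak{P}_{2})$, and $\Pi_{2}\lambda\Pi_{2}^{-1}=\lambda^{-1}$ holds already in $\aut(X_{\Gamma_{\OO}^{1}(\mathfrak{P}_{2})})$.

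\emph{Transport to $g$.} Because $\langle\lambda\rangle$ is normal and the quotient $N\Gamma_{\OO}^{+}/\Gamma_{\OO}^{1}\cong(\ZZ/2\ZZ)^{2}$ is abelian, the conjugate $h:=\Pi_{2}g\Pi_{2}^{-1}$ has the same image as $g$ there, so $h=\lambda^{m}g$ for some $m$. As $g$ commutes with $\lambda=g^{2}$, this gives $h^{2}=\lambda^{2m+1}$; on the other hand $h^{2}=\Pi_{2}g^{2}\Pi_{2}^{-1}=\Pi_{2}\lambda\Pi_{2}^{-1}=\lambda^{-1}$ by the key step. Hence $2m+1\equiv-1\pmod 5$, so $m\equiv4$ and $h=\lambda^{4}g=g^{9}=g^{-1}$, which is exactly the claimed relation.

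The main obstacle is the key step, i.e.\ pinning down the conjugation action of the uniformiser $\Pi_{2}$ on $\lambda$ precisely enough: one must identify the induced automorphism of the residue field $\FF_{16}$ as the nontrivial Frobenius (equivalently, rule out that $\Pi_{2}$ centralises $\lambda$) and then transfer this residual computation back to an identity \emph{modulo} $\Gamma_{\OO}^{1}(\mathfrak{P}_{2})$ through the local-global dictionary already employed above. Once that is in hand, everything else is routine bookkeeping in a group of order $20$.
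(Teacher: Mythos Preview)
Your argument is correct. Both your proof and the paper's rest on the same underlying fact about the local division algebra $B_{\pP_{2}}$: conjugation by the uniformiser $\Pi_{2}$ acts on the unramified quadratic subfield $L_{\pP_{2}}$ as the nontrivial Galois automorphism. The difference is in how this fact is used. The paper observes that $g=1+\lambda$ itself lies in $L_{\pP_{2}}=k_{\pP_{2}}(\lambda)$, so one has directly $\Pi_{2}g\Pi_{2}^{-1}=\bar{g}$, and then $\bar{g}=g^{-1}$ in $N\Gamma_{\OO}^{+}$ because $g\bar{g}=\Nrd(g)\in k^{\ast}$; this yields the relation already in $N\Gamma_{\OO}^{+}$, not merely modulo $\Gamma_{\OO}^{1}(\mathfrak{P}_{2})$, in a single step. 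You instead push the Galois action down to the residue field $\FF_{16}$, apply it to $\lambda$ to get $\Pi_{2}\lambda\Pi_{2}^{-1}\equiv\lambda^{-1}$, and then lift this to $g$ by a small group-theoretic computation inside the order-$20$ quotient. Your route is a bit more circuitous and only proves the relation modulo $\Gamma_{\OO}^{1}(\mathfrak{P}_{2})$ (which is all the lemma asks for); the paper's route is shorter, avoids the transfer step, and shows the identity holds on the nose in $N\Gamma_{\OO}^{+}$.
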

\begin{proof}
The element $\Pi_{2}$ generates $\mathfrak{P}_{2}$. Consider $g$
and $\Pi_{2}$ as the elements of the localisation $B_{\pP_{2}}$
of $B$ at $\pP_{2}$. This is a division quaternion algebra over
$k_{\pP_{2}}$ and has a representation \[
B_{\pP_{2}}=L_{\pP_{2}}\oplus\Pi_{2}L_{\pP_{2}},\]
 where $L_{\pP_{2}}$ is the unique unramified quadratic extension
of $k_{\pP_{2}}$ (see \cite{vign2}, p.34). For every $t\in L_{\pP_{2}}$
we have $t\Pi_{2}=\Pi_{2}\overline{t}$, where $\overline{t}$ is
the Galois-conjugate of $t$ in $L_{\pP_{2}}$. The element $g$ lies
in $k_{\pP_{2}}(\lambda)=k_{\pP_{2}}(\xi_{5})$ which is an unramified
quadratic extension of $k_{\pP_{2}}$, so $L_{\pP_{2}}=k_{\pP_{2}}(\lambda)$.
Therefore $g\in L_{\pP_{2}}$ and $g\Pi_{2}=\Pi_{2}\overline{g}$.
Because $g\overline{g}$ is in $k^{\ast}$ we have that $\overline{g}=g^{-1}$
considered as an element of $N\Gamma_{\OO}^{+}\subset B_{\pP_{2}}^{\ast}/k_{\pP_{2}}^{\ast}$.
This gives a relation $\Pi_{2}g\Pi_{2}=g^{-1}$ in $N\Gamma_{\OO}^{+}$, since $\Pi_2^2=1$ 
in $N\Gamma_{\OO}^+$. Also, $g$ and $\Pi_{2} g\Pi_{2}=g^{-1}$ are not equal modulo
$\Gamma_{\OO}^{1}(\mathfrak{P}_{2})$ because this would imply that
$g^{2}\in\Gamma_{\OO}^{1}(\mathfrak{P}_{2})$. But as $\Gamma_{\OO}^{1}(\mathfrak{P}_{2})$
is torsion-free and $g^{2}$ is of finite order, this is impossible. \end{proof}
\begin{prop}
With above notations we have \[
\aut(X_{\Gamma_{\OO}^{1}(\mathfrak{P}_{2})})\cong\mathbb{D}_{10}.\]
 \end{prop}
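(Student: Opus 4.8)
The plan is to show that $\aut(X_{\Gamma_{\OO}^{1}(\mathfrak{P}_{2})}) \cong N\Gamma_{\OO}^{+}/\Gamma_{\OO}^{1}(\mathfrak{P}_{2})$ is a group of order $20$ that is generated by two elements, one of order $10$ and one of order $2$, subject to the dihedral relation already established. From the exact sequence (\ref{exact}), which we rewrite abstractly as
\[
1\longrightarrow\mathbb{Z}/5\mathbb{Z}\longrightarrow\aut(X_{\Gamma_{\OO}^{1}(\mathfrak{P}_{2})})\longrightarrow\mathbb{Z}/2\mathbb{Z}\times\mathbb{Z}/2\mathbb{Z}\longrightarrow1,
\]
the automorphism group has order $20$. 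The element $g=\lambda+1$ has order $10$ in $N\Gamma_{\OO}^{+}/\Gamma_{\OO}^{1}(\mathfrak{P}_{2})$, as was computed above via $g^{2}=\frac{5+\sqrt{5}}{2}\lambda$, so the cyclic subgroup $\langle g\rangle\cong\mathbb{Z}/10\mathbb{Z}$ is an index-$2$ (hence normal) subgroup.

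First I would pin down the element $\Pi_{2}$ as the remaining generator. We have already observed that $\Pi_{2}$ can be taken as $\sqrt{-2}$, so it is of order $2$ in $N\Gamma_{\OO}^{+}$, and its image in $N\Gamma_{\OO}^{+}/\Gamma_{\OO}^{1}$ together with that of $g$ generates the quotient $(\mathbb{Z}/2\mathbb{Z})^{2}$. It remains to verify that $\Pi_{2}$ does \emph{not} lie in the cyclic subgroup $\langle g\rangle$ modulo $\Gamma_{\OO}^{1}(\mathfrak{P}_{2})$; this follows because $\Pi_{2}$ maps to a nontrivial class in $N\Gamma_{\OO}^{+}/\Gamma_{\OO}^{1}$ distinct from the class of $g$, whereas every element of $\langle g\rangle$ reduces modulo $\Gamma_{\OO}^{1}$ to a power of the image of $g$. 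Hence $\langle g,\Pi_{2}\rangle$ has order strictly larger than $10$, and being contained in the group of order $20$ it must equal the whole of $\aut(X_{\Gamma_{\OO}^{1}(\mathfrak{P}_{2})})$.

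Next I would assemble the presentation. Lemma \ref{commutativity_of_g_and_h} gives precisely the relation $\Pi_{2}g\Pi_{2}=g^{-1}$ modulo $\Gamma_{\OO}^{1}(\mathfrak{P}_{2})$, together with $\Pi_{2}^{2}=1$. These are exactly the defining relations of the dihedral group $\mathbb{D}_{10}$ of order $20$, namely $\langle g,\Pi_{2}\mid g^{10}=\Pi_{2}^{2}=1,\ \Pi_{2}g\Pi_{2}^{-1}=g^{-1}\rangle$. Since our group has order $20$, is generated by $g$ and $\Pi_{2}$, and these generators satisfy the dihedral relations, the natural surjection $\mathbb{D}_{10}\twoheadrightarrow\aut(X_{\Gamma_{\OO}^{1}(\mathfrak{P}_{2})})$ is an isomorphism by a counting argument (a surjection between finite groups of equal order is bijective).

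The main obstacle is the verification that $\Pi_{2}\notin\langle g\rangle$ modulo $\Gamma_{\OO}^{1}(\mathfrak{P}_{2})$, i.e.\ that the two generators genuinely produce a group of order $20$ rather than collapsing into the cyclic group $\mathbb{Z}/10\mathbb{Z}$. This is where the distinction between the classes in the quotient $N\Gamma_{\OO}^{+}/\Gamma_{\OO}^{1}\cong(\mathbb{Z}/2\mathbb{Z})^{2}$ does the essential work: $g$ and $\Pi_{2}$ map to independent nontrivial involutions there. Once this independence is secured, the remaining steps are formal group theory, and the only other point requiring care is confirming that $g$ has order exactly $10$ (not $5$) modulo $\Gamma_{\OO}^{1}(\mathfrak{P}_{2})$, which is already contained in the computation $g^{2}=\frac{5+\sqrt{5}}{2}\lambda$ showing $g^{2}$ is a nontrivial power of $\lambda$.
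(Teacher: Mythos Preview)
Your proof is correct and follows essentially the same approach as the paper's own argument: both use the exact sequence to get order $20$, invoke the order-$10$ element $g$ and the involution $\Pi_{2}$ whose images generate $N\Gamma_{\OO}^{+}/\Gamma_{\OO}^{1}$, apply Lemma~\ref{commutativity_of_g_and_h} for the dihedral relation, and conclude by identifying the resulting presentation with $\mathbb{D}_{10}$. Your version is somewhat more explicit in justifying why $\Pi_{2}\notin\langle g\rangle$ and hence why the two elements genuinely generate a group of order $20$, a point the paper leaves to ``the above discussion.''
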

\begin{proof}
By the above discussion, $\aut(X_{\Gamma_{\OO}^{1}(\mathfrak{P}_{2})})$
is of order $20$ and is generated by elements $g$ of order $10$
and $\Pi_{2}$ of order $2$ satisfyng $\Pi_{2} g\Pi_{2}=g^{-1}$.
The only group of order 20 with these relations is $\mathbb{D}_{10}$. 
\end{proof}

\subsection{A fake quadric with automorphism group of order 8.}

We consider $k=\QQ(\sqrt{5})$ and $B=B(k,\pP_{2}\pP_{11})$, the
unique quaternion algebra ramified exactly at the primes $\pP_{2}$
and $\pP_{11}$. Since $2$ is inert and $11$ is decomposed in $k$,
Shimizu's volume formula gives $c_{2}(X_{\Gamma_{\OO}^{1}})=\frac{4}{5\cdot12}(4-1)(11-1)=2$
as the value of the second Chern number of the quotient $X_{\Gamma_{\OO}^{1}}$,
where again $\Gamma_{\OO}^{1}$ is the norm-1 group of a maximal order
in $B$. As before, results of \cite{Shavel78} show that $\Gamma_{\OO}^{1}$
contains only torsion elements of order $2$ and no other torsions
(Here, observe that $2$ is split in $\QQ(\sqrt{-15})$, hence, by \cite{Shavel78}, 
Theorem 4.8 there are no elements of order $3$ in $\Gamma_{\OO}^{1}$, and note that
there are no elements of order $5$ because $11\equiv1\bmod5$ which
implies that $\pP_{11}$ is split in $k(\xi_{5})$ ). Since $\pP_{11}$
is ramified in $B$, there is the unique prime ideal $\mathfrak{P}_{11}$
in $\OO$ such that $\mathfrak{P}_{11}^{2}=\pP_{11}\OO$. Consider
the principal congruence subgroup \[
\Gamma_{\OO}^{1}(\mathfrak{P}_{11})=\{x\in\Gamma_{\OO}^{1}\mid x\equiv1\bmod\mathfrak{P}_{11}\}.\]
 It is a normal subgroup in $\Gamma_{\OO}^{1}$. The quotient $\Gamma_{\OO}^{1}/\Gamma_{\OO}^{1}(\mathfrak{P}_{11})$
is isomorphic to $\OO^{1}/\pm\OO^{1}(\mathfrak{P}_{11})$ because
$-1\notin\OO^{1}(\mathfrak{P}_{11})$. In order to compute the latter
quotient we change over to the localisation at the prime $\pP_{11}$.
Let \[
B_{\pP_{11}}=B\otimes_{k}k_{\pP_{11}}=B\otimes_{k}\QQ_{11}.\]
 It is the unique division quaternion algebra over $\QQ_{11}$. Let
us write $\OO_{\pP_{11}}=\OO\otimes_{\oo_{k}}\ZZ_{11}$ for its maximal
order. As in the previous example let $\widehat{\mathfrak{P}}_{11}$
denote the prime ideal of $\OO_{\pP_{11}}$. We have \[
\OO^{1}/\OO^{1}(\mathfrak{P}_{11})\cong\OO_{\pP_{11}}^{1}/\OO_{\pP_{11}}^{1}(\widehat{\mathfrak{P}}_{11})\]
 by the strong approximation theorem. By C. Riehm's result, \cite{Riehm},
Theorem 7, \[
\OO_{\pP_{11}}^{1}/\OO_{\pP_{11}}^{1}(\widehat{\mathfrak{P}}_{11})\cong\ker((\OO_{\pP_{11}}/\widehat{\mathfrak{P}}_{11})^{\ast}\stackrel{Nr}{\longrightarrow}(\oo_{k_{\pP_{11}}}/\pP_{11})^{\ast})\cong\ker(\FF_{121}^{\ast}\longrightarrow\FF_{11}^{\ast}).\]
 Since $\FF_{121}=\FF_{11}(\xi_{12})$, where $\xi_{12}$ denotes
a primitive twelfth root of unity we conclude that $\OO_{\pP_{11}}^{1}/\OO_{\pP_{11}}^{1}(\widehat{\mathfrak{P}}_{11})$
is isomorphic to $\mu_{12}=\langle\xi_{12}\rangle$. Hence \[
\Gamma_{\OO}^{1}/\Gamma_{\OO}^{1}(\mathfrak{P}_{11})\cong\OO_{\pP_{11}}^{1}/\pm\OO_{\pP_{11}}^{1}(\widehat{\mathfrak{P}}_{11})\cong\mu_{6}=\langle\xi_{6}\rangle.\]
 Let us now define an intermediate group \[
\Gamma=\{x\in\Gamma_{\OO}^{1}\mid x\bmod\mathfrak{P}_{11}\in\langle\xi_{6}^{2}\rangle\subset\mu_{6}\}.\]
 $\Gamma<\Gamma_{\OO}^{1}$ is a subgroup of index $2$, hence $c_{2}(X_{\Gamma})=4$.
Moreover, $\Gamma$ is torsion-free since it can not contain elements
of order $2$. For if an order-two element $x$ is in $\Gamma$, then
its image $x\bmod\mathfrak{P}_{11}$ in $\Gamma_{\OO}^{1}/\Gamma_{\OO}^{1}(\mathfrak{P}_{11})$
lies in a cyclic group $\langle\xi_{6}^{2}\rangle$ of order three,
hence it must be the identity. But this means that $x$ is in $\Gamma_{\OO}^{1}(\mathfrak{P}_{11})$.
On the other hand $\Gamma_{\OO}^{1}(\mathfrak{P}_{11})$ is torsion-free
because it embeds in a pro-11 group $\OO_{\pP_{11}}^{1}(\widehat{\mathfrak{P}}_{11})/\pm1$.
This contradicts the assumption on $x$. All this shows that $X_{\Gamma}$
is a fake quadric. 
\begin{prop}
Let $N\Gamma_{\OO}^{+}$ be defined as in (\ref{max}). Then $N\Gamma_{\OO}^{+}$
is the normaliser of $\Gamma$ and $N\Gamma_{\OO}^{+}/\Gamma$ is
isomorphic to $\mathbb{D}_{4}$. \end{prop}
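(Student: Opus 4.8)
The plan is to work along the chain $\Gamma\subset\Gamma_{\OO}^{1}\subset N\Gamma_{\OO}^{+}$ and to exploit the local structure at the ramified prime $\pP_{11}$, exactly as in the $\DD_{10}$ example of Subsection \ref{ord10}. First I would check that $N\Gamma_{\OO}^{+}$ normalises $\Gamma$. Since every $x\in N\Gamma_{\OO}^{+}$ satisfies $x\OO x^{-1}=\OO$ and $\mathfrak{P}_{11}$ is the unique two-sided prime ideal of $\OO$ with $\Nrd(\mathfrak{P}_{11})=\pP_{11}$, conjugation by $x$ preserves $\mathfrak{P}_{11}$; the argument given for $\Gamma_{\OO}^{1}(\mathfrak{P}_{2})$ in Subsection \ref{ord10} then applies verbatim to show that $N\Gamma_{\OO}^{+}$ normalises the principal congruence subgroup $\Gamma_{\OO}^{1}(\mathfrak{P}_{11})$. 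As $\Gamma_{\OO}^{1}$ is also normal in $N\Gamma_{\OO}^{+}$, conjugation induces an action of $N\Gamma_{\OO}^{+}$ by group automorphisms on $\Gamma_{\OO}^{1}/\Gamma_{\OO}^{1}(\mathfrak{P}_{11})\cong\mu_{6}$. A cyclic group of order $6$ has a unique, hence characteristic, subgroup of order $3$, and its preimage is precisely $\Gamma$; thus $\Gamma$ is stable, i.e. $\Gamma\triangleleft N\Gamma_{\OO}^{+}$.

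To see that $N\Gamma_{\OO}^{+}$ is the full normaliser, let $N$ be the normaliser of $\Gamma$ in $\pgl_{2}^{+}(\RR)\times\pgl_{2}^{+}(\RR)$. Because $\Gamma$ is an irreducible lattice it is Zariski dense, so the identity component of $N$ centralises $\Gamma$, hence centralises the whole (center-free) group and is therefore trivial; thus $N$ is discrete. Being a discrete group containing the cocompact lattice $\Gamma$, $N$ is itself a lattice in which $\Gamma$ has finite index, and by the first step $N\supseteq N\Gamma_{\OO}^{+}$. Since $N\Gamma_{\OO}^{+}$ is a maximal lattice (see (\ref{max})), no lattice strictly contains it, whence $N=N\Gamma_{\OO}^{+}$.

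For the isomorphism type I would first count orders: $[N\Gamma_{\OO}^{+}:\Gamma_{\OO}^{1}]=4$, the elementary abelian group $N\Gamma_{\OO}^{+}/\Gamma_{\OO}^{1}=\langle[\pP_{2}],[\pP_{11}]\rangle$ by \cite{Shavel78}, and $[\Gamma_{\OO}^{1}:\Gamma]=2$, so $Q:=N\Gamma_{\OO}^{+}/\Gamma$ has order $8$ and fits in an extension $1\to\Gamma_{\OO}^{1}/\Gamma\to Q\to N\Gamma_{\OO}^{+}/\Gamma_{\OO}^{1}\to1$ with kernel $\ZZ/2\ZZ$ and quotient $(\ZZ/2\ZZ)^{2}$. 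Writing $\iota$ for an order-$2$ element of $\Gamma_{\OO}^{1}$, its image in $\mu_{6}$ has order $2$, hence lies outside the order-$3$ subgroup, so $\bar\iota$ generates $\Gamma_{\OO}^{1}/\Gamma$. Localising at $\pP_{11}$, where $B_{\pP_{11}}=L_{\pP_{11}}\oplus\Pi_{11}L_{\pP_{11}}$ with $t\Pi_{11}=\Pi_{11}\bar{t}$ and $\bar{t}=t^{11}$ on the residue field $\FF_{121}$, I would read off the conjugation actions on $\mu_{6}$: Frobenius acts as $t\mapsto t^{11}=t^{-1}$, so $\Pi_{11}$ inverts $\mu_{6}$, while $\Pi_{2}\in\OO_{\pP_{11}}^{\ast}$ reduces to an inner automorphism of the commutative field $\FF_{121}$ and therefore acts trivially. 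In particular $\bar\iota$ is central in $Q$.

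The decisive point, and the main obstacle, is to compute the commutator $[\bar\Pi_{2},\bar\Pi_{11}]$ and the squares $\bar\Pi_{2}^{2},\bar\Pi_{11}^{2}$ inside $\langle\bar\iota\rangle$, which needs the explicit arithmetic of $\OO$ modulo $\mathfrak{P}_{11}$ (as in the PARI verification of Subsection \ref{ord10}). I expect $[\bar\Pi_{2},\bar\Pi_{11}]=\bar\iota$, so that $Q$ is non-abelian, leaving only $\DD_{4}$ and the quaternion group of order $8$. To exclude the latter I would exhibit a second involution besides the central $\bar\iota$: concretely, I would verify that one of the cosets $\bar\Pi_{2},\bar\Pi_{11},\bar\Pi_{2}\bar\Pi_{11}$ consists of involutions, equivalently that the corresponding generator is represented by a pure quaternion whose square lies in $k^{\ast}$ (as $\sqrt{-2}$ was used before, provided the associated imaginary quadratic field is non-split at both $\pP_{2}$ and $\pP_{11}$). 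Since the quaternion group has a single involution while $\DD_{4}$ has several, one non-central involution forces $Q\cong\DD_{4}$; equivalently, this pins down an order-$4$ element $a$ with $a^{2}=\bar\iota$ and an involution $b$ with $bab^{-1}=a^{-1}$, giving the dihedral presentation directly.
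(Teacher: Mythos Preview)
Your outline for normality of $\Gamma$ in $N\Gamma_{\OO}^{+}$ (via the characteristic order-$3$ subgroup of $\mu_{6}$) and for $N\Gamma_{\OO}^{+}$ being the full normaliser is correct and agrees with the paper. The extension $1\to\ZZ/2\ZZ\to Q\to(\ZZ/2\ZZ)^{2}\to1$ and the centrality of $\bar\iota$ are also set up correctly.

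The gap is precisely where you flag it: you leave the commutator $[\bar\Pi_{2},\bar\Pi_{11}]$ and the squares $\bar\Pi_{2}^{2},\bar\Pi_{11}^{2}$ as computations ``needing explicit arithmetic'' and only \emph{expect} the non-abelian outcome. Without these you cannot separate $\DD_{4}$ from $\ZZ/4\ZZ\times\ZZ/2\ZZ$ or $(\ZZ/2\ZZ)^{3}$, let alone from $Q_{8}$. Your localisation at $\pP_{11}$ only sees the induced action on $\mu_{6}$, and since the kernel $\Gamma_{\OO}^{1}/\Gamma$ has order $2$, inversion is trivial there; so this gives centrality of $\bar\iota$ but no information on the commutator.

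The paper bypasses the difficulty by a clever choice of the lift of $[\pP_{2}]$. Since $\Gamma_{\OO}^{1}$ has $2$-torsion there is $\lambda\in\OO^{1}$ with $\lambda^{2}=-1$; set $g=1+\lambda$. Then $\Nrd(g)=2$, so $g\in N\Gamma_{\OO}^{+}$ represents $[\pP_{2}]$, and $g^{2}=2\lambda$, so projectively $g^{2}=\lambda$, i.e.\ $\bar g^{2}=\bar\iota$ and $\bar g$ has order exactly $4$. The square you could not compute is thus determined for free by choosing the representative well. For the other generator one takes $\Pi_{11}=\sqrt{-\pi_{11}}$ (after the PARI check that $k(\sqrt{-\pi_{11}})$ is non-split at both ramified primes), an honest involution. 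The dihedral relation $\Pi_{11}g\Pi_{11}=g^{-1}$ then follows exactly as in Lemma~\ref{commutativity_of_g_and_h}: locally $g=1+\lambda\in L_{\pP_{11}}=\QQ_{11}(\xi_{12})$, conjugation by the uniformiser $\Pi_{11}$ is Galois conjugation, and $\bar g=g^{-1}$ in $N\Gamma_{\OO}^{+}$ since $g\bar g=\Nrd(g)\in k^{\ast}$. Since $g\neq g^{-1}$ modulo $\Gamma$ (else $g^{2}$ would be torsion in the torsion-free $\Gamma$), this forces $Q\cong\DD_{4}$ with no case analysis left.
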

\begin{proof}
As a subgroup of index two in $\Gamma_{\OO}^{1}$ the group $\Gamma$
is normal in $\Gamma_{\OO}^{1}$. On the other hand, for the same
reason as in the previous example, $\Gamma_{\OO}^{1}(\mathfrak{P}_{11})$
as well as $\Gamma_{\OO}^{1}$ is normal subgroup in $N\Gamma_{\OO}^{+}$.
This already implies that $\Gamma$ is normal in $N\Gamma_{\OO}^{+}$
because any conjugate of $\Gamma$ will be a subgroup between $\Gamma_{\OO}^{1}(\mathfrak{P}_{11})$
and $\Gamma_{\OO}^{1}$ of index $2$ in $\Gamma_{\OO}^{1}$. There
is only one such group, namely $\Gamma$, since $\Gamma_{\OO}^{1}/\Gamma_{\OO}^{1}(\mathfrak{P}_{11})\cong\ZZ/6\ZZ$.
Similar exact sequence as (\ref{exact}) now shows that $\aut(X_{\Gamma})$
is an extension of $\ZZ/2\ZZ$ by the Klein's four group. Since the 2-torsions in $\Gamma_{\OO}^1$ come from 
embeddings of fourth root of unity into $\OO$ there is $\lambda\in\OO^{1}$ such that $\lambda^{2}=-1$. Let
$g=\lambda+1$. Then, as $\Trd(\lambda)=0$, we have $\Nrd(g)=(\lambda+1)(\overline{\lambda}+1)=2$
and also $g^{2}=(\lambda+1)^{2}=2\lambda$ which implies that $g$
defines an element of order $4$ in $N\Gamma_{\OO}^{+}$ and hence
an element of order $4$ in $N\Gamma_{\OO}^{+}/\Gamma$. Moreover,
the image of $g$ in $N\Gamma_{\OO}^{+}/\Gamma_{\OO}^{1}$ is not
trivial. Since both prime divisors $2$ and $\pi_{11}$ of the reduced
discriminant do not split in $k(\sqrt{-\pi_{11}})$ (as can be checked
using PARI for instance), the element $\Pi_{11}=\sqrt{-\pi_{11}}$
is in $B$ and moreover $\Pi_{11}$ defines an element of $N\Gamma_{\OO}^{+}$
of order $2$ such that the images of $\Pi_{11}$ and $g$ in $N\Gamma_{\OO}^{+}/\Gamma_{\OO}^{1}$
generate this group. 
Same argument as in Lemma \ref{commutativity_of_g_and_h} gives a
relation between $\Pi_{11}$ and $g$: Consider $\Pi_{11}$ as the
generator of the prime ideal $\mathfrak{P}_{11}$. Locally, $B_{\pP_{11}}$
can be written as $B_{\pP_{11}}=L_{\pP_{11}}\oplus\Pi_{11}L_{\pP_{11}}$,
where $L_{\pP_{11}}=k_{\pP_{11}}(\xi_{12})$ is the unique unramified
quadratic extension of $k_{\pP_{11}}\cong\QQ_{11}$ with the multiplication
rule $t\Pi_{11}=\Pi_{11}\overline{t}$ for all $t\in L_{\pP_{11}}$.
The element $g$ is in $L_{\pP_{11}}$, namely $g=1+\xi_{12}^{3}$.
Then $g\Pi_{11}=\Pi_{11}\overline{g}=h(1+\overline{\xi_{12}}^{3})=(1+\xi_{12}^{9})$.
In $N\Gamma_{\OO}^{+}$ the relations $\overline{g}=g^{-1}$ and $\Pi_{11}^2=1$ hold, 
hence $\Pi_{11}g\Pi_{11}=g^{-1}$ in $N\Gamma_{\OO}^{+}$. Also $g\neq g^{-1}$ modulo $\Gamma$, since
otherwise $g^{2}$ would be in $\Gamma$ which is not possible because
$g^{2}$ is torsion and $\Gamma$ torsion-free. $N\Gamma_{\OO}^{+}/\Gamma$
is isomorphic to $\mathbb{D}_{4}$ which is the only group of order
$8$ generated by two elements $\Pi_{11}$ of order $2$ and $g$
of order $4$ with $h\neq g^{2}$ and $\Pi_{11} g\Pi_{11}=g^{-1}$. \end{proof}
\begin{rem}
Considering $k=\mathbb{Q}(\sqrt{13})$, the quaternion algebra $B=B(k,\pP_{2}\pP_{3})$
and $\Gamma=\Gamma_{\OO}^{1}(\mathfrak{P}_{3})$, the arguments as
in the examples before will show that $X_{\Gamma_{\OO}^{1}(\mathfrak{P}_{3})}$
is a fake quadric whose automorphism group is isomorphic to $\mathbb{D}_{4}$. 
\end{rem}

\subsection{A fake quadric with an automorphism group \boldmath $\mathbb{D}_{6}$.}

This time we consider the quadratic field $k=\QQ(\sqrt{2})$ and the
quaternion algebra $B=B(k,\pP_{2}\pP_{3})$. The norm-1 group $\Gamma_{\OO}^{1}$
of a maximal order in $B$ contains torsion elements of order $3$,
but no elements of order $2$, because $\pP_{3}$ is decomposed in
$k(\sqrt{-1})$. The second Chern number of the quotient $X_{\Gamma_{\OO}^{1}}$
is $c_{2}(X_{\Gamma_{\OO}^{1}})=(9-1)/6=4/3$. Let $\Gamma_{\OO}^{1}(\mathfrak{P}_{2})$
be the principal congruence subgroup corresponding to the prime ideal
$\mathfrak{P}_{2}\subset\OO$, defined by the relation $\mathfrak{P}_{2}^{2}=\pP_{2}\OO$.
Again by Riehm's theorem and with arguments as in Section \ref{ord10},
$\Gamma_{\OO}^{1}(\mathfrak{P}_{2})$ is torsion free normal subgroup
in $\Gamma_{\OO}^{1}$ of index $3$, hence $X_{\Gamma_{\OO}^{1}(\mathfrak{P}_{2})}$
is a fake quadric. The automorphism group $\aut(X_{\Gamma_{\OO}^{1}(\mathfrak{P}_{2})})$
is isomorphic to the factor group \[
N\Gamma_{\OO}^{+}/\Gamma_{\OO}^{1}(\mathfrak{P}_{2}).\]
 which is an extension of $\Gamma_{\OO}^{1}/\Gamma_{\OO}^{1}(\mathfrak{P}_{2})\cong\ZZ/3\ZZ$
by $N\Gamma_{\OO}^{+}/\Gamma_{\OO}^{1}\cong\ZZ/2\ZZ\times\ZZ/2\ZZ$.
\begin{prop}
We have $\aut(X_{\Gamma_{\OO}^{1}(\mathfrak{P}_{2})})\cong\mathbb{D}_{6}$. \end{prop}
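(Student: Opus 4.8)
The plan is to mirror the strategy already used for $\mathbb{D}_{10}$ and $\mathbb{D}_4$. As noted just above, $\aut(X_{\Gamma_{\OO}^{1}(\mathfrak{P}_2)})=N\Gamma_{\OO}^{+}/\Gamma_{\OO}^{1}(\mathfrak{P}_2)$ is an extension of $\Gamma_{\OO}^{1}/\Gamma_{\OO}^{1}(\mathfrak{P}_2)\cong\ZZ/3\ZZ$ by $N\Gamma_{\OO}^{+}/\Gamma_{\OO}^{1}\cong(\ZZ/2\ZZ)^{2}$, so it has order $12$ and it suffices to exhibit two generators satisfying the dihedral relation. First I would realize the cyclic part. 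Lifting the order-$3$ torsion to a sixth root of unity $\lambda\in\OO^{1}$ with $\lambda^{3}=-1$, so that $\lambda^{2}-\lambda+1=0$ and $\Nrd(\lambda)=\Trd(\lambda)=1$, I set $g=\lambda+1$. Then $\Nrd(g)=\Nrd(\lambda)+\Trd(\lambda)+1=3$ and $g^{2}=\lambda^{2}+2\lambda+1=3\lambda$. Since $3$ is inert in $k$ it is a totally positive generator of $\pP_3$ dividing $d_{B}$, so $g$ defines an element of $N\Gamma_{\OO}^{+}$ by (\ref{max2}); moreover $g^{2}\equiv\lambda$ and $g^{6}\equiv\lambda^{3}=-1\equiv1$ in $B^{\ast}/k^{\ast}$, so $g$ has order $6$. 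Because $g^{2}$ and $g^{3}$ are torsion while $\Gamma_{\OO}^{1}(\mathfrak{P}_2)$ is torsion-free, $g$ still has order $6$ modulo $\Gamma_{\OO}^{1}(\mathfrak{P}_2)$, and its image in $N\Gamma_{\OO}^{+}/\Gamma_{\OO}^{1}$ is the nontrivial class $[\pP_3]$.

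Next I would produce the reflection. Here the naive candidate $\sqrt{-2}$ that served over $\QQ(\sqrt5)$ is \emph{not} available: since $\sqrt2\in k$ one has $k(\sqrt{-2})=k(\sqrt{-1})$, and $\pP_3$ is decomposed in $k(\sqrt{-1})$, so $k(\sqrt{-2})$ does not embed in $B$. Instead, following the $\mathbb{D}_4$ computation, I would fix a totally positive generator $\pi_2$ of $\pP_2$ (for instance $2+\sqrt2$) and put $\Pi_2=\sqrt{-\pi_2}$; then $\pP_2$ ramifies in $k(\sqrt{-\pi_2})$, and it remains to check (a finite residue-field test, as in the PARI verifications above) that $\pP_3$ is non-split there, which guarantees $k(\sqrt{-\pi_2})\hookrightarrow B$ by Shavel's criterion. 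The element $\Pi_2$ then generates $\mathfrak{P}_2$, satisfies $\Pi_2^{2}=-\pi_2\in k^{\ast}$, hence has order $2$ in $N\Gamma_{\OO}^{+}$, and its image in $N\Gamma_{\OO}^{+}/\Gamma_{\OO}^{1}$ is $[\pP_2]$. Since $g$ and $\Pi_2$ together surject onto $(\ZZ/2\ZZ)^{2}=\langle[\pP_2],[\pP_3]\rangle$ while $\langle g\rangle$ already contains the kernel $\ZZ/3\ZZ$, they generate the full group of order $12$.

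Finally I would establish the relation exactly as in Lemma \ref{commutativity_of_g_and_h}. Localising at $\pP_2$, write $B_{\pP_2}=L_{\pP_2}\oplus\Pi_2 L_{\pP_2}$ with $t\Pi_2=\Pi_2\overline{t}$ for $t\in L_{\pP_2}$, where $L_{\pP_2}$ is the unramified quadratic extension of $k_{\pP_2}$. Because $\lambda$ is a primitive sixth root of unity, $k_{\pP_2}(\lambda)$ is unramified over $k_{\pP_2}$ (the residue extension $\FF_2\subset\FF_4$ acquires the cube roots of unity), so $L_{\pP_2}=k_{\pP_2}(\lambda)$ and $g=\lambda+1\in L_{\pP_2}$. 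Hence $g\Pi_2=\Pi_2\overline{g}$, and since $g\overline{g}=\Nrd(g)=3\in k^{\ast}$ we get $\overline{g}=g^{-1}$ in $B^{\ast}/k^{\ast}$; as $\Pi_2^{2}\in k^{\ast}$ this yields $\Pi_2 g\Pi_2=g^{-1}$ in $N\Gamma_{\OO}^{+}$. The same torsion-freeness argument shows $g\neq g^{-1}$ modulo $\Gamma_{\OO}^{1}(\mathfrak{P}_2)$, so the group of order $12$ generated by an order-$6$ element $g$ and an order-$2$ element $\Pi_2$ with $\Pi_2 g\Pi_2=g^{-1}$ is forced to be $\mathbb{D}_6$.

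I expect the only genuine obstacle to be the arithmetic input for $\Pi_2$: verifying that a suitable field $k(\sqrt{-\pi_2})$ embeds in $B$, equivalently that both $\pP_2$ and $\pP_3$ are non-split there. This is precisely the point where the present case diverges from the $\QQ(\sqrt5)$ examples, since the obvious $\sqrt{-2}$ fails over $\QQ(\sqrt2)$; everything else is the local-global bookkeeping already carried out in the previous propositions.
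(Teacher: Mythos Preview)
Your proposal is correct and follows essentially the same route as the paper's proof: the same element $g=\lambda+1$ built from a primitive sixth root of unity $\lambda\in\OO^{1}$, the same reflection $\Pi_2=\sqrt{-\pi_2}$ with $\pi_2=2+\sqrt{2}$, and the same local argument at $\pP_2$ to obtain $\Pi_2 g\Pi_2=g^{-1}$. Your additional remark explaining why the naive choice $\sqrt{-2}$ fails over $\QQ(\sqrt{2})$ (since $k(\sqrt{-2})=k(\sqrt{-1})$ and $\pP_3$ splits there) is a nice clarification that the paper does not make explicit; the arithmetic verification you flag as the only obstacle is simply asserted in the paper.
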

\begin{proof}
Let $\lambda\in\OO^{1}$ be an element with $\lambda^{3}=-1$ and
$g=\lambda+1$. Such $\lambda$ exists since $\Gamma_{\OO}^{1}$ contains
$3$-torsions. We can take $\pm\lambda$ to be the generator of $\Gamma_{\OO}^{1}/\Gamma_{\OO}^{1}(\mathfrak{P}_{2})$.
Since $\Trd(\lambda)=1$, we have $\Nrd(g)=3$ which implies that
$g$ defines an element in $N\Gamma_{\OO}^{+}$. Additionally $g^{2}=\lambda^{2}+2\lambda+1=3\lambda$
which means that $g$ has order $6$ considered as an element of $N\Gamma_{\OO}^{+}$.
The totally positive element $\pi_{2}=2+\sqrt{2}\in k$ generates
$\pP_{2}$ and since neither $\pi_{3}=3$ nor $\pi_{2}$ are split
in $k(\sqrt{-\pi_{2}})$, $\Pi_{2}=\sqrt{-\pi_{2}}$ lies in $B$
and defines an element in $N\Gamma_{\OO}^{+}$ of order $2$ such
that the classes of $g$ and $\Pi_{2}$ in $N\Gamma_{\OO}^{+}/\Gamma_{\OO}^{1}$
generate this group. In particular, $\Pi_{2}$ is a generator of $\mathfrak{P}_{2}$.
Locally $B_{\pP_{2}}=L_{\pP_{2}}\oplus\Pi_{2}L_{\pP_{2}}$, where
$L_{\pP_{2}}=\QQ_{2}(\xi_{6})$ is the unramified quadratic extension
of $k_{\pP_{2}}\cong\QQ_{2}$. As in previous examples, $g$ lies
in $L_{\pP_{2}}$ and $\Pi_{2}g\Pi_{2}=\overline{g}=g^{-1}$
in $N\Gamma_{\OO}^{+}$. This gives a relation $\Pi_{2} g\Pi_{2}=h^{-1}$
in $N\Gamma_{\OO}^{+}/\Gamma_{\OO}^{1}(\mathfrak{P}_{2})$. As $\Pi_{2}$
is not a power of $g$, the finite group generated by $g$ and $\Pi_{2}$
is isomorphic to $\mathbb{D}_{6}$.
\end{proof}

\subsection{Automorphism groups of order 16 and 24}

\label{other} There are more examples of quaternionic fake quadrics
with a non-trivial automorphism group. For instance, all examples
in Shavel's paper have $\ZZ/2\ZZ$ or $(\ZZ/2\ZZ)^{2}$ as the full
group of automorphisms. As in previous examples we show 
\begin{prop}
Let $B(\QQ(\sqrt{2}),\pP_{2},\pP_{7})$ be the indefinite quaternion
algebra over $k=\QQ(\sqrt{2})$ with reduced discriminant $d_{B}=\pP_{2}\pP_{7}$
and $\Gamma_{\OO}^{1}(\mathfrak{P}_{7})$ the congruence subgroup
in $\Gamma_{\OO}^{1}$ corresponding to a maximal order $\OO$ in
$B$ with respect to the prime ideal $\mathfrak{P}_{7}$ of $\OO$
lying over the ramified prime $\pP_{7}$. Then $X_{\Gamma_{\OO}^{1}(\mathfrak{P}_{7})}$
is a fake quadric with the automorphism group $\aut(X_{\Gamma_{\OO}^{1}(\mathfrak{P}_{7})})\cong\mathbb{D}_{8}$.\end{prop}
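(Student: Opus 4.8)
The plan is to follow the now-familiar template established in the preceding examples, since the statement asserts the same type of structural result: we must identify a torsion-free congruence subgroup, verify it gives a fake quadric, and then compute its normalizer quotient as $\mathbb{D}_{8}$. First I would set up the arithmetic data: with $k=\QQ(\sqrt{2})$ and $B=B(k,\pP_{2}\pP_{7})$, I compute $c_{2}(X_{\Gamma_{\OO}^{1}})$ via Shimizu's volume formula, noting that $2$ is ramified and $7$ is decomposed in $k$, and then determine the torsion in $\Gamma_{\OO}^{1}$ using Shavel's criteria (Theorem 4.8 of \cite{Shavel78}), which should reveal torsion of order dividing the relevant local factor at $\pP_{7}$.

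Next I would localize at the ramified prime $\pP_{7}$ to compute the quotient $\Gamma_{\OO}^{1}/\Gamma_{\OO}^{1}(\mathfrak{P}_{7})$. Since $7$ decomposes in $k$, the local division algebra $B_{\pP_{7}}$ sits over $\QQ_{7}$, and Riehm's theorem (\cite{Riehm}, Theorem 7) identifies the quotient $\OO_{\pP_{7}}^{1}/\OO_{\pP_{7}}^{1}(\widehat{\mathfrak{P}}_{7})$ with $\ker(\FF_{49}^{\ast}\to\FF_{7}^{\ast})$, which is cyclic of order $8$; dividing by $\pm1$ gives $\mu_{4}$. Because a cyclic group of order $8$ has a unique subgroup of each order, I expect that taking $\Gamma_{\OO}^{1}(\mathfrak{P}_{7})$ itself (or possibly an index-$2$ intermediate group, depending on the precise Chern number bookkeeping to land at $c_{2}=4$) yields a torsion-free lattice: the relevant congruence subgroups embed into a pro-$7$ group and hence cannot contain the order-$2$ torsion of $\Gamma_{\OO}^{1}$. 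This verifies $X_{\Gamma}$ is a smooth fake quadric.

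The automorphism group computation then proceeds exactly as in the $\mathbb{D}_{10}$, $\mathbb{D}_{4}$, and $\mathbb{D}_{6}$ cases. I would exhibit an element $g=\lambda+1$ built from a torsion unit $\lambda\in\OO^{1}$, compute $\Nrd(g)$ to confirm $g\in N\Gamma_{\OO}^{+}$ via the description \eqref{max2}, and check that $g^{2}$ is a scalar multiple of $\lambda$, forcing $g$ to have order $8$ in $N\Gamma_{\OO}^{+}/\Gamma$. I would then produce the reflection $\Pi_{7}=\sqrt{-\pi_{7}}$, verifying (using PARI, as the authors do elsewhere) that neither ramified prime splits in $k(\sqrt{-\pi_{7}})$ so that $\Pi_{7}\in B$ defines an order-$2$ element whose image together with that of $g$ generates $N\Gamma_{\OO}^{+}/\Gamma_{\OO}^{1}$. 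The dihedral relation $\Pi_{7}g\Pi_{7}=g^{-1}$ follows from the local structure $B_{\pP_{7}}=L_{\pP_{7}}\oplus\Pi_{7}L_{\pP_{7}}$ together with the multiplication rule $t\Pi_{7}=\Pi_{7}\overline{t}$ and the fact that $\overline{g}=g^{-1}$ in the projectivized group.

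The main obstacle I anticipate is the bookkeeping that pins down the correct intermediate group and the exact order of $g$. To land on $\mathbb{D}_{8}$ (order $16$) rather than a larger or smaller dihedral group, I need the cyclic part $N\Gamma_{\OO}^{+}/\Gamma$ restricted to the factor generated by $g$ to have order precisely $8$, which requires the local torsion computation at $\pP_{7}$ to yield exactly $\mu_{8}$ upstairs (before quotienting by $\pm1$ and before any index-$2$ passage). I must also confirm $g\neq g^{-1}$ modulo $\Gamma$ — otherwise $g^{2}\in\Gamma$, contradicting torsion-freeness as in the earlier lemmas — and that $\Pi_{7}$ is genuinely not a power of $g$, so that the group is the full dihedral $\mathbb{D}_{8}$ and not a cyclic group. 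Once these local orders are nailed down, the identification of the group of order $16$ generated by an order-$8$ element and an order-$2$ element inverting it as $\mathbb{D}_{8}$ is purely a matter of group presentation.
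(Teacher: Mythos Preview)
Your approach matches the paper's proof almost exactly: Riehm's theorem for the index, Shavel's criterion for the torsion, the element $g=\lambda+1$ for the cyclic part, $\Pi_{7}=\sqrt{-\pi_{7}}$ for the reflection, and the local splitting $B_{\pP_{7}}=L_{\pP_{7}}\oplus\Pi_{7}L_{\pP_{7}}$ for the dihedral relation. Two points you left hedged are in fact the only places where care is needed, so let me pin them down.

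First, no intermediate group is required: here $c_{2}(X_{\Gamma_{\OO}^{1}})=1$ (Shimizu's formula with $2$ ramified and $7$ split in $k$), and Riehm gives $\Gamma_{\OO}^{1}/\Gamma_{\OO}^{1}(\mathfrak{P}_{7})\cong\ZZ/4\ZZ$, so $c_{2}(X_{\Gamma_{\OO}^{1}(\mathfrak{P}_{7})})=4$ on the nose. Second, the torsion in $\Gamma_{\OO}^{1}$ is of order $4$, not $2$: Shavel's criterion shows $\OO$ contains a primitive \emph{eighth} root of unity $\lambda$ (this uses that $k(\zeta_{8})=k(i)$ since $\sqrt{2}\in k$), satisfying $\lambda^{2}-\sqrt{2}\lambda+1=0$. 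Then $\Nrd(g)=2+\sqrt{2}$, a totally positive generator of $\pP_{2}$, and $g^{2}=(2+\sqrt{2})\lambda$, so $g$ has order exactly $8$ in $N\Gamma_{\OO}^{+}$; since $\lambda$ generates the quotient $\ZZ/4\ZZ$, this order survives modulo $\Gamma_{\OO}^{1}(\mathfrak{P}_{7})$. The paper distinguishes $\Pi_{7}$ from powers of $g$ by observing that their reduced norms lie over different primes, which is slightly cleaner than your torsion argument.
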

\begin{proof}
The proof goes along the same lines as in the examples before. By
Riehm's Theorem, $\Gamma_{\OO}^{1}/\Gamma_{\OO}^{1}(\mathfrak{P}_{7})\cong\ZZ/4\ZZ$
and we obtain $c_{2}(X_{\Gamma_{\OO}^{1}(\mathfrak{P}_{7})})=4$ by
Shimizu's formula. By Shavel's criterion for the existence of torsions,
we find that the maximal order $\OO$ contains a primitive eighth
root of unity $\lambda$ which leads to an element of order $4$ in
$\Gamma_{\OO}^{1}$. We can take $\lambda$ as a generator of this
quotient. As in the examples before take $g=1+\lambda$. Then, as
$\lambda$ satisfies $\lambda^{2}-\sqrt{2}\lambda+1=0$ over $k$,
$\Nrd(g)=\Nrd(\lambda+1)=2+\sqrt{2}$, hence $g$ defines an element
in $N\Gamma_{\OO}^{+}$. We have $g^{2}=\lambda^{2}+2\lambda+1=\sqrt{2}\lambda+2\lambda=(2+\sqrt{2})\lambda$.
Hence, $g$ is an element of order $8$ in $N\Gamma_{\OO}^{+}$ and
its image in $N\Gamma_{\OO}^{+}/\Gamma_{\OO}^{1}$ is not trivial.
The rational prime $7$ is split in $k$, hence, there are two possible
choices of $\pP_{7}$. Fix a prime $\pP_{7}=\langle\pi_{7}\rangle$
($\pi_{7}=3+\sqrt{2}$ say). Both $\pi_{7}$ as well as $\pi_{2}$
are ramified in $k(\sqrt{-\pi_{7}})$, hence $\sqrt{-\pi_{7}}\in B$
defines an element $\Pi_{7}\in B$ which defines an order-$2$ element
in $N\Gamma_{\OO}^{+}$. As in the previous examples we have $\Pi_{7} g\Pi_{7}=\overline{g}$
because locally in $B_{\pP_{7}}$, $\Pi_{7}=\sqrt{-\pi_{7}}$ generates
the unique prime ideal of the maximal order $\mathcal{O}_{\pP_{7}}$
and $g$ lies in the unramified quadratic extension $L_{\pP_{7}}=\QQ_{7}(\xi_{8})$.
This gives a relation $\Pi_7g\Pi_7=g^{-1}$ in $N\Gamma_{\OO}^{+}/\Gamma_{\OO}^{1}(\mathfrak{P}_{7})$.
Also $\Pi_{7}$ is not a power of $g$ modulo $\Gamma_{\OO}^{1}(\mathfrak{P}_{7})$
since the reduced norms of $\Pi_{7}$ and $g$ are different primes.
The only group of order $16$ with these relations is $\mathbb{D}_{8}$.
\end{proof}
Let us finally sketch the construction of a fake quadric with an automorphism
group of order $24$. 
\begin{prop}
Let $B(\QQ(\sqrt{3}),\pP_{2},\pP_{3})$ be the indefinite quaternion
algebra over $k=\QQ(\sqrt{3})$ ramified over the prime ideals $\pP_{2}$
and $\pP_{3}$ and let $\Gamma_{\OO}^{1}(\mathfrak{P}_{2}\mathfrak{P}_{3})\triangleleft\Gamma_{\OO}^{1}$
be the principal congruence subgroup with respect to the principal
ideal $\mathfrak{P}_{2}\mathfrak{P}_{3}$ of a maximal order $\OO\subset B$
lying over $\pP_{2}\pP_{3}$. Then $X_{\Gamma_{\OO}^{1}(\mathfrak{P}_{2}\mathfrak{P}_{3})}$
is a fake quadric with $|\aut(X_{\Gamma_{\OO}^{1}(\mathfrak{P}_{2}\mathfrak{P}_{3})})|=24$.
$\aut(X_{\Gamma_{\OO}^{1}(\mathfrak{P}_{2}\mathfrak{P}_{3})})$ contains
a cyclic subgroup of order $12$ \end{prop}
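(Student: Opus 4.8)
The plan is to follow the template established in the previous subsections: first show that $X_{\Gamma_{\OO}^{1}(\mathfrak{P}_{2}\mathfrak{P}_{3})}$ is a fake quadric, then compute $\aut(X_{\Gamma_{\OO}^{1}(\mathfrak{P}_{2}\mathfrak{P}_{3})})=N\Gamma_{\OO}^{+}/\Gamma_{\OO}^{1}(\mathfrak{P}_{2}\mathfrak{P}_{3})$ through an exact sequence, and finally exhibit an order-$12$ element. Since both $2$ and $3$ ramify in $k=\QQ(\sqrt{3})$, the residue fields of $\pP_{2}$ and $\pP_{3}$ are $\FF_{2}$ and $\FF_{3}$, so Shimizu's volume formula gives
\[
c_{2}(X_{\Gamma_{\OO}^{1}})=2\zeta_{k}(-1)(\Nrd(\pP_{2})-1)(\Nrd(\pP_{3})-1)=2\cdot\frac{1}{6}\cdot(2-1)(3-1)=\frac{2}{3},
\]
using $\zeta_{k}(-1)=\frac{1}{6}$. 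I would then establish $[\Gamma_{\OO}^{1}:\Gamma_{\OO}^{1}(\mathfrak{P}_{2}\mathfrak{P}_{3})]=6$, so that $c_{2}(X_{\Gamma_{\OO}^{1}(\mathfrak{P}_{2}\mathfrak{P}_{3})})=6\cdot\frac{2}{3}=4$; once torsion-freeness (hence smoothness) is verified, the criterion of Section \ref{sec:generalities} shows $X_{\Gamma_{\OO}^{1}(\mathfrak{P}_{2}\mathfrak{P}_{3})}$ is a fake quadric.

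For the index and the torsion I would localize at each ramified prime and invoke Riehm's theorem exactly as in Sections \ref{ord10} and the order-$8$ example. At $\pP_{2}$ one gets $\OO_{\pP_{2}}^{1}/\OO_{\pP_{2}}^{1}(\widehat{\mathfrak{P}}_{2})\cong\ker(\FF_{4}^{\ast}\to\FF_{2}^{\ast})\cong\ZZ/3\ZZ$, and at $\pP_{3}$ one gets $\OO_{\pP_{3}}^{1}/\OO_{\pP_{3}}^{1}(\widehat{\mathfrak{P}}_{3})\cong\ker(\FF_{9}^{\ast}\to\FF_{3}^{\ast})\cong\ZZ/4\ZZ$. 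By strong approximation together with the Chinese remainder theorem $\OO^{1}/\OO^{1}(\mathfrak{P}_{2}\mathfrak{P}_{3})\cong\ZZ/3\ZZ\times\ZZ/4\ZZ\cong\ZZ/12\ZZ$; since $-1\equiv1\bmod\mathfrak{P}_{2}$ but $-1\not\equiv1\bmod\mathfrak{P}_{3}$, the image of $-1$ is the unique element of order $2$, whence $\Gamma_{\OO}^{1}/\Gamma_{\OO}^{1}(\mathfrak{P}_{2}\mathfrak{P}_{3})\cong(\ZZ/12\ZZ)/\langle6\rangle\cong\ZZ/6\ZZ$, giving index $6$. Torsion-freeness follows because a $3$-torsion element would survive in the pro-$2$ group $\OO_{\pP_{2}}^{1}(\widehat{\mathfrak{P}}_{2})/\!\pm1$ and a $2$-power torsion in the pro-$3$ group $\OO_{\pP_{3}}^{1}(\widehat{\mathfrak{P}}_{3})/\!\pm1$, both impossible. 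Combined with $N\Gamma_{\OO}^{+}/\Gamma_{\OO}^{1}\cong(\ZZ/2\ZZ)^{2}$ and an exact sequence analogous to (\ref{exact}) (after checking, as in the earlier examples, that $N\Gamma_{\OO}^{+}$ is the full normaliser of $\Gamma_{\OO}^{1}(\mathfrak{P}_{2}\mathfrak{P}_{3})$, the two-sided prime ideals over $2$ and $3$ being unique), this yields $|\aut|=4\cdot6=24$.

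The feature distinguishing this case from the $\QQ(\sqrt{2})$ examples is that the fundamental unit $\epsilon=2+\sqrt{3}$ has norm $+1$ and is totally positive, hence represents a nontrivial class in $\oo_{k}^{\ast,+}/(\oo_{k}^{\ast})^{2}\cong\ZZ/2\ZZ$; this class supplies one of the two $\ZZ/2\ZZ$ factors (the other coming from $\mathfrak{P}_{2}\mathfrak{P}_{3}$, whose generator $3+\sqrt{3}$ is totally positive, while no generator of $\pP_{2}$ or of $\pP_{3}$ individually is). To build the cyclic subgroup of order $12$ I would take a primitive twelfth root of unity $\lambda=\xi_{12}\in\OO^{1}$, which exists because $k(\xi_{12})=k(\sqrt{-1})=k(\sqrt{-3})$ embeds in $B$ (one checks, as in the other cases, that neither $\pP_{2}$ nor $\pP_{3}$ splits in it). Then $\lambda$ satisfies $\lambda^{2}-\sqrt{3}\lambda+1=0$, and for $g=\lambda+1$ one computes $\Nrd(g)=\Nrd(\lambda)+\Trd(\lambda)+1=2+\sqrt{3}=\epsilon$, so $g\in N\Gamma_{\OO}^{+}$, while $g^{2}=(2+\sqrt{3})\lambda\equiv\lambda$ modulo $k^{\ast}$. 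As $\lambda$ has order $6$ in $\pgl$ (because $\lambda^{6}=-1\in k^{\ast}$), $g$ has order $12$ in $N\Gamma_{\OO}^{+}$; since $\Gamma_{\OO}^{1}(\mathfrak{P}_{2}\mathfrak{P}_{3})$ is torsion-free, a torsion element cannot collapse in the quotient, so $g$ retains order $12$ and generates the desired cyclic subgroup.

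I expect the main obstacle to be the bookkeeping that pins $N\Gamma_{\OO}^{+}/\Gamma_{\OO}^{1}$ down to order exactly $4$ rather than a larger $(\ZZ/2\ZZ)^{l}$: one must check, using Shavel's description (\ref{max2}) and $h(k)=1$, that the reduced norm identifies this quotient with the subgroup of $k^{\ast,+}/(k^{\ast})^{2}$ generated precisely by $[\epsilon]$ and $[3+\sqrt{3}]$, and that these two classes are independent (their norms to $\QQ$ being $1$ and $6$). The fake-quadric verification, the torsion-free argument, and the order-$12$ construction are then routine adaptations of the preceding sections.
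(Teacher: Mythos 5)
Your proposal is correct and follows essentially the same route as the paper: Shimizu's formula together with Riehm's theorem and the index $[\Gamma_{\OO}^{1}:\Gamma_{\OO}^{1}(\mathfrak{P}_{2}\mathfrak{P}_{3})]=6$ for the invariants, torsion-freeness via the local congruence subgroups, the element $g=1+\xi_{12}$ with $\Nrd(g)=2+\sqrt{3}$ and $g^{2}=(2+\sqrt{3})\xi_{12}$ for the cyclic subgroup of order $12$, and the Klein four group $N\Gamma_{\OO}^{+}/\Gamma_{\OO}^{1}$ (one factor from the totally positive fundamental unit, one from the totally positive generator of $\pP_{2}\pP_{3}$) for the order $24$. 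The only cosmetic difference is that the paper rules out torsion by noting $\Trd(\lambda)=\pm\sqrt{3}\not\equiv 2 \bmod \pP_{2}\pP_{3}$, whereas you use the pro-$p$ argument from the earlier examples; both work.
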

\begin{rem}
The full automorphism group in this case has order $24$. To our knowledge,
this is the largest known automorphism group of a fake quadric. The precise abstract group structure 
of $\aut(X_{\Gamma_{\OO}^{1}(\mathfrak{P}_{2}\mathfrak{P}_{3})})$ is not known to us,
since the local method, used in previous examples does not apply directly in this case. \end{rem}
\begin{proof}
That $X_{\Gamma_{\OO}^{1}(\mathfrak{P}_{2}\mathfrak{P}_{3})}$ has
the correct numerical invariants follows again from Riehm's Theorem, Shimizu's formula
and the observation that for the index we have 
$[\Gamma_{\OO}^1:\Gamma_{\OO}^1(\mathfrak{P}_2\mathfrak{P}_3)]=[\Gamma_{\OO}^1:\Gamma_{\OO}^1(\mathfrak{P}_2)][\Gamma_{\OO}^1:\Gamma_{\OO}^1(\mathfrak{P}_3)]$ 
. By Shavel's criterion, $B$ contains $k(\xi_{12})$
where $\xi_{12}$ is a primitive twelfth root of unity, hence there
is an element $\lambda\in\OO$ with $\lambda^{6}=-1$. So to show
that $\Gamma_{\OO}^{1}(\mathfrak{P}_{2}\mathfrak{P}_{3})$ is torsion
free we have to exclude the existence of 6-torsions in $\Gamma_{\OO}^{1}(\mathfrak{P}_{2}\mathfrak{P}_{3})$.
But since the reduced trace of $\lambda$ is $\pm\sqrt{3}$ which
is not congruent $2$ modulo $\pP_{2}\pP_{3}$, $\lambda$ is not
contained in $\Gamma_{\OO}^{1}(\mathfrak{P}_{2}\mathfrak{P}_{3})$.
The element $g=\lambda+1$ has norm $\Nrd(g)=2+\sqrt{3}$ which is
a totally positive unit of $\oo_{k}$ unit, hence $g$ lies in $\Gamma_{\OO}^{+}=\OO^{+}/\oo_{k}^{\ast}$,
where $\OO^{+}$ denotes the group of all units whose reduced norm
is totally positive. The group $\Gamma_{\OO}^{+}$ which is an index-2-extension
of $\Gamma_{\OO}^{1}$ since the fundamental unit $2+\sqrt{3}$ is
totally positive. Also $g^{2}=(2+\sqrt{3})\lambda$ which shows that
$g$ has order $12$ in $\Gamma_{\OO}^{+}\triangleleft N\Gamma_{\OO}^{+}$.
The image of $g$ in $N\Gamma_{\OO}^{+}/\Gamma_{\OO}^{1}$ is not
trivial and the discussion in \cite{Shavel78}, p.~223-224 shows
that $N\Gamma_{\OO}^{+}/\Gamma_{\OO}^{\ast}$ is generated by the
class of an element $\Pi\in N\Gamma_{\OO}^{+}$ with $\Nrd(\Pi)=\pi_{2}\pi_{3}$
where $\pP_{2}=\langle\pi_{2}\rangle,\pP_{3}=\langle\pi_{3}\rangle$
(note that the generators $\pi_{2}$ and $\pi_{3}$ can not be chosen
totally positive). Therefore, $Aut(X_{\Gamma_{\OO}^1((\mathfrak{P}_{2}\mathfrak{P}_{3}))})$ is 
of order $24$ and is an extension of $\ZZ/6\ZZ$ by the Klein's four group. 
\end{proof}

\section{Computations of the quotient surfaces}

Let $S$ be a quaternionic fake quadric, $G$ a group of automorphisms
of $S$, $X=S/G$ the quotient surface and let $\pi:Z\to S/G$ be
the minimal desingularisation map. 

Let us first study the case where $G$ is generated by an involution
$\sigma$. 
\begin{prop}
\label{pro:The-involution-} An involution $\sigma$ has $4$ fixed
points. The invariants of $Z$ are :\[
K_{Z}^{2}=4,\, c_{2}=8,\, q=p_{g}=0,\, h^{1,1}=5.\]
 The surface $Z$ is minimal of general type.\end{prop}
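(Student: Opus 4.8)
The plan is to establish the four claims — the number of fixed points, the individual invariants, and minimality — in that logical order, exploiting the general machinery recalled in the previous section. For a quaternionic fake quadric $S$ we have $c_2(S)=4$, $K_S^2 = 8$, and $q=p_g=0$. Since $\sigma$ is an involution, Theorem \ref{pro:Finite number fix pt} tells us that its fixed point set is finite (the one-dimensional case being excluded for quaternionic fake quadrics). To count the fixed points, I would apply the topological Lefschetz formula via Corollary \ref{cor:lefschetz top}: it gives $e(S^\sigma) \in \{2,4\}$. To pin down the exact value I would invoke the holomorphic Lefschetz fixed point formula (Proposition \ref{pro:Lefschetz formula}). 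By Lemma \ref{lemme restriction sur les automor} the action on each tangent space is faithful with no reflection, so each fixed point of the involution is of type $A_{1}$ (the eigenvalues are $(-1,-1)$), contributing $\frac{1}{\det(1-d\sigma)} = \frac{1}{(1-(-1))^2} = \frac14$ to the holomorphic Lefschetz sum. Setting the sum equal to $1$ forces exactly $4$ fixed points, which simultaneously rules out $e(S^\sigma)=2$ and confirms the four $A_1$ singularities on $X = S/\sigma$ recorded in the table of Theorem B.

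Next I would compute the invariants of $Z$. The quotient $X = S/\sigma$ has four $A_1$ (nodal) singularities, each resolved by a single $(-2)$-curve. For $c_2(Z)$ I would use Lemma \ref{lem:The-Euler-number}: since only the four isolated fixed points have nontrivial stabiliser, $e(X) = \frac{1}{2}(e(S) + e(S^\sigma)) = \frac{1}{2}(4+4) = 4$, and then $e(Z) = e(X) + 4 = 8$ because each of the four $A_1$ singularities is resolved by one irreducible exceptional curve. Thus $c_2(Z)=8$. For $K_Z^2$ I would use the relation $K_X^2 = K_S^2/|G| = 8/2 = 4$; since an $A_1$ (Du Val) singularity is canonical, the minimal resolution of a node contributes nothing to $K^2$, i.e. $K_Z = \pi^* K_X$ (the discrepancies $a_i$ vanish for $A_1$), so $K_Z^2 = K_X^2 = 4$. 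Finally $q=p_g=0$ for $Z$ follows directly from Lemma \ref{lemme q=00003D00003D00003Dpg=00003D00003D00003D0}, and then Noether's formula (or equivalently $h^{1,1} = c_2 - 2 + 2q + \text{correction}$) gives $h^{1,1}=5$: from $\chi(\OO_Z)=1$ and $b_2 = c_2 - 2 + 4q = 8-2 = 6$ with $b_2 = 2p_g + h^{1,1} = h^{1,1}$, we get $h^{1,1}=6$ — so here I must double-check the bookkeeping, since the stated value is $5$; the resolution is that $h^{1,1}$ should be read together with the Hodge decomposition $b_2 = 2p_g + h^{1,1}$ giving $h^{1,1}=b_2=6$, and I would reconcile this carefully, most likely the intended count excludes one class or uses a different normalisation that I would verify against the $K^2=4$, $c_2=8$ data.

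Finally, for minimality and the general type assertion, since $K_Z^2 = 4 > 0$, Lemma \ref{lem: when has general type} immediately yields that $Z$ has Kodaira dimension $\kappa = 2$, i.e.\ $Z$ is of general type. Minimality is the most delicate point and the step I expect to be the main obstacle. To prove $Z$ is minimal I would argue that $Z$ contains no $(-1)$-curves. The key is that $\pi^* K_X$ is nef (as recalled in the previous section, $K_X$ is ample so $\pi^* K_X$ is nef) and $K_Z = \pi^* K_X$ since the singularities are canonical. If $E$ were a $(-1)$-curve on $Z$, then $K_Z \cdot E = -1 < 0$, contradicting $K_Z = \pi^* K_X$ being nef. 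Hence no $(-1)$-curve exists and $Z$ is minimal. The care needed here is to confirm that $E$ is not contracted by $\pi$ (the exceptional curves are the four $(-2)$-curves, not $(-1)$-curves) and that the numerical equivalence $K_Z = \pi^*K_X$ genuinely holds with zero discrepancy for $A_1$ singularities; this is where I would spend the most attention, but it follows cleanly from the canonical (Du Val) nature of nodes.
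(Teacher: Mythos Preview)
Your approach is essentially identical to the paper's: the holomorphic Lefschetz formula gives $1=\sum_{s}\tfrac14$, hence four $A_1$ points; the invariants of $Z$ follow from the standard formulas, and minimality is deduced from $K_Z=\pi^*K_X$ being nef. The paper's proof is just a terse version of what you wrote.

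Your hesitation about $h^{1,1}$ is well-founded and is not a gap in your argument. With $q=p_g=0$ and $c_2(Z)=8$ one has $b_2=c_2-2=6$ and $h^{1,1}=b_2-2p_g=6$, exactly as you computed; the value $h^{1,1}=5$ in the statement appears to be a typo in the paper. There is no alternative normalisation that would yield $5$ here, so you should trust your computation.
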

\begin{proof}
By Lefschetz formula (Proposition \ref{pro:Lefschetz formula}), $1=\sum_{s=\sigma(s)}\frac{1}{4}$,
therefore $\sigma$ has $4$ fixed points. Their images in $S/\sigma$
are $4$ $A_{1}$ singularities, resolved by $4$ $(-2)$-curves on
$Z$. The invariants of $Z$ are easy to compute.\\
The surface $Z$ is of general type and is minimal because $K_{Z}$
is the pullback of the nef divisor $K_{X}$.\end{proof}
\begin{prop}
\label{pro:Quotient by 3}Let be $G=\mathbb{Z}/3\mathbb{Z}$. The
singularities of quotient surface $X$ are $2A_{3,1}+2A_{3,2}$. The
surface $Z$ has general type and: \[
K_{Z}^{2}=2,\,\text{\ensuremath{c_{2}}=10},\, q=p_{g}=0.\]
\end{prop}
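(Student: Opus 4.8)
The plan is to pin down the fixed locus and the local singularity types of $X = S/G$ using the two Lefschetz-type formulas, and then to read off the invariants of $Z$ from the resolution data. First I would fix a generator $\sigma$ of $G = \mathbb{Z}/3\mathbb{Z}$. Since $3$ is prime to $2$ we may write $\sigma = (\sigma^2)^2$, so Corollary \ref{cor:lefschetz top} gives $e(S^\sigma) = 4$; by Theorem \ref{pro:Finite number fix pt} the fixed points are isolated, hence $\sigma$ (and $\sigma^{-1} = \sigma^2$, which fixes the same points) has exactly $4$ fixed points. As $G$ has prime order, every nontrivial stabilizer equals $G$, so $S_3 = S^\sigma$; and by Lemma \ref{lemme restriction sur les automor} each fixed point maps to a cyclic quotient singularity $A_{3,q}$ with $q$ coprime to $3$, that is to an $A_{3,1}$ or an $A_{3,2} = A_2$. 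Writing $r_1, r_2$ for the respective counts, the fixed-point count reads
\[
r_1 + r_2 = 4 .
\]

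Next I would apply Zhang's formula (Proposition \ref{prop Zhang formula}) with $p = 3$. Using $a_1(3) = \frac{5-3}{12} = \frac{1}{6}$ and $a_2(3) = \frac{11-3}{24} = \frac{1}{3}$, it becomes $\frac{r_1}{6} + \frac{r_2}{3} = 1$, i.e.
\[
r_1 + 2 r_2 = 6 .
\]
Subtracting the two relations yields $r_2 = 2$ and $r_1 = 2$, so the singularities of $X$ are exactly $2A_{3,1} + 2A_{3,2}$, as asserted. For the Euler number, Lemma \ref{lem:The-Euler-number} gives
\[
e(X) = \tfrac{1}{3}\bigl(e(S) + 2\,e(S_3)\bigr) = \tfrac{1}{3}(4 + 2\cdot 4) = 4 ,
\]
since $e(S) = c_2(S) = 4$ and $S_3$ is the set of $4$ fixed points. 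Each $A_{3,1}$ resolves to a single $(-3)$-curve and each $A_{3,2} = A_2$ to a chain of two $(-2)$-curves, giving $2\cdot 1 + 2\cdot 2 = 6$ exceptional components; the second part of Lemma \ref{lem:The-Euler-number} then yields $c_2(Z) = 4 + 6 = 10$.

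The step I expect to require the most care is $K_Z^2$. Starting from $K_X^2 = K_S^2/|G| = 8/3$, I would observe that the $A_2$ points are rational double points, hence crepant and contributing nothing, whereas each $(-3)$-curve $E_j$ over an $A_{3,1}$ point carries a nonzero discrepancy. Adjunction gives $K_Z \cdot E_j = -2 - E_j^2 = 1$, and together with $\pi^* K_X \cdot E_j = 0$ and the symmetry of the two $A_{3,1}$ resolutions this forces $K_Z = \pi^* K_X - \frac{1}{3}(E_1 + E_2)$. Since $E_1 E_2 = 0$ and $E_j^2 = -3$, expanding the square gives
\[
K_Z^2 = (\pi^* K_X)^2 + \tfrac{1}{9}(E_1 + E_2)^2 = \tfrac{8}{3} + \tfrac{1}{9}(-6) = 2 .
\]

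Finally, $q = p_g = 0$ for $Z$ follows from the lemma stating that the minimal resolution of a quotient of a surface with $q = p_g = 0$ again has $q = p_g = 0$, applied to the fake quadric $S$. Since $K_Z^2 = 2 > 0$, Lemma \ref{lem: when has general type} shows that $Z$ is of general type. The delicate points to verify carefully are the value and sign of the discrepancy $-\frac{1}{3}$ for the $\frac{1}{3}(1,1)$ singularity and the identification $A_{3,2} = A_2$ as a du Val point (hence crepant); the clean value $K_Z^2 = 2$ rests entirely on these.
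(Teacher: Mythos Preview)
Your argument is correct and follows essentially the same line as the paper: count the fixed points, split them into the two singularity types, and read off the invariants of $Z$. There is one small tactical difference worth noting. To determine $(r_1,r_2)$ the paper writes $K_Z^2=\tfrac{8}{3}-\tfrac{r_1}{3}$ and uses integrality of $K_Z^2$ (with $0\le r_1\le 4$) to force $r_1=2$, whereas you obtain a second linear relation $r_1+2r_2=6$ directly from Zhang's formula and solve the resulting $2\times 2$ system. Both routes are valid; yours makes the use of Proposition~\ref{prop Zhang formula} explicit, while the paper's is a shortcut relying on smoothness of $Z$. Similarly, the paper deduces $c_2(Z)=10$ from Noether's formula once $K_Z^2=2$ and $\chi=1$ are known, while you compute it via $e(S/G)=4$ plus the six exceptional curves; these are of course equivalent.
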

\begin{proof}
We use the notations of Zhang's formula (Proposition \ref{prop Zhang formula}).
In this case this formula gives $r_{1}+r_{2}=4$. A $A_{3,1}$ singularity
is resolved by a $(-3)$-curve and we have \[
K_{Z}^{2}=\frac{8}{3}-\frac{r_{1}}{3}.\]
 Therefore $r_{1}=2$ and $r_{2}=2$. The singularities of $X=S/\sigma$
are $2A_{3,1}+2A_{3,2}$. Moreover, as $q=p_{g}=0$, we have $c_{2}=10$.
$Z$ is of general type by Lemma \ref{lem: when has general type}.\end{proof}
\begin{prop}
\label{pro:Quotient by 3 times 3}There is no quaternionic fake quadric
with $G=(\mathbb{Z}/3\mathbb{Z})^{2}\subset\aut X$.\end{prop}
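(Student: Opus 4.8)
The plan is to derive a contradiction from assuming that $G=(\ZZ/3\ZZ)^2$ acts on a quaternionic fake quadric $S$. The strategy rests on counting fixed points using the holomorphic Lefschetz and Zhang formulas, and then checking that the resulting numerics are impossible. First I would recall from Theorem \ref{pro:Finite number fix pt} that every non-trivial automorphism of $S$ has only finitely many (hence isolated) fixed points, so all the singularity-counting machinery applies. The group $(\ZZ/3\ZZ)^2$ has exactly four subgroups of order $3$, generating four order-$3$ cyclic subgroups $\langle\sigma_1\rangle,\dots,\langle\sigma_4\rangle$; each non-identity element has order $3$, so for each such element Corollary \ref{cor:lefschetz top} forces $e(S^{\sigma})=4$ (since an order-$3$ element is prime to $2$, it is a square, giving $e(S^{\sigma})=4$). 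Thus every one of the eight non-trivial elements has exactly four fixed points, and by Zhang's formula (Proposition \ref{prop Zhang formula}) with $p=3$, writing $a_1(3)=a_2(3)=\tfrac{1}{12}\cdot 8=\tfrac{2}{3}$... in fact $a_1(3)=\frac{5-3}{12}=\frac16$ and $a_2(3)=\frac{11-3}{24}=\frac13$, which already pins the distribution of singularity types on $S/\langle\sigma_i\rangle$ as in Proposition \ref{pro:Quotient by 3}.

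Next I would exploit the combinatorics of how the fixed loci of the four subgroups interact. The key point is that a point $s\in S$ fixed by one element of order $3$ in $G$ and fixed by another element lying in a \emph{different} order-$3$ subgroup would be fixed by all of $G$; but then the stabiliser $\Stab(G,s)=(\ZZ/3\ZZ)^2$ acts faithfully on the tangent plane $T_{S,s}$ by Lemma \ref{lemme restriction sur les automor}, which is impossible since $(\ZZ/3\ZZ)^2$ cannot embed in $\Gl_2(\CC)$ as a fixed-point-free (reflection-free) linear group of rank $2$ — its image would have to be diagonalisable simultaneously and the condition of having no reflections forces it to be cyclic. Hence the fixed-point sets of the four distinct subgroups are pairwise disjoint. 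Counting: each of the four subgroups $\langle\sigma_i\rangle$ contributes $4$ fixed points (the common fixed points of its two non-trivial generators, which coincide since $\sigma_i$ and $\sigma_i^2$ fix the same points), giving $4\times 4=16$ points in $S$ where the full stabiliser is cyclic of order $3$.

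The obstacle, and the heart of the argument, is then a global count via the Euler-number formula of Lemma \ref{lem:The-Euler-number}. With $e(S)=c_2(S)=4$, $|G|=9$, and the stratum $S_3$ consisting of the $16$ points just found (so $S_n=\emptyset$ for $n\neq 1,3$), the formula reads
\[
e(S/G)=\frac{1}{9}\bigl(4+(3-1)\cdot 16\bigr)=\frac{4+32}{9}=4.
\]
Each of the $16$ points maps to one of $16/... $ — more precisely the $16$ points fall into $G$-orbits of size $|G|/3=3$, giving $16/3$ singular points on $S/G$, which is already non-integral. This is the contradiction: the number of singular points on the quotient must be an integer, yet the orbit-counting forces $16/3\notin\ZZ$. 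I expect the delicate step to be justifying rigorously that the four fixed-loci are disjoint and that the $16$ fixed points group into free $G$-orbits away from their stabilisers; one must verify that no point is fixed by the whole group, which is exactly where Lemma \ref{lemme restriction sur les automor} (faithfulness and absence of reflections on $T_{S,s}$) rules out a rank-$2$ abelian stabiliser. Once disjointness is in hand, the divisibility obstruction closes the argument.
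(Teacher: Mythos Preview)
Your argument is correct and rests on the same idea as the paper, though the paper's route is more direct. The paper simply observes that $\sigma_2$ permutes the $4$ fixed points of $\sigma_1$ in orbits of size $1$ or $3$; since $4=1+3$, at least one point is fixed by both generators, hence by all of $G$, and then the tangent-space argument (your Lemma~\ref{lemme restriction sur les automor} step) yields the contradiction immediately. Your version runs the contrapositive: ruling out a common fixed point forces all such orbits to have size $3$, but then $4$ (or, summing over the four subgroups, $16$) is not divisible by $3$. The detours through Zhang's formula and the Euler-number computation $e(S/G)=4$ are unnecessary --- the entire argument reduces to the single observation that $3\nmid 4$ together with the reflection-free faithfulness of the action on $T_{S,p}$.
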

\begin{proof}
Let $\sigma_{1},\sigma_{2}$ be the two generating elements of $G$.
Let $p$ be a fixed point of $\sigma_{1}$. Then as $\sigma_{1}$
and $\sigma_{2}$ commute, $\sigma_{2}p,\sigma_{2}^{2}p$ are also
fixed points of $\sigma_{1}$ and there are one or four points fixed
by the whole group. Let $p$ be such a fixed point. Then there must
be a faithful action of $G$ on $T{}_{S,p}$. But such a faithful
action has elements which have eigenvalue $1$. This is impossible
as an automorphism has only isolated fixed points.\end{proof}
\begin{prop}
\label{prop order 4}Let be $G=\mathbb{Z}/4\mathbb{Z}$. The singularities
of the quotient $X$ are $2A_{4,1}+2A_{4,3}$ or $A_{1}+2A_{4,3}$.
The invariants of the resolution $Z$ are \[
K_{Z}^{2}=0,\, c_{2}=12,\, q=p_{g}=0\]
 in the first case, and in the second case $Z$ is minimal and satisfies
\[
K_{Z}^{2}=2,c_{2}=10,q=p_{g}=0.\]

\end{prop}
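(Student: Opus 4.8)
The plan is to fix a generator $\sigma$ of $G=\ZZ/4\ZZ$ and to analyse the geometry through the involution $\sigma^{2}$, since $G$ is not of prime order and Zhang's formula (Proposition~\ref{prop Zhang formula}) does not apply directly. By Proposition~\ref{pro:The-involution-}, $\sigma^{2}$ has exactly $4$ fixed points, each with local action $(-1,-1)$. Every fixed point of $\sigma$ is fixed by $\sigma^{2}$, so there are at most four of them. At such a point $s$, Lemma~\ref{lemme restriction sur les automor} guarantees that $G$ acts faithfully on $T_{S,s}$ and that no element of $G$ acts as a reflection; hence $d\sigma$ has eigenvalues $(i^{a},i^{b})$ generating a faithful $\ZZ/4$-action, and the absence of reflections applied to $\sigma^{2}$, whose differential is $((-1)^{a},(-1)^{b})$, forces $a,b\in\{1,3\}$. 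Therefore $s$ is either of type $A_{4,1}$ (eigenvalues $(i,i)$ or $(-i,-i)$) or of type $A_{4,3}$ (eigenvalues $(i,-i)$).

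The decisive step is to apply the holomorphic Lefschetz formula (Proposition~\ref{pro:Lefschetz formula}) to $\sigma$, which is legitimate because all powers of $\sigma$ have isolated fixed points (Theorem~\ref{pro:Finite number fix pt}). Computing $\det(1-d\sigma|T_{S,s})$ gives the local contribution $\tfrac12$ at each $A_{4,3}$-point and $\pm\tfrac{i}{2}$ at an $A_{4,1}$-point, the sign depending on whether the eigenvalues are $(i,i)$ or $(-i,-i)$. Writing $n_{3}$ for the number of $A_{4,3}$-points and $n_{1}^{\pm}$ for the two kinds of $A_{4,1}$-points, the formula becomes $1=\tfrac{n_{3}}{2}+\tfrac{i}{2}(n_{1}^{+}-n_{1}^{-})$. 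Comparing real and imaginary parts gives $n_{3}=2$ and $n_{1}^{+}=n_{1}^{-}$, so that the total number $f$ of fixed points of $\sigma$ equals $2+n_{1}$ with $n_{1}$ even; since $f\le 4$ this leaves $n_{1}\in\{0,2\}$. The $4-f$ remaining fixed points of $\sigma^{2}$ are not fixed by $\sigma$ and are paired by $\sigma$ into orbits of length $2$ with stabiliser $\langle\sigma^{2}\rangle$ acting as $(-1,-1)$, each such orbit giving a single $A_{1}$ on $X$. This yields exactly the two cases of the statement: $f=4$ gives $2A_{4,1}+2A_{4,3}$, and $f=2$ gives $A_{1}+2A_{4,3}$.

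For the numerical invariants I would start from $K_{X}^{2}=K_{S}^{2}/|G|=8/4=2$. The singularities $A_{4,3}=A_{3}$ and $A_{1}$ are rational double points, hence crepant and contribute nothing, whereas $A_{4,1}$ is the cyclic quotient $\tfrac14(1,1)$ resolved by a single $(-4)$-curve $E$; the relations $\pi^{*}K_{X}\cdot E=0$ and $K_{Z}\cdot E=-2-E^{2}$ give discrepancy $a=\tfrac12$, so each $A_{4,1}$ contributes $a^{2}E^{2}=-1$ to $K_{Z}^{2}$. Hence $K_{Z}^{2}=K_{X}^{2}+2(-1)=0$ in the first case and $K_{Z}^{2}=K_{X}^{2}=2$ in the second. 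For the Euler numbers, Lemma~\ref{lem:The-Euler-number} gives $e(S/G)=\tfrac14\bigl(e(S)+3f+(4-f)\bigr)=2+\tfrac{f}{2}$; adding the number of exceptional components ($1$ for each $A_{4,1}$ and each $A_{1}$, and $3$ for each $A_{3}$) yields $c_{2}(Z)=4+8=12$ in the first case and $c_{2}(Z)=3+7=10$ in the second. The vanishing $q=p_{g}=0$ is Lemma~\ref{lemme q=00003D00003D00003Dpg=00003D00003D00003D0}.

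In the second case all singularities are du Val, so $K_{Z}=\pi^{*}K_{X}$ is nef and $K_{Z}^{2}=2>0$; Lemma~\ref{lem: when has general type} then shows $Z$ is of general type, while nefness of $K_{Z}$ makes it minimal. I expect the main obstacle to be the bookkeeping in the holomorphic Lefschetz step: one must correctly identify the $A_{4,1}$-contributions as the purely imaginary values $\pm\tfrac{i}{2}$, use the reality of the left-hand side to read off both $n_{3}=2$ and the parity of $n_{1}$, and then keep careful track of the $A_{1}$ produced by the length-two orbits of $\sigma^{2}$-fixed points that escape the fixed locus of $\sigma$.
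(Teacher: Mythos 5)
Your proposal is correct and follows essentially the same route as the paper: both arguments classify the possible eigenvalues $(i,\pm i)$, $(-i,-i)$ at fixed points of $\sigma$ using the no-reflection constraint on $\sigma^2$, apply the holomorphic Lefschetz formula to pin down the two configurations, account for the leftover $\sigma^2$-fixed points as an $A_1$, and compute $K_Z^2$ and $c_2(Z)$ from the discrepancies and exceptional components. Your treatment is in fact slightly more detailed than the paper's (which leaves the second case's invariants and minimality to the reader), but the method is identical.
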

\begin{rem}
Proposition \ref{pro:Let-be-Dihedral D4} gives an example of the
first case. 
\end{rem}
\begin{proof}
Let $s$ be a fixed point of an order $4$ automorphism $\sigma$
acting on $S$. As the involution $\sigma^{2}$ has only isolated
fixed points, the eigenvalues of $\sigma$ acting on $T_{S,s}$ cannot
be $(i,-1)$ or $(-i,-1)$. Let $a,b,c$ be the number of fixed points
such that the eigenvalues of $\sigma$ are $(i,i),\,(-i,-i)$ and
$(i,-i)$ respectively. The Lefschetz holomorphic fixed point formula
implies\[
-\frac{a}{2i}+\frac{b}{2i}+\frac{c}{2}=1\ \text{and}\ a+b+c=4\ \text{or}\ 2,\]
 thus there are two cases : \\
 1) $a=b=1$ and $c=2$. The singularities of $S/G$ are $2A_{4,1}+2A_{4,3}$.
\\
 2) $a=b=0$ and $c=2$. In this case, the singularities of $S/G$
are $A_{1}+2A_{4,3}$ because $\sigma^{2}$ has $4$ fixed points.

A $A_{4,1}$ singularity is resolved by a $(-4)$-curve $C_{k}$.
A $A_{4,3}$ singularity is resolved by a chain of three $(-2)$-curve
and we have\[
K_{Z}=\pi^{*}K_{S/\sigma}-\sum_{k=1}^{k=2}\frac{1}{2}C_{k},\]
 thus $K_{Z}^{2}=\frac{8}{4}-2=0$ in the first case. Additionally,\[
e(S/\sigma)=\frac{1}{4}(4+(4-1)4)=4,\]
 thus $c_{2}(Z)=4+8=12$. The invariants in the second case are computed
in a similar way. \end{proof}
\begin{prop}
\label{pro: Quo by 5}Let be $G=\mathbb{Z}/5\mathbb{Z}$. The singularities
of $S/G$ are $4A_{5,2}$, or $A_{5,1}+2A_{5,2}+A_{5,4}$ or $2A_{5,1}+2A_{5,4}$.
The invariants of the surface $Z$ are respectively: \[
\begin{array}{c}
K_{Z}^{2}=0,\, c_{2}=12,\\
K_{Z}^{2}=-1,\, c_{2}=13,\\
K_{Z}^{2}=-2,\, c_{2}=14,\end{array}\]
 and in any case $q=p_{g}=0$. \end{prop}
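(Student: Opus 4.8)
The plan is to combine the fixed-point count coming from the topological Lefschetz formula with Zhang's formula to determine the singularity types, and then to read off the invariants of $Z$ from the Euler number formula and the discrepancy relations recalled in Section 3.

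First I would observe that since $5$ is odd, an order-$5$ automorphism $\sigma$ satisfies $\sigma=(\sigma^{3})^{2}$, so Corollary \ref{cor:lefschetz top} gives $e(S^{\sigma})=4$; as the fixed points are isolated by Theorem \ref{pro:Finite number fix pt}, $\sigma$ has exactly $4$ fixed points. By Lemma \ref{lemme restriction sur les automor} each of them maps to a cyclic quotient singularity $A_{5,q}$ with $q$ prime to $5$, and up to the swap $q\sim q^{-1}\bmod 5$ the possible types are $A_{5,1}$, $A_{5,2}=A_{5,3}$ and $A_{5,4}=A_{4}$. Writing $n_{1},n_{2},n_{4}$ for the numbers of singularities of these three types, I would apply Zhang's formula (Proposition \ref{prop Zhang formula}) with $p=5$: using the given values $a_{1}(5)=0$, $a_{2}(5)=a_{3}(5)=\frac{1}{4}$ and $a_{4}(5)=\frac{1}{2}$, and taking the pairing convention into account so that the $A_{5,2}=A_{5,3}$ contribution is $(r_{2}+r_{3})a_{2}=\frac{1}{4}n_{2}$, the formula reads $\frac{1}{4}n_{2}+\frac{1}{2}n_{4}=1$, i.e. $n_{2}+2n_{4}=4$. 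Combined with the total count $n_{1}+n_{2}+n_{4}=4$ this forces $n_{1}=n_{4}$, and running through $n_{4}\in\{0,1,2\}$ yields exactly the three configurations $4A_{5,2}$, $A_{5,1}+2A_{5,2}+A_{5,4}$ and $2A_{5,1}+2A_{5,4}$.

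To obtain the invariants I would first compute $e(S/G)$ from Lemma \ref{lem:The-Euler-number}: the only non-trivial stabilisers occur at the $4$ fixed points of $\sigma$, so $e(S/G)=\frac{1}{5}(4+4\cdot 4)=4$. The continued-fraction expansions $\frac{5}{1}=[5]$, $\frac{5}{2}=[3,2]$ and $\frac{5}{4}=[2,2,2,2]$ show that $A_{5,1}$, $A_{5,2}$ and $A_{5,4}$ are resolved by $1$, $2$ and $4$ rational curves respectively, so by the last assertion of Lemma \ref{lem:The-Euler-number} the value $c_{2}(Z)=e(S/G)+(\text{number of exceptional components})$ equals $12$, $13$ and $14$ in the three cases. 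For $K_{Z}^{2}$ I would use $K_{X}^{2}=K_{S}^{2}/5=\frac{8}{5}$ together with $K_{Z}=\pi^{*}K_{X}-\sum a_{i}C_{i}$ and the relations $K_{Z}C_{k}=-2-C_{k}^{2}$, $\pi^{*}K_{X}\cdot C_{k}=0$ from Section 3, which give $K_{Z}^{2}=K_{X}^{2}+(\sum a_{i}C_{i})^{2}$. Solving the resulting small linear systems singularity by singularity, the canonical $A_{4}$ contributes $0$, each $A_{5,1}$ (a single $(-5)$-curve with $a=\frac{3}{5}$) contributes $-\frac{9}{5}$, and each $A_{5,2}$ (coefficients $\frac{2}{5},\frac{1}{5}$) contributes $-\frac{2}{5}$; summing gives $K_{Z}^{2}=0,-1,-2$ respectively. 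Finally $q=p_{g}=0$ is automatic by Lemma \ref{lemme q=00003D00003D00003Dpg=00003D00003D00003D0}.

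The main obstacle is the careful bookkeeping in the two arithmetic steps: handling Zhang's pairing convention $r_{i}+r_{k}$ correctly so that no configuration is double-counted or omitted, and solving the discrepancy systems to extract the exact fractional contributions $(\sum a_{i}C_{i})^{2}$ of the non-canonical singularities $A_{5,1}$ and $A_{5,2}$. Once these two computations are pinned down, the remaining verifications of $c_{2}$ and $K_{Z}^{2}$ in each of the three cases are routine.
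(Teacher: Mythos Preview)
Your proposal is correct and follows essentially the same approach as the paper: both use the topological Lefschetz count of $4$ fixed points, Zhang's formula with $a_{1}(5)=0$, $a_{2}(5)=a_{3}(5)=\tfrac14$, $a_{4}(5)=\tfrac12$ to force $n_{1}=n_{4}$, and then compute the discrepancy contributions $-\tfrac{9}{5}$, $-\tfrac{2}{5}$, $0$ and the exceptional-component counts to obtain $K_{Z}^{2}$ and $c_{2}$. Your write-up is in fact slightly more explicit than the paper's in justifying $e(S^{\sigma})=4$ via $\sigma=(\sigma^{3})^{2}$ and in spelling out the continued-fraction expansions.
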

\begin{rem}
1) In Proposition \ref{pro:cyclique d'ordre 10} below, we give an
example of a surface such that the quotient by an order $5$ automorphism
has $2A_{5,1}+2A_{5,4}$ singularities.\\
 2) For the same reasons as for $(\mathbb{Z}/3\mathbb{Z})^{2}$
(see Proposition \ref{pro:Quotient by 3 times 3}), there is no fake
quadric $X$ with $(\mathbb{Z}/5\mathbb{Z})^{2}\subset\aut X$.\end{rem}
\begin{proof}
Using the notations of Proposition \ref{prop Zhang formula}, the
number of fixed points $r_{1}+r_{2}+r_{3}+r_{4}$ equals $4$. As
$e(S/\sigma)=\frac{1}{5}(4+(5-1).4)=4$ Zhang's formula yields\[
(a_{1},...,a_{4})=(0,\frac{1}{4},\frac{1}{4},\frac{1}{2}),\]
 with\[
\sum4a_{i}r_{i}=r_{2}+r_{3}+2r_{4}=4.\]
 Thus $r_{1}=r_{4}$. Therefore the possibilities for $(r_{1},r_{2},r_{3},r_{4})$
are $(0,i,j,0)$ with $i+j=4$, or $(1,i,j,1)$ with $i+j=2$ or $(2,0,0,2)$.
The singularities on the quotient are respectively:\[
\begin{array}{c}
4A_{5,2},\\
A_{5,1}+2A_{5,2}+A_{5,4},\\
2A_{5,1}+2A_{4}.\end{array}\]
 A singularity $A_{5,i}$ ($i=1,..4$) contributes (respectively)
\[
-\frac{9}{5},-\frac{2}{5},-\frac{2}{5},0\]
 to $K_{Z}^{2}$. Thus the self-intersection number is\[
K_{Z}^{2}=\frac{1}{5}(8-9r_{1}-2(r_{2}+r_{3})),\]
 and according to the possible tuples $(r_{1},\ldots,r_{4})$ as above:
$K_{Z}^{2}=0$, $K_{Z}^{2}=-1$ and $K_{Z}^{2}=-2$. As $e(S/G)=4$,
we get $c_{2}=12,13$ or $14$ according to the three possible singular
loci.

Let us justify our computation of $K_{Z}^{2}$. A $A_{5,1}$-singularity
is resolved by a $(-5)$-curve $C_{5}$, thus we have to add $-\frac{3}{5}C_{5}$
to the canonical divisor. This contributes $(-\frac{3}{5}C_{5})^{2}=-\frac{9}{5}$
to $K_{Z}^{2}$. On the other hand, a $A_{5,2}$-singularity is resolved
by a chain of two curves $C_{2},C_{3}$ with $C_{k}^{2}=-k$. We have
to add $-\frac{2}{5}C_{3}-\frac{1}{5}C_{2}$ to $\pi^{*}K_{X}$, and
the contribution to $K_{Z}^{2}$ is \[
(\frac{2}{5}C_{3}+\frac{1}{5}C_{2})^{2}=-\frac{2}{5}.\]
 Finally, note that $A_{5,3}=A_{5,2}$ and that the $A_{5,4}$-singularity
doesn't contribute to $K_{Z}^{2}$. \end{proof}
\begin{prop}
\label{Prop. Z/6Z} If $G=\mathbb{Z}/6\mathbb{Z}$, then $S/G$ has
singularities $2A_{6,1}+2A_{6,5}$. The minimal resolution $Z$ has
invariants: \[
K_{Z}^{2}=-4,\, c_{2}=16,\, q=p_{g}=0.\]
\end{prop}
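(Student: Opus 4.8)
The plan is to run the same fixed-point bookkeeping used for $\ZZ/4\ZZ$ in Proposition~\ref{prop order 4}, since Zhang's formula (Proposition~\ref{prop Zhang formula}) applies only to automorphisms of prime order. Write $G=\langle\sigma\rangle$ with $\sigma$ of order $6$ and let $\zeta=e^{\pi i/3}$ be a primitive sixth root of unity. By Theorem~\ref{pro:Finite number fix pt} every power of $\sigma$ has only isolated fixed points, so all the Lefschetz-type formulas apply. First I would pin down the possible tangent eigenvalues at a fixed point $p$ of $\sigma$: writing the action on $T_{S,p}$ as $(\zeta^a,\zeta^b)$, the requirement that the involution $\sigma^3$ and the order-$3$ map $\sigma^2$ both fix $p$ as an \emph{isolated} point, together with Lemma~\ref{lemme restriction sur les automor} (faithful, no reflection), forces $a,b$ to be odd and $\not\equiv 3 \bmod 6$, i.e.\ $a,b\in\{1,5\}$. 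Hence each fixed point of $\sigma$ maps to an $A_{6,1}$ singularity (eigenvalues $(\zeta,\zeta)$ or $(\zeta^5,\zeta^5)$) or an $A_{6,5}$ singularity (eigenvalues $(\zeta,\zeta^5)$).

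Next, let $a,b,c$ denote the number of fixed points of $\sigma$ with eigenvalues $(\zeta,\zeta)$, $(\zeta^5,\zeta^5)$ and $(\zeta,\zeta^5)$ respectively. The holomorphic Lefschetz formula (Proposition~\ref{pro:Lefschetz formula}) reads
\[
1=\frac{a}{(1-\zeta)^2}+\frac{b}{(1-\zeta^5)^2}+c,
\]
and since $\tfrac{1}{(1-\zeta)^2}=-\tfrac12+\tfrac{\sqrt3}{2}i$, $\tfrac{1}{(1-\zeta^5)^2}=-\tfrac12-\tfrac{\sqrt3}{2}i$ and $(1-\zeta)(1-\zeta^5)=1$, comparing imaginary and real parts gives $a=b$ and $c=a+1$. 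Thus $\sigma$ has $a+b+c=3a+1$ fixed points. Because $\sigma^3$ has exactly $4$ fixed points (Proposition~\ref{pro:The-involution-}) and these contain the fixed locus of $\sigma$, we get $3a+1\le 4$; and Corollary~\ref{cor:lefschetz top} excludes the value $3a+1=1$. Hence $a=b=1$, $c=2$, so $\sigma$ has $4$ fixed points producing $2A_{6,1}+2A_{6,5}$.

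Since these four points already account for all four fixed points of $\sigma^3$ and of $\sigma^2$ (each of which has exactly $4$, by Propositions~\ref{pro:The-involution-} and~\ref{pro:Quotient by 3}), there are no points with proper stabilizer $\ZZ/2\ZZ$ or $\ZZ/3\ZZ$, so $2A_{6,1}+2A_{6,5}$ is the full singular locus of $X=S/G$. For the invariants I would use Lemma~\ref{lem:The-Euler-number}: only the four points of $S_6$ contribute, so $e(S/G)=\tfrac16(4+5\cdot4)=4$; an $A_{6,1}$ is resolved by one $(-6)$-curve and $A_{6,5}=A_5$ by a chain of five $(-2)$-curves, whence $Z$ carries $2\cdot1+2\cdot5=12$ exceptional components and $c_2(Z)=4+12=16$. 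For $K_Z^2$ note $K_X^2=K_S^2/6=\tfrac{8}{6}$; each $A_{6,1}$ forces adding $-\tfrac23 C$ to $\pi^*K_X$ and contributes $(\tfrac23 C)^2=-\tfrac83$, while the Du Val points $A_{6,5}=A_5$ contribute $0$, giving $K_Z^2=\tfrac43-2\cdot\tfrac83=-4$. Finally $q=p_g=0$ by Lemma~\ref{lemme q=00003D00003D00003Dpg=00003D00003D00003D0}, and Noether's relation $K_Z^2+c_2=12$ serves as a consistency check.

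The routine parts are the explicit root-of-unity evaluations and the resolution data. The one genuinely delicate point will be justifying that the only singularities of $X$ are the $A_{6,q}$ coming from the full group: I must verify that every fixed point of the proper powers $\sigma^2,\sigma^3$ is in fact fixed by $\sigma$, which is exactly where the coincidence of the three fixed loci (all of cardinality $4$) is essential, and I must discard the spurious one-fixed-point solution via Corollary~\ref{cor:lefschetz top}. A useful sanity check throughout is that the eigenvalues induced on $T_{S,p}$ by $\sigma^3$ and $\sigma^2$ reproduce precisely the singularity types $4A_1$ and $2A_{3,1}+2A_{3,2}$ obtained in Propositions~\ref{pro:The-involution-} and~\ref{pro:Quotient by 3}.
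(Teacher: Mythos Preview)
Your argument is correct and follows essentially the same route as the paper: restrict the tangent eigenvalues at a fixed point of $\sigma$ via Lemma~\ref{lemme restriction sur les automor}, apply the holomorphic Lefschetz formula to obtain $a=b$ and $c=a+1$, and then use Corollary~\ref{cor:lefschetz top} to force $(a,b,c)=(1,1,2)$, after which the invariants of $Z$ follow by the standard resolution data. Your explicit verification that the fixed loci of $\sigma^{2}$ and $\sigma^{3}$ coincide with that of $\sigma$ (hence no extra $A_{1}$ or $A_{2}$/$A_{3,1}$ singularities on $S/G$) is a point the paper leaves implicit, so in this respect your write-up is slightly more complete.
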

\begin{proof}
Let $s$ be a fixed point of an order 6 automorphism $\sigma$. Let
$\alpha$ be a primitive third root of unity. By Lemma \ref{lemme restriction sur les automor},
the action of $\sigma$ on $T_{S,s}$ has eigenvalues $(-\alpha,(-\alpha)^{a})$
or $(-\alpha^{2},(-\alpha^{2})^{a})$ with $a=1\,\mbox{or}\,5$. Let
$r_{1},\, r_{2}$ and $r_{3}$ be respectively the number of fixed
points of $\sigma$ with eigenvalues $(-\alpha,-\alpha)$, $(-\alpha^{2},-\alpha^{2})$
and $(-\alpha,-\alpha^{5})$. Lefschetz fixed point formula (Proposition
\ref{pro:Lefschetz formula}) implies the relation \[
\frac{r_{1}}{(1+\alpha)^{2}}+\frac{r_{2}}{(1+\alpha^{2})^{2}}+r_{3}=1,\]
 therefore $r_{1}=r_{2}$ and $-r_{1}+r_{3}=1$. By Corollary \ref{cor:lefschetz top},
$\sigma$ has $2$ or $4$ fixed points. The only possibility for
$(r_{1},r_{3})$ is therefore $(1,2)$. The singularities are $2A_{6,1}+2A_{6,5}$
and the minimal resolution $Z$ of $S/\sigma$ has $K_{Z}^{2}=\frac{8}{6}-2.\frac{8}{3}=-4$.
Moreover $e(Z)=\frac{1}{6}(4+5\cdot4)+2+2\cdot5=16$.
\end{proof}
Let us study the case $G=\mathbb{Z}/8\mathbb{Z}$.
\begin{prop}
Let $\sigma$ be an order $8$ element acting on $S$. The singularities
of $S/\sigma$ are $2A_{8,3}+2A_{8,5}$. The resolution $Z$ of the
quotient surface is a surface with \[
K_{Z}^{2}=-2,\, c_{2}(Z)=14,\, q=p_{g}=0.\]
\end{prop}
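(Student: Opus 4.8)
The plan is to adapt the method of Propositions~\ref{prop order 4} and~\ref{Prop. Z/6Z}: first locate the fixed points of $\sigma$ together with the eigenvalues of $d\sigma$ there, then read off the singularities of $S/\sigma$, and finally compute the invariants of the resolution $\pi\colon Z\to S/\sigma$.

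First I would analyse the local picture. Write $\zeta=e^{i\pi/4}$ and let $s$ be a fixed point of $\sigma$. By Lemma~\ref{lemme restriction sur les automor} the representation of $\langle\sigma\rangle$ on $T_{S,s}$ is faithful and reflection-free, so $d\sigma$ has eigenvalues $(\zeta^{a},\zeta^{b})$ with $a,b\not\equiv0\bmod8$; since the non-trivial involution $\sigma^{4}$ has isolated fixed points (Proposition~\ref{pro:The-involution-}), $((-1)^{a},(-1)^{b})$ must avoid the eigenvalue $1$, forcing $a,b$ odd. Thus the image of $s$ is of type $A_{8,q}$ with $q\equiv ab\bmod8\in\{1,3,5,7\}$. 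To count the fixed points I would use that $\sigma$ preserves the two factors of $\HH\times\HH$ and therefore acts trivially on $H^{2}(S,\ZZ)_{mt}$, which is spanned by the two factor classes; the topological Lefschetz formula (as in Corollary~\ref{cor:lefschetz top}) then yields $e(S^{\sigma})=4$. Since $\sigma^{2}$ is the square of $\sigma$ it also satisfies $e(S^{\sigma^{2}})=4$, and from $S^{\sigma}\subseteq S^{\sigma^{2}}\subseteq S^{\sigma^{4}}$, all of cardinality $4$, we obtain $S^{\sigma}=S^{\sigma^{2}}=S^{\sigma^{4}}$, so the only points with non-trivial stabiliser are the four fixed points of $\sigma$. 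Applying Proposition~\ref{prop order 4} to $\sigma^{2}$ (its first case, as $e(S^{\sigma^{2}})=4$), these four points carry $\sigma^{2}$-eigenvalues $(i,i)$, $(-i,-i)$ and twice $(i,-i)$; in terms of $\sigma$ this means one unordered pair $\{a,b\}$ reduces to $\{1,1\}\bmod4$, one to $\{3,3\}$, and two are mixed.

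The main obstacle is that these Lefschetz constraints do not pin down the types by themselves: the configuration $2A_{8,1}+2A_{7}$ satisfies the holomorphic fixed point formula of Proposition~\ref{pro:Lefschetz formula} (for $\sigma$ and all its powers) and all the topological constraints just as well as $2A_{8,3}+2A_{8,5}$. To break the tie I would invoke the arithmetic origin of $\sigma$. An order-$8$ torsion forces $B$ to contain a primitive eighth root of unity $\lambda$, whence $k=\QQ(\sqrt2)$ and $\Trd(\lambda)=2\cos(\pi/4)=\sqrt2$, whose Galois conjugate is $-\sqrt2=2\cos(3\pi/4)$. Since the two embeddings $\rho_{1},\rho_{2}$ send $\sqrt2$ to $\pm\sqrt2$, the rotation angles of $\sigma$ in the two factors are the conjugate pair $\pi/4$ and $3\pi/4$. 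Consequently, at every fixed point the eigenvalues of $d\sigma$ are, up to ordering of the factors, of the form $(\zeta^{\pm1},\zeta^{\pm3})$, so $\{a,b\}$ always consists of one element of $\{1,7\}$ and one of $\{3,5\}$; then $q\equiv ab\in\{3,5\}$ and the spurious types $A_{8,1},A_{7}$ are excluded.

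With this in hand the counts follow. The $(i,i)$-point must be $\{1,5\}$ and the $(-i,-i)$-point $\{7,3\}$, each giving an $A_{8,5}$ singularity, while the two mixed points are $\{1,3\}$ and $\{7,5\}$; the holomorphic Lefschetz formula forces these to be distinct, since their contributions $\pm i/\sqrt2$ must cancel, so both occur and yield two $A_{8,3}$ singularities. Hence $S/\sigma$ has exactly $2A_{8,3}+2A_{8,5}$. It then remains to compute invariants as in the earlier propositions. One has $K_{S/\sigma}^{2}=K_{S}^{2}/8=1$; resolving an $A_{8,3}$ by a chain $(-3,-3)$ contributes $-1$ and resolving an $A_{8,5}$ by a chain $(-2,-3,-2)$ contributes $-\tfrac12$ to the self-intersection, so $K_{Z}^{2}=1-2\cdot1-2\cdot\tfrac12=-2$. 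Lemma~\ref{lem:The-Euler-number} gives $e(S/\sigma)=\tfrac18(4+7\cdot4)=4$, and adding the $2\cdot2+2\cdot3=10$ exceptional components yields $c_{2}(Z)=14$; finally $q=p_{g}=0$ as for every resolution of such a quotient.
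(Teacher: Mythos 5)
Your proof is correct in substance and, at the decisive step, takes a genuinely different route from the paper --- one that in fact repairs a flaw in the paper's own argument. The paper determines the local types purely analytically: it sums the holomorphic Lefschetz formula over all powers $\sigma^{k}$, $k=1,\dots,7$, obtains $\tfrac74 a_{1}+\tfrac54 a_{3}+\tfrac94 a_{5}+\tfrac{21}{4}a_{7}=7$, lists the solutions, and eliminates all but $(a_{1},a_{3},a_{5},a_{7})=(0,2,2,0)$ via the order-$4$ analysis. Your observation that the Lefschetz constraints alone do not exclude $2A_{8,1}+2A_{8,7}$ is right, and it points to an error in that computation: one has $\sum_{k=1}^{7}(1-\xi^{k})^{-2}=\frac{(n-1)(5-n)}{12}=-\tfrac74$ for $n=8$, not $+\tfrac74$ (compare the paper's own quotation of Zhang's $a_{1}(p)=\frac{5-p}{12}$). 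With the correct sign, $(2,0,0,2)$ and $(1,1,1,1)$ also solve the summed equation; the first (eigenvalue exponents $\{1,1\},\{7,7\},\{1,7\},\{1,7\}$) satisfies the holomorphic Lefschetz identity for every power of $\sigma$ and is compatible with the order-$4$ and order-$2$ propositions, so some input beyond Lefschetz really is needed. Your arithmetic input --- at each fixed point the two rotation numbers are Galois-conjugate primitive eighth roots of unity, and the conjugation inverts $\sqrt2=\xi_{8}+\xi_{8}^{-1}$, hence sends $\xi_{8}\mapsto\xi_{8}^{\pm3}$, forcing $q\equiv\pm3\bmod 8$ --- is the right remedy and closes the case analysis; the remaining bookkeeping and the computation of $K_{Z}^{2}=-2$, $c_{2}(Z)=14$, $q=p_{g}=0$ are correct.

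Two points should be tightened. First, you assume $\sigma$ preserves the two factors of $\HH\times\HH$, both to conclude $e(S^{\sigma})=4$ from the trivial action on $H^{2}$ and to run the arithmetic argument; this is the paper's standing convention in Section 4 (automorphisms in $N\Gamma/\Gamma$), but it must be invoked explicitly, since an abstract automorphism could involve the factor swap, in which case Corollary \ref{cor:lefschetz top} only gives $e(S^{\sigma})\in\{2,4\}$. Second, the Galois argument should be run pointwise rather than through a single global $\lambda$: the element of $N\Gamma$ in the class of $\sigma$ stabilizing a given fixed point is $\gamma_{p}=\delta\gamma$ with $\delta\in\Gamma$ depending on $p$, so its reduced trace, and hence the actual rotation angles, vary from point to point. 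What is invariant is that $\gamma_{p}$ has order $8$ in $B^{\ast}/k^{\ast}$, so the two eigenvalue ratios at the two infinite places are primitive eighth roots of unity interchanged by a Galois automorphism inverting $\sqrt2$; this yields exactly the relation $b\equiv\pm3a\bmod 8$ you use (and requires $\QQ(\sqrt2)\subseteq k$ with $\rho_{1},\rho_{2}$ differing on it, which is automatic for $k$ quadratic).
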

\begin{proof}
Let $p$ be a fixed point of $\sigma$ and Let $\xi_{(p)}$ be a primitive $8$-th root 
of unity such that $\sigma$ acts on $T_{S,p}$ with eigenvalues $\xi_{(p)},\xi_{(p)}^{q_{p}}$ for
$q_{p}\in\{0,\cdots,7\}$. Since there are no reflexions, we must have
$\xi_{(p)}^{j}\not=1$ and $\xi_{(p)}^{jq_{p}}\not=1$ for $j=1,\cdots,7$,
thus $q_{p}$ is prime to $2$ and $q_{p}\in\{1,3,5,7\}$.Let $a_{1},a_{3},a_{5}$
and $a_{7}$ be the number of fixed points with $q_{p}=1,3,5$ or
$7$ respectively. We have $\sum a_{i}=2$ or $4$. By summing over
the powers $\sigma^{k}$ for $k=1\dots7$ in the formula of the
Holomorphic Lefschetz Theorem, we get \[
7=\sum_{p\in S^{\sigma}}\sum_{k=1}^{k=7}\frac{1}{\det(1-d\sigma^{k}|T_{S,p})},\]
 and thus \[
7=\sum_{u=0}^{u=3}\sum_{k=1}^{k=7}\frac{a_{2u+1}}{(1-\xi^{k})(1-\xi^{k(2u+1)})}=\frac{7}{4}a_{1}+\frac{5}{4}a_{3}+\frac{9}{4}a_{5}+\frac{21}{4}a_{7}.\]
 The possibilities for $(a_{1},\dots,a_{4})$ are $(4,0,0,0)$, $(2,1,1,0)$,
$(1,0,0,1)$ and $(0,2,2,0)$.

For $t^{2}$ of order $4$, we have seen that the singularities of
$S/\sigma^{2}$ are $2A_{4,1}+2A_{4,3}$ or $A_{1}+2A_{4,3}$. Thus
the only possibility for $(a_{1},\dots,a_{4})$ is $(0,2,2,0)$, and
the singularities of $S/\sigma$ are $2A_{8,3}+2A_{8,5}$. The Euler
number of $S/\sigma$ is \[
e(S/\sigma)=\frac{1}{8}(4+7\cdot4)=4.\]
 Since $\frac{8}{3}=3-\frac{1}{3}$ and $\frac{8}{5}=2-\frac{1}{3-\frac{1}{2}}$
we get\[
e(Z)=4+2\cdot2+2\cdot3=14.\]
 It is easy to chek that a singularity $A_{8,3}$ decreases $K_{Z}^{2}$
by $1$ and a singularity $A_{8,5}$ decreases $K^{2}$ by $\frac{1}{2}$,
thus we obtain: $K_{Z}^{2}=\frac{8}{8}-2\cdot1-2\cdot\frac{1}{2}=-2$. 
\end{proof}

\begin{prop}
\label{pro:cyclique d'ordre 10}Let $S$ be a fake quadric with $G=\mathbb{Z}/10\mathbb{Z}\subset\aut(S)$.
The singularities of the quotient surface $X=S/G$ are $ $$2A_{10,1}+2A_{10,9}$.
The resolution $Z$ has the invariants:\[
K^{2}=-12,\, c_{2}=24,q=p_{g}=0.\]
\end{prop}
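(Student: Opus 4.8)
The plan is to reduce everything to an analysis of the fixed points of the powers of a generator $\sigma$ of $G\cong\ZZ/10\ZZ$, exploiting the two auxiliary automorphisms $\sigma^{5}$ (an involution) and $\sigma^{2}$ (of order $5$). First I would record that $\sigma$ has only finitely many fixed points by Theorem \ref{pro:Finite number fix pt}, that $\sigma^{5}$ has exactly $4$ fixed points by Proposition \ref{pro:The-involution-}, and that $\sigma^{2}$ has exactly $4$ fixed points by Proposition \ref{pro: Quo by 5}.

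The key preliminary step is to show that these three fixed loci coincide. Since $\sigma$ commutes with $\sigma^{5}$, it preserves the $4$-point set $S^{\sigma^{5}}$; as $(\sigma|_{S^{\sigma^{5}}})^{5}=\sigma^{5}|_{S^{\sigma^{5}}}=\mathrm{id}$, the induced permutation has order dividing $5$, and $S_{4}$ contains no element of order $5$, so this permutation is trivial. Hence $S^{\sigma^{5}}\subseteq S^{\sigma}$. Combined with $S^{\sigma}\subseteq S^{\sigma^{5}}$ and $\#S^{\sigma}\le 4$ (Corollary \ref{cor:lefschetz top}), this gives $S^{\sigma}=S^{\sigma^{5}}$, a set of $4$ points. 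The same cardinality count applied to $S^{\sigma}\subseteq S^{\sigma^{2}}$ with $\#S^{\sigma^{2}}=4$ yields $S^{\sigma}=S^{\sigma^{2}}$; in particular every fixed point of the order-$5$ automorphism $\sigma^{2}$ is already fixed by $\sigma$.

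Next I would analyse the local action. At a fixed point $p$ the group $\langle\sigma\rangle$ acts faithfully on $T_{S,p}$, without reflection and with isolated fixed points for all powers (Lemma \ref{lemme restriction sur les automor}); this forces both eigenvalues of $d\sigma_{p}$ to be primitive $10$-th roots of unity, so after normalising they are $(\zeta,\zeta^{q})$ with $q\in\{1,3,7,9\}$, giving an $A_{10,q}$ point. The eigenvalues of $\sigma^{2}$ at $p$ are then $(\zeta^{2},\zeta^{2q})=(\zeta_{5},\zeta_{5}^{q})$, so the $A_{5,\,q\bmod 5}$-type of $\sigma^{2}$ at $p$ reads off $q\bmod5$; since $q$ is odd, $q\equiv1$ forces $q=1$ and $q\equiv4$ forces $q=9$. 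For this surface the order-$5$ quotient realises the configuration $2A_{5,1}+2A_{5,4}$ (the case singled out in the remark following Proposition \ref{pro: Quo by 5}); as all four of its fixed points are $\sigma$-fixed, the two $A_{5,1}$ points lift to $A_{10,1}$ and the two $A_{5,4}$ points lift to $A_{10,9}$. Thus the singularities of $X=S/G$ are $2A_{10,1}+2A_{10,9}$.

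Finally I would read off the invariants. An $A_{10,1}$ point is resolved by a single $(-10)$-curve $C$, with $K_{Z}=\pi^{*}K_{X}-\tfrac{4}{5}C$ contributing $-\tfrac{32}{5}$ to $K_{Z}^{2}$, while $A_{10,9}=A_{9}$ is a rational double point resolved by a chain of nine $(-2)$-curves, contributing $0$. Since $K_{X}^{2}=K_{S}^{2}/10=\tfrac{4}{5}$, this gives $K_{Z}^{2}=\tfrac{4}{5}-2\cdot\tfrac{32}{5}=-12$. The four fixed points carry full stabiliser $\ZZ/10\ZZ$, so Lemma \ref{lem:The-Euler-number} yields $e(S/G)=\tfrac{1}{10}(4+9\cdot4)=4$, and adding the $2\cdot1+2\cdot9=20$ exceptional components gives $c_{2}(Z)=e(Z)=24$; the equalities $q=p_{g}=0$ are inherited from $S$ for any resolution of the quotient. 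The hard part is the third step: the holomorphic Lefschetz formula (Proposition \ref{pro:Lefschetz formula}) and Zhang's relation (Proposition \ref{prop Zhang formula}) applied to $\sigma$ and $\sigma^{2}$ only yield the relation $a_{1}=a_{9}$ between the numbers of $A_{10,1}$ and $A_{10,9}$ points and leave three numerically admissible configurations; pinning down $2A_{10,1}+2A_{10,9}$ genuinely requires the input that $\sigma^{2}$ has type $2A_{5,1}+2A_{5,4}$, which is a fact about the specific arithmetic example (the tangent-space eigenvalues of the order-$5$ generator under the two real embeddings $\rho_{1},\rho_{2}$).
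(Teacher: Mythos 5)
Your reduction to the three candidate configurations $4A_{10,3}$, $A_{10,1}+2A_{10,3}+A_{10,9}$ and $2A_{10,1}+2A_{10,9}$ is sound, and your argument that $S^{\sigma^{5}}\subseteq S^{\sigma}$ (because the permutation induced by $\sigma$ on the four fixed points of $\sigma^{5}$ has order dividing $5$ and $S_{4}$ has no $5$-cycles) is a clean way to see that $\sigma$ has exactly four fixed points. But the decisive final step has a genuine gap. To single out $2A_{10,1}+2A_{10,9}$ you invoke the fact that $\sigma^{2}$ has local types $2A_{5,1}+2A_{5,4}$, citing the remark following Proposition \ref{pro: Quo by 5}; in the paper that remark is itself \emph{deduced from} Proposition \ref{pro:cyclique d'ordre 10}, so this is circular, and you supply no independent computation of the tangent-space eigenvalues of the order-$5$ element. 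Worse, you conclude that pinning down the configuration "genuinely requires" example-specific arithmetic input, which would leave the proposition unproved as stated, since it is asserted for \emph{every} fake quadric carrying a $\ZZ/10\ZZ$-action.

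The missing idea is Noether's formula. Since $Z$ has $q=p_{g}=0$, one has $K_{Z}^{2}+c_{2}(Z)=12\chi(\OO_{Z})=12$. Computing the pair $(K_{Z}^{2},c_{2}(Z))$ for each of the three candidates exactly as you did for the last one gives $(-4,12)$, $(-8,18)$ and $(-12,24)$, with sums $8$, $10$ and $12$; only the last is divisible by $12$, so only $2A_{10,1}+2A_{10,9}$ can occur. This is how the paper concludes, with no arithmetic input beyond what you already used. (A minor further point: Zhang's formula, Proposition \ref{prop Zhang formula}, is stated only for automorphisms of prime order and does not apply directly to $\sigma$; the paper instead sums the holomorphic Lefschetz formula over the four generators of $\langle\sigma\rangle$, which yields the same three candidates and the same relation $r_{1}=r_{9}$ that you obtained by tracking $\sigma^{2}$.)
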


\begin{proof}
Let $\sigma$ be an automorphism of order $10$ acting on $S$. It
has $2$ or $4$ fixed points. As the involution $\sigma^{5}$ has
$4$ fixed points, $\sigma$ cannot have $2$ fixed points. Therefore:
\[
e(S/G)=\frac{1}{10}(4+(10-1).4)=4.\]
 Let $\xi$ be a primitive $5^{th}$-root of unity and $p$ a fixed
point. There exist $a=a(p)$ and $b=b(p)$ integers invertible mod
$5$ such that the action of $\sigma$ on $T_{S,p}$ has eigenvalues
$(-\xi^{a},-\xi{}^{ba})$. The Lefschetz holomorphic fixed point formula
yields\[
1=\sum_{p\in S^{\sigma}}\frac{1}{(1+\xi^{a})(1+\xi^{ab})}.\]
 For $b=1,2,3,4$, the sum $c(b)=\sum_{a=1}^{a=4}\frac{1}{(1+\xi^{a})(1+\xi^{ab})}$
is equal to $-4,1,1,6$, respectively. Recall again that $A_{10,3}=A_{10,7}$.
For $k\in\{1,3,9\}$, let $r_{k}$ be the number of points in $S^{\sigma}$
giving a $A_{10,k}$ singularity. By summing the Lefschetz fixed point
\[
4=-4r_{1}+r_{3}+6r_{9}.\]
 Taking care of the relation $r_{1}+r_{3}+r_{9}=4$, we have the following
possibilities for $(r_{1},r_{3},r_{9})$: $(0,4,0)$, $(1,2,1)$ and
$(2,0,2)$.

The resolution of a $A_{10,3}$-singularity is a chain of $3$ curves
$C_{2},C_{2}',C_{4}$ with intersection numbers $(-2)-(-2)-(-4)$.
We have to add $-\frac{1}{5}(C_{2}+C_{2}'+C_{4})$ to $\pi^{*}K_{S/G}$.
Each singularity contributes $(-\frac{1}{5}(C_{2}+C_{2}'+C_{4}))^{2}=-\frac{6}{5}$
to $K_{Z}^{2}$.\\
 Similarly, the resolution of a $A_{10,1}$-singularity is $(-10)$-curve
$C_{10}$. A $A_{10,1}$-singularity decreases $K_{S/G}^{2}$ by $(\frac{-8}{10}C_{10})^{2}=-\frac{32}{5}$.\\
 When the singularities of $S/G$ are respectively $4A_{10,3},\, A_{10,1}+2A_{10,3}+A_{10,9}$
and $2A_{10,1}+2A_{10,9}$, we have: $K_{Z}^{2}=\frac{8}{10}-4\frac{6}{5}=-4$,
$ $$K_{Z}^{2}=\frac{8}{10}-\frac{32}{5}-2.\frac{6}{5}-0=-8$ and
$K_{Z}^{2}=\frac{8}{10}-2\frac{32}{5}=-12$. The Euler number of $Z$
is respectively $4+4\cdot2=12$, $4+1+2\cdot2+9=18$ and $4+2+2\cdot9=24$.
Only the last case is possible because $12$ has to divide $K_{Z}^{2}+e(Z)$. \end{proof}

\begin{prop}
\label{pro:quotient by Klein 4 grup}Let be $G=(\mathbb{Z}/2\mathbb{Z})^{2}$.
The quotient surface $X=S/G$ contains $6\ A_{1}$ singularities.
The surface $Z$ is minimal of general type and has the invariants:
\[
K_{Z}^{2}=2,\, c_{2}=10,\, q=p_{g}=0.\]
\end{prop}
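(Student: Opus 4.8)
The plan is to analyze the three involutions $\sigma_1,\sigma_2,\sigma_3=\sigma_1\sigma_2$ generating $G=(\mathbb{Z}/2\mathbb{Z})^2$ and to locate every point of $S$ with non-trivial stabilizer. By Proposition \ref{pro:The-involution-} each $\sigma_i$ has exactly $4$ fixed points. The first and main step is to show that $G$ has no global fixed point, equivalently that the three fixed loci $S^{\sigma_1},S^{\sigma_2},S^{\sigma_3}$ are pairwise disjoint. Indeed, a point fixed by two of the $\sigma_i$ would be fixed by all of $G$, and at such a point $s$ Lemma \ref{lemme restriction sur les automor} forces a faithful action of $(\mathbb{Z}/2\mathbb{Z})^2$ on the tangent plane $T_{S,s}\cong\mathbb{C}^2$ with no reflection. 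But any faithful two-dimensional complex representation of $(\mathbb{Z}/2\mathbb{Z})^2$ splits as a sum of two \emph{distinct} non-trivial characters; the non-trivial element lying in the kernel of one of them then acts with eigenvalues $(1,-1)$, i.e.\ as a reflection, a contradiction. Hence the $12$ special points are distinct and each has stabilizer of order exactly $2$.

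Second, I would determine the $G$-orbits of these $12$ points. Since $\sigma_1$ and $\sigma_2$ commute, $\sigma_2$ permutes $S^{\sigma_1}$; as no point of $S^{\sigma_1}$ is fixed by $\sigma_2$, this permutation is free and groups the $4$ points of $S^{\sigma_1}$ into $2$ orbits of size $2$. Each orbit maps to a single point of $X=S/G$ whose stabilizer is $\langle\sigma_1\rangle$, acting on the tangent plane as $-\mathrm{id}$ (the only linear involution of $\mathbb{C}^2$ with an isolated fixed point), hence producing an $A_1$ singularity. Running this argument over $\sigma_1,\sigma_2,\sigma_3$ yields $2+2+2=6$ singularities of type $A_1$ on $X$ and no others, since the $12$ points exhaust the locus of non-trivial stabilizers.

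Finally I would compute the invariants. By Lemma \ref{lem:The-Euler-number}, with $e(S)=c_2(S)=4$ and $S_2$ consisting of the $12$ points above, $e(X)=\tfrac14(4+(2-1)\cdot 12)=4$. Each of the six $A_1$ singularities is resolved by a single $(-2)$-curve, so $c_2(Z)=e(X)+6=10$. For the canonical square one has $K_X^2=K_S^2/|G|=8/4=2$, and since an $A_1$ singularity is resolved by a $(-2)$-curve $C$ with $K_Z\cdot C=-2-C^2=0$, the resolution satisfies $K_Z=\pi^\ast K_X$ and therefore $K_Z^2=K_X^2=2$; Lemma \ref{lemme q=00003D00003D00003Dpg=00003D00003D00003D0} gives $q=p_g=0$. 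As $K_Z^2=2>0$, Lemma \ref{lem: when has general type} shows that $Z$ is of general type, and it is minimal because $K_Z=\pi^\ast K_X$ is the pullback of the ample, hence nef, $\mathbb{Q}$-divisor $K_X$. The only genuine obstacle is the representation-theoretic exclusion of a common fixed point in the first step; everything afterwards is orbit counting together with the standard $A_1$-resolution bookkeeping.
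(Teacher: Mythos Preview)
Your proposal is correct and follows essentially the same line as the paper's proof: rule out a global fixed point via Lemma \ref{lemme restriction sur les automor} (any faithful $2$-dimensional representation of $(\mathbb{Z}/2\mathbb{Z})^2$ contains a reflection), count the $12$ points with order-$2$ stabilizer coming from the three involutions and their images as $6A_{1}$ singularities, and then compute the invariants and minimality exactly as in the paper. Your write-up is more explicit about the orbit counting and the representation-theoretic step, but the argument is the same.
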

\begin{proof}
A faithful representation of $G$ on a $2$-dimensional space contains
reflections, therefore by Lemma \ref{lemme restriction sur les automor},
there are no points fixed by the whole $G$. The group $G$ contains
$3$ involutions. Each of these involutions has $4$ isolated fixed
points whose image in $X$ are $2A_{1}$ singularities. Thus there
are $6A_{1}$ singularities on $X=S/G$ and we have \[
e(Z)=e(S/G)+6=\frac{1}{4}(4+12)+6=10.\]
 Moreover, $K_{Z}=\pi^{*}K_{S/G}$ is nef and $K_{S/G}^{2}=K_{S}^{2}/4=2$.
By Lemma \ref{lemme q=00003D00003D00003Dpg=00003D00003D00003D0},
we have $q=p_{g}=0$.\end{proof}

\begin{rem}
a) As Miles Reid pointed out to us, a minimal surface of general type
with $c_{1}^{2}=2c_{2}=8,\, p_{g}=0$ and automorphism group containing
$G=(\mathbb{Z}/2\mathbb{Z})^{3}$ such that each involution has only
isolated points must deform, therefore $(\mathbb{Z}/2\mathbb{Z})^{3}$
cannot be a subgroup of the automorphism group of a quaternionic fake
quadric which are rigid surfaces.
\\
b) For $G=\mathbb{Z}/4\mathbb{Z} \times \mathbb{Z}/2\mathbb{Z}$, the quotient surface $S/G$ has singularities $2A_1 +2A_3$ and the desingularisation $Z$ has invariants $K_{Z}^{2}=1,c_{2}=11,q=p_{g}=0$. We do not know if a fake quadric $S$ with such automorphism subgroup exists.
\end{rem}

\begin{prop}
\label{pro:Let-be-Dihedral D4}Let be $G=\mathbb{D}_{4}$ acting on
the fake quadric $S$. The singularities of $S/G$ are $4A_{1}+A_{4,3}+A_{4,1}$.
The resolution $Z$ of the quotient surface has invariants: \[
K_{Z}^{2}=0,\, c_{2}(Z)=12,\, q=p_{g}=0.\]
 The elements of order 4 in $\mathbb{D}_{4}$ have $4$ fixed points.\end{prop}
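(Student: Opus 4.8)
The plan is to pin down the full fixed-point data of the $\mathbb{D}_4$-action and then read off the singularities of the quotient and the invariants of its resolution; the only delicate point is the number of fixed points of the order-$4$ elements, which I would settle by a parity argument. Write $\mathbb{D}_4=\langle g,\tau\mid g^4=\tau^2=1,\ \tau g\tau=g^{-1}\rangle$, so $g^2$ is the central involution and the remaining four involutions are the reflections $\tau g^k$. By Proposition \ref{pro:The-involution-} each of these five involutions has exactly $4$ isolated fixed points, and by Lemma \ref{lemme restriction sur les automor} no nontrivial element acts on a tangent space with a reflection. Since neither a Klein four subgroup nor $\mathbb{D}_4$ itself admits a faithful $2$-dimensional representation without reflections, that lemma forbids any point of $S$ from having stabilizer equal to a Klein four subgroup or to all of $\mathbb{D}_4$; hence the nontrivial stabilizers that occur are exactly $\langle g\rangle$ and the order-$2$ subgroups.

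The key step is to prove that $g$ (and hence $g^3$) has $4$ fixed points. As $g^2$ is central, $g$ and every reflection permute the $4$-element set $F_{g^2}=\{Q_1,Q_2,Q_3,Q_4\}$, giving a homomorphism $\phi\colon\mathbb{D}_4\to\mathfrak{S}_4$ with $\phi(g^2)=\mathrm{id}$. Because a reflection $\tau g^k$ and $g^2$ generate a Klein four group, they have no common fixed point, so $\tau g^k$ fixes none of the $Q_i$; thus $\phi(\tau g^k)$ is a fixed-point-free involution of a $4$-element set, i.e.\ an \emph{even} double transposition. Now $F_g\subseteq F_{g^2}$ and by Proposition \ref{prop order 4} the element $g$ has $2$ or $4$ fixed points. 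If it had only $2$, then $\phi(g)$ would be a single transposition, hence \emph{odd}, and $\phi(\tau g)=\phi(\tau)\phi(g)$ would be odd, contradicting the previous sentence. Therefore $g$ fixes all four points and $F_g=F_{g^2}$.

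With $|F_g|=4$ in hand I would describe the singularities of $X=S/\mathbb{D}_4$ as the $\mathbb{D}_4$-orbits of points with nontrivial stabilizer, the local type at each being the $A_{n,q}$ determined by the stabilizer acting on the tangent space. The four points $Q_i$ have stabilizer $\langle g\rangle$, and by the first case of Proposition \ref{prop order 4} exactly one has $g$-eigenvalues $(i,i)$, one has $(-i,-i)$, and two have $(i,-i)$. Differentiating $\tau g=g^{-1}\tau$ shows that $\tau$ swaps the $(i,i)$- and $(-i,-i)$-points and interchanges the two $(i,-i)$-points (it cannot fix either, since that would yield a full $\mathbb{D}_4$-stabilizer), so $\{Q_i\}$ splits into two orbits, giving one $A_{4,1}$ and one $A_{4,3}=A_3$ singularity. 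The other non-free points are the $16$ fixed points of the four reflections; these are pairwise distinct and disjoint from $\{Q_i\}$ (any two reflections, or a reflection and $g^2$, share no fixed point), each has an order-$2$ stabilizer, and they form $16/4=4$ orbits, producing $4A_1$. Hence $X$ carries the singularities $4A_1+A_{4,3}+A_{4,1}$.

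Finally I would compute the invariants. By Lemma \ref{lem:The-Euler-number}, with $4$ points of stabilizer order $4$ and $16$ of stabilizer order $2$, $e(X)=\tfrac18(4+3\cdot 4+1\cdot 16)=4$, while $K_X^2=K_S^2/8=1$. Resolving, each $A_1$ (one $(-2)$-curve) and the $A_{4,3}$ (a chain of three $(-2)$-curves) have discrepancy $0$, whereas the $A_{4,1}$ (a single $(-4)$-curve) contributes $-1$ to $K_Z^2$ exactly as in Proposition \ref{prop order 4}; summing the $4+3+1=8$ exceptional components gives $c_2(Z)=e(Z)=4+8=12$ and $K_Z^2=1-1=0$, and $q=p_g=0$ by Lemma \ref{lemme q=00003D00003D00003Dpg=00003D00003D00003D0}. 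The main obstacle is the fixed-point count for the order-$4$ elements; once the parity argument excludes the $2$-fixed-point case, the remainder is orbit bookkeeping together with the standard discrepancy calculation.
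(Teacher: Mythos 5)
Your proof is correct, and it reaches the same singularity configuration and invariants as the paper, but it settles the one genuinely delicate point — that the order-$4$ elements have $4$ rather than $2$ fixed points — by a different mechanism. The paper runs both cases through the orbifold Euler-number formula of Lemma \ref{lem:The-Euler-number} and discards the $2$-fixed-point case because it would give $e(S/G)=\tfrac{1}{8}(4+18+6)=\tfrac{7}{2}\notin\mathbb{Z}$; you instead exploit the permutation representation of $\mathbb{D}_4$ on the four fixed points of the central involution $g^{2}$, note that every reflection must act there as a fixed-point-free involution (hence an even permutation, since a Klein four stabilizer is excluded by Lemma \ref{lemme restriction sur les automor}), and derive a sign contradiction if $\phi(g)$ were a single transposition. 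Both arguments are sound; the paper's is shorter and purely numerical, while yours is more structural and has the side benefit of directly exhibiting $F_{g}=F_{g^{2}}$ and the orbit decomposition of the non-free locus, which you then use to justify the splitting into $A_{4,1}+A_{4,3}+4A_{1}$ more explicitly than the paper does (the paper essentially asserts the singularity list once $\operatorname{Fix}(t)=\{p_{1},ap_{1},p_{2},ap_{2}\}$ is known). Your eigenvalue bookkeeping for how $\tau$ exchanges the $(i,i)$- and $(-i,-i)$-points, and the final computations of $e(Z)=12$ and $K_{Z}^{2}=\tfrac{8}{8}-1=0$, agree with the paper's.
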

\begin{proof}
Let be $t,a$ be generators of $\mathbb{D}_{4}$ such that $t^{4}=1$,
$a^{2}=1$ and $at=t^{3}a$. The elements of order $4$ are $t$ and
$t^{3}$. The elements of order $2$ are $a,ta,t^{2}a,t^{3}a$ and
$t^{2}$.

There cannot be a point of $S$ that is fixed by the whole group $G$
because any faithful $2$-dimensional representation of $G$ contains
a reflection $(x,y)\to(x,-y)$ and thus such a point would lie on a curve fixed 
by an involution. But an automorphism of $S$ has only isolated fixed points.

First case: let us suppose that $t$ has $4$ fixed points : $Fix(t)=\{p_{1},ap_{1},p_{2},ap_{2}\}$.
The Euler number of $S/G$ is \[
e(S/G)=\frac{1}{8}(4+(2-1)(4\cdot4)+(4-1)4)=4.\]
 The singularities on $S/G$ are $4A_{1}+A_{4,3}+A_{4,1}$ thus \[
e(Z)=4+4+3+1=12.\]
 Moreover $K_{Z}^{2}=\frac{8}{8}+(-\frac{1}{2})^{2}(-4)=0.$

Second case: suppose that $t$ has $2$ fixed points : $Fix(t)=\{p_{1},ap_{1}\}$.
The Euler number of $e(S/G)$ would be \[
\frac{1}{8}(4+(2-1)(18)+(4-1)2)=\frac{7}{2}\]
 but this is not an integer. \end{proof}
\begin{prop}
Suppose that the dihedral group $\mathbb{D}_{8}$ of order $16$ acts
on fake quadric $S$. The singularities of $S/\mathbb{D}_{8}$ are
$4A_{1}+A_{8,3}+A_{8,5}$. The resolution $Z$ of the quotient surface
has invariants: \[
K_{Z}^{2}=-1,\, c_{2}(Z)=13,\, q=p_{g}=0.\]
\end{prop}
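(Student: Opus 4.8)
The plan is to follow the same two-part bookkeeping used for $\mathbb{Z}/8\mathbb{Z}$ and for $\mathbb{D}_{4}$ (Proposition \ref{pro:Let-be-Dihedral D4}), isolating the contribution of the cyclic rotation subgroup from that of the reflections. Write $\mathbb{D}_{8}=\langle t,a\mid t^{8}=a^{2}=1,\ ata=t^{-1}\rangle$, so that $t,t^{3},t^{5},t^{7}$ have order $8$, $t^{2},t^{6}$ have order $4$, and the remaining non-trivial elements --- the central rotation $t^{4}$ and the eight reflections $t^{k}a$ --- are involutions. The first point to record is that every stabilizer is cyclic: any non-cyclic subgroup of $\mathbb{D}_{8}$ contains a Klein four group, and by Lemma \ref{lemme restriction sur les automor} such a group cannot act faithfully and without reflections on a tangent space $T_{S,s}$ (this is precisely the argument of Propositions \ref{pro:quotient by Klein 4 grup} and \ref{pro:Let-be-Dihedral D4}).

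Next I would treat the rotation subgroup $\langle t\rangle\cong\mathbb{Z}/8\mathbb{Z}$. By the analysis of the $\mathbb{Z}/8\mathbb{Z}$ case, $t$ has four fixed points and $S/\langle t\rangle$ carries the singularities $2A_{8,3}+2A_{8,5}$; since every non-trivial power of $t$ generates a subgroup whose fixed locus contains $\mathrm{Fix}(t)$, and since the involution $t^{4}$ has exactly four fixed points (Proposition \ref{pro:The-involution-}), all non-trivial rotations share the same four-point set $P=\mathrm{Fix}(t^{4})$, on which the stabilizer is the full $\langle t\rangle$. Because $ata=t^{-1}$, the reflection $a$ permutes $P$ and sends a point where $t$ has eigenvalues $\{\lambda_{1},\lambda_{2}\}$ to one where $t$ has eigenvalues $\{\lambda_{1}^{-1},\lambda_{2}^{-1}\}$; this preserves the singularity type, so $a$ pairs the two $A_{8,3}$-points together and the two $A_{8,5}$-points together. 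Moreover no reflection can fix a point of $P$, for together with the commuting involution $t^{4}$ it would generate a Klein four group fixing that point. Hence $P$ forms two $G$-orbits, contributing $A_{8,3}+A_{8,5}$ to $S/\mathbb{D}_{8}$.

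I would then account for the reflections. Each of the eight involutions $t^{k}a$ has four fixed points, acting there as $-\mathrm{id}$ (Proposition \ref{pro:The-involution-} together with Lemma \ref{lemme restriction sur les automor}), none lying in $P$; and the fixed loci of distinct reflections $t^{k}a,t^{j}a$ are disjoint, since a common fixed point would be fixed by the rotation $t^{k-j}$ and hence forced into $P$. This gives $32$ points with stabilizer of order $2$, falling into $4$ orbits of size $8$, i.e. $4A_{1}$. Altogether the singularities of $S/\mathbb{D}_{8}$ are $4A_{1}+A_{8,3}+A_{8,5}$. The invariants now follow: by Lemma \ref{lem:The-Euler-number}, $e(S/G)=\frac{1}{16}(4+(8-1)\cdot4+(2-1)\cdot32)=4$, and adding the exceptional components ($1$ for each $A_{1}$, two for $A_{8,3}$ since $\tfrac{8}{3}=3-\tfrac13$, three for $A_{8,5}$ since $\tfrac{8}{5}=2-\tfrac{1}{3-\frac12}$) gives $c_{2}(Z)=4+4+2+3=13$. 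For $K_{Z}^{2}$ I would start from $K_{X}^{2}=K_{S}^{2}/|G|=8/16=\tfrac12$; the canonical $A_{1}$'s contribute nothing, while, exactly as in the $\mathbb{Z}/8\mathbb{Z}$ computation, each $A_{8,3}$ lowers $K_{Z}^{2}$ by $1$ and each $A_{8,5}$ by $\tfrac12$, whence $K_{Z}^{2}=\tfrac12-1-\tfrac12=-1$ (consistent with $K_{Z}^{2}+c_{2}(Z)=12$). Finally $q=p_{g}=0$ by Lemma \ref{lemme q=00003D00003D00003Dpg=00003D00003D00003D0}.

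The hard part will be the incidence bookkeeping rather than any single formula: one must verify that the reflection fixed points are disjoint from $P$ and from one another, that the stabilizers are exactly of orders $2$ and $8$, and --- most delicately --- that the reflection conjugating $t$ to $t^{-1}$ preserves the $A_{8,3}$/$A_{8,5}$ distinction instead of interchanging the two types, since it is this fact that forces the two rotation orbits to produce one $A_{8,3}$ and one $A_{8,5}$ rather than two copies of a single type. Once these facts are established, the Euler-number and $K^{2}$ computations are routine.
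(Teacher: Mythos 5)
Your proof is correct and follows essentially the same route as the paper's: the rotation subgroup $\langle t\rangle$ is handled via the $\mathbb{Z}/8\mathbb{Z}$ analysis, the eight reflections contribute $32$ points in four orbits giving $4A_{1}$, and the Euler number and $K_{Z}^{2}$ computations agree. You in fact supply several verifications the paper leaves implicit (that all non-trivial rotations share the fixed set $P$, that stabilizers are cyclic, and in particular that conjugation by $a$ inverts the eigenvalues of $t$ and hence preserves the $A_{8,3}$/$A_{8,5}$ type rather than swapping it), which is a genuine strengthening of the exposition rather than a different method.
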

\begin{proof}
Let be $t,a$ be generators of $\mathbb{D}_{8}$ such that $t^{8}=a^{2}=1$
and $at=t^{7}a$. Order $8$ elements in $G$ are $t,t^{3},t^{5},t^{7}$,
order $4$ elements are $t^{2},\, t^{6}$, order $2$ elements are
$a,ta,t^{2}a,t^{3}a,t^{4}a,t^{5}a,t^{6}a,t^{7}a$ and $t^{4}$.

By the discussion on order $8$ elements, $t$ has $4$ fixed points $Fix(t)=\{p_{1},ap_{1},p_{2},ap_{2}\}$.
Let $p$ be a fixed point of an involution $\sigma\not=t^{4}$. The
orbit of $p$ under $G$ has $8$ elements, each is a fixed point
of an involution $\not=t^{4}$. The quotient surface has $\frac{1}{8}\cdot8\cdot4A_{1}+A_{8,3}+A_{8,5}$
singularities. We have\[
e(S/G)=\frac{1}{16}(4+1\cdot(8\cdot4)+7\cdot4)=4\]
 and $e(Z)=4+4+2+3=13$. Moreover $K_{Z}^{2}=\frac{8}{16}-1-\frac{1}{2}=-1$.
\end{proof}

\section{Reconstruction of a surface knowing its quotient.}

In \cite{Miyaoka}, Miyaoka gives a bound on the number of disjoint
$(-2)$-curves on a minimal smooth surface $Y$. This implies in particular
that if $c_{1}^{2}=4$ or $2$ and $\chi(\mathcal{O}_{Y})=1$, there
are at most $4$ and $6$ such curves respectively. The surfaces we
obtained as quotient of quaternionic fake quadrics reach that bound.
For the cases $c_{1}^{2}=2$ these surfaces seems to be the first
known ones with this property.

In \cite{Dolgachev} Dolgachev, Mendes Lopes, Pardini study rational
surfaces with the maximal number of $(-2)$-curves. For that aim they
use and developpe the theory of $(\mathbb{Z}/2\mathbb{Z})^{n}$-covers
ramified over $A_{1}$ singularities. Using their results, we obtain: 
\begin{prop}
\label{pro:if enought nodes}Let $Y$ be a smooth minimal surface
of general type with $q=p_{g}=0$ and $_{2}Pic(Y)=0$. \\
 a) If $c_{1}(Y)^{2}=4$, $c_{2}(Y)=8$ and $Y$ contains $4$
disjoint $(-2)$-curves $C_{1},\dots,C_{4}$, then there exist a double
cover of $Y$ ramified over the curves $C_{i}$. The minimal model
of this covering has invariants $c_{1}^{2}=2c_{2}=8$ and $q\leq1$.
\\
 b) If $c_{1}(Y)^{2}=2$, $c_{2}(Y)=10$ and $Y$ contains $6$
disjoint $(-2)$-curves $C_{1},\dots,C_{6}$, then there exist a bi-double
cover of $Y$ ramified over the curves $C_{i}$. The minimal model
of this covering has invariants $c_{1}^{2}=2c_{2}=8$. 
\end{prop}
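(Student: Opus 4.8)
The strategy in both cases is to realise the desired surface with $c_{1}^{2}=2c_{2}=8$ as an abelian cover of $Y$ branched exactly along the $(-2)$-curves, using the $(\ZZ/2\ZZ)^{n}$-cover formalism of Dolgachev--Mendes Lopes--Pardini \cite{Dolgachev}, and then to reach the minimal model by contracting the rational curves lying over the $C_{i}$. The numerics are arranged so that these curves are $(-1)$-curves, and contracting them lands precisely on the invariants of a fake quadric.

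For part a) I would set $C=C_{1}+\dots+C_{4}$ and first produce a line bundle $L$ with $2L\equiv C$; the double cover ramified over $C$ exists precisely when $C$ is $2$-divisible in $Pic(Y)$, and the hypothesis ${}_{2}Pic(Y)=0$ then forces $L$ to be unique. To get the divisibility I would appeal to the theory of even sets of nodes in \cite{Dolgachev}: the four disjoint $(-2)$-curves realise the Miyaoka bound for $c_{1}^{2}=4,\ \chi=1$, so $\{C_{i}\}$ is an even set, i.e. $C$ reduces to $0$ in $H^{2}(Y,\FF_{2})$, which under ${}_{2}Pic(Y)=0$ is equivalent to $C\in 2\,Pic(Y)$. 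Granting $L$, form the smooth double cover $\pi\colon\widetilde{S}\to Y$ with $\pi_{*}\mathcal{O}_{\widetilde{S}}=\mathcal{O}_{Y}\oplus L^{-1}$ and branch $C$ (smooth, being a disjoint union of smooth rational curves). Using $K_{Y}\cdot C_{i}=0$ and $L^{2}=\tfrac14 C^{2}=-2$ one computes $K_{\widetilde{S}}^{2}=2(K_{Y}+L)^{2}=4$, $\chi(\mathcal{O}_{\widetilde{S}})=2\chi(\mathcal{O}_{Y})+\tfrac12 L(L+K_{Y})=1$ and $e(\widetilde{S})=2e(Y)-e(C)=16-8=8$. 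Each $C_{i}$ is totally ramified, so its reduced preimage $\widetilde{C}_{i}\cong\PP^{1}$ has $\widetilde{C}_{i}^{2}=-1$; contracting the four $(-1)$-curves gives $S$ with $K_{S}^{2}=4+4=8$, $e(S)=8-4=4$. Since $\widetilde{S}$ is a finite cover of the general-type surface $Y$ it is of general type, and $S$ is its minimal model because $K_{Y}+L\equiv K_{Y}+\tfrac12 C$ is nef on every curve other than the $C_{i}$. For the irregularity, Serre duality together with $\chi(Y,-L)=0=h^{0}(Y,-L)$ gives $q(\widetilde{S})=h^{1}(Y,-L)=h^{0}(Y,K_{Y}+L)$, and the bound $q\le 1$ I would extract from the analysis of the anti-invariant forms in \cite{Dolgachev}.

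Part b) is the same construction one step higher. I would split the six curves into three pairs $D_{1}=C_{1}+C_{2}$, $D_{2}=C_{3}+C_{4}$, $D_{3}=C_{5}+C_{6}$ and build a $(\ZZ/2\ZZ)^{2}$-cover from building data $L_{1},L_{2},L_{3}$ with $2L_{i}\equiv D_{j}+D_{k}$; the existence of the $L_{i}$ is the even-set condition for the two independent $4$-node sums, and ${}_{2}Pic(Y)=0$ again makes them unique. The three intermediate double covers are branched over the $4$-curve sets $D_{j}+D_{k}$, and since the $C_{i}$ are pairwise disjoint the cover $\pi\colon\widetilde{S}\to Y$ is smooth. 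With $L_{i}^{2}=-2$ and $L_{i}K_{Y}=0$, Pardini's formulas give $\chi(\mathcal{O}_{\widetilde{S}})=4\chi(\mathcal{O}_{Y})+\tfrac12\sum_{i}L_{i}(L_{i}+K_{Y})=4-3=1$ and $K_{\widetilde{S}}^{2}=4\big(K_{Y}+\tfrac12\textstyle\sum_{m}D_{m}\big)^{2}=4(2-3)=-4$, while $e(\widetilde{S})=4\,e\big(Y\setminus\bigcup_{i}C_{i}\big)+\sum_{i}e(\widetilde{C}_{i})=-8+24=16$. Here each $C_{i}$ lies in a single $D_{m}$, so its preimage is a disjoint pair of $(-1)$-curves; contracting the twelve of them yields $S$ with $K_{S}^{2}=-4+12=8$ and $e(S)=16-12=4$, i.e. $c_{1}^{2}=2c_{2}=8$, minimal of general type by the same nefness argument.

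The decisive step in each part is the very first one: exhibiting the square-root bundle(s), that is, showing the extremal configuration of $(-2)$-curves is an even set so that the cover exists at all. This is where the Miyaoka bound and ${}_{2}Pic(Y)=0$ are genuinely used, and it is the heart of the matter; everything downstream is the abelian-cover bookkeeping of \cite{Dolgachev} together with Riemann--Roch and Noether's formula. The only other delicate point is the bound $q\le 1$ in part a): it is not implied by the invariants of $S$ alone --- minimal surfaces of general type with $K^{2}=8$, $\chi=1$ and $q=2$ do exist --- so it must be read off from the covering structure rather than from numerology.
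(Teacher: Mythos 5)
Your overall strategy --- build the $(\ZZ/2\ZZ)^{r}$-cover of Dolgachev--Mendes Lopes--Pardini branched along the $C_{i}$ and then contract the $(-1)$-curves lying over them --- is exactly the paper's, and your numerical bookkeeping (the values of $K^{2}$, $e$ and $\chi$ before and after contraction, in both cases) is correct. But the step you yourself single out as decisive, the $2$-divisibility of the relevant sums of the $C_{i}$, is not actually proved: you assert that the four disjoint $(-2)$-curves ``realise the Miyaoka bound \ldots\ so $\{C_{i}\}$ is an even set''. There is no such implication; the Miyaoka bound is an inequality of orbifold Chern-class type and says nothing about divisibility in $Pic(Y)$. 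The argument the paper uses is lattice-theoretic: since $q=p_{g}=0$ and $_{2}Pic(Y)=0$, the space $Pic(Y)\otimes\FF_{2}$ has dimension $b_{2}(Y)=6$ (resp.\ $8$) and carries a non-degenerate intersection pairing; the classes of disjoint $(-2)$-curves span a totally isotropic subspace, hence one of dimension at most $3$ (resp.\ $4$), so the map $\psi:\FF_{2}^{k}\to Pic(Y)\otimes\FF_{2}$ has kernel of dimension $r\geq1$ (resp.\ $\geq2$). Combined with the fact that an even set of disjoint nodal curves has cardinality divisible by $4$, this pins the kernel down to $\langle(1,1,1,1)\rangle$ in case a); in case b) Beauville's lemma ($k\geq2^{r}-1$) caps $r$ at $2$ and identifies the kernel, up to permutation, as $\langle(1,1,1,1,0,0),(1,1,0,0,1,1)\rangle$ --- which is exactly your pairing into $D_{1},D_{2},D_{3}$, but you assume that shape rather than derive it. Without this argument the cover you want to build is not known to exist, so the heart of the proof is missing.

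The second gap is the bound $q\leq1$ in part a). You rightly observe that it does not follow from the invariants alone, but you defer it to ``the analysis of the anti-invariant forms'' in Dolgachev--Mendes Lopes--Pardini; that paper treats rational surfaces, where no irregularity can appear upstairs in this way, so the citation cannot carry the weight. The paper proves the bound separately: the covering involution acts by $-1$ on $H^{0}(\Omega^{1})$, hence trivially on $\wedge^{2}H^{0}(\Omega^{1})$, and since $p_{g}(Y)=0$ the map $\wedge^{2}H^{0}(\Omega^{1})\to H^{0}(K)$ vanishes; if $q\geq2$, Castelnuovo--De Franchis yields an irrational pencil, Zucconi's classification then forces $q=2$ and an \'etale genus-$2$ bundle with $K^{2}=8$, and this is excluded by comparing the double fibres of the induced genus-$2$ fibration on the contracted $Y$ with its four nodes. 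Some argument of this kind has to be supplied; it is not in the reference you lean on.
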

Let $\mathbb{F}_{2}$ be the field with $2$ elements. Let be $C_{1},\dots,C_{k}$
be $k$ $(-2)$-curves on a smooth surface $Y$. Let \[
\psi:\mathbb{F}_{2}{}^{k}\to Pic(Y)\otimes\mathbb{F}_{2}\]
 be the homomorphism sending $v=(v_{1},\dots,v_{k})$ to $\sum v_{i}C_{i}$.
We say that the curve $C_{j}$ appears in the kernel $\ker\psi$ if
there is a vector $v=(v_{1},\dots,v_{k})$ in $\ker\psi$ such that
$v_{j}=1$. For $v$ is in $\ker\psi$ we denote by $L_{v}$ an element
of $Pic(Y)$ such that $2L_{v}=\sum v_{i}C_{i}$ (we sometimes identify
elements of $\mathbb{F}_{2}$ to $0,1$ in $\mathbb{Z}$). We have: 
\begin{prop}
\label{pro:Galois group}(\cite{Dolgachev}, Proposition 2.3). Suppose
that $_{2}Pic(Y)$ is zero. There exists a unique smooth connected
Galois cover $\pi:Z\to Y$ such that the Galois group of $\pi$ is
$G=Hom(\ker\psi,\mathbb{G}_{m})$, the branch locus of $\pi$ is the
union of the $C_{i}$ appearing in $\ker\psi$ and the surface $\bar{Z}$
obtained by contracting the $(-1)$-curves over the $(-2)$-curves
in $Y$ has invariants: \[
K_{\bar{Z}}^{2}=2^{r}K_{Y}^{2}\, c_{2}(\bar{Z})=\,\chi(\mathcal{O}_{\bar{Z}})=2^{r}\chi(\mathcal{O}_{Y})-k2^{r-3},\,\kappa(\bar{Z})=\kappa(Y)\]
 where $r=\dim V$.\end{prop}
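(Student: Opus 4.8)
The plan is to realize $\pi\colon Z\to Y$ as an abelian $(\ZZ/2\ZZ)^{r}$-cover in the sense of Pardini, whose building data is completely forced by the curves $C_{i}$ and by the hypothesis ${}_{2}Pic(Y)=0$, and then to read off all invariants from the eigensheaf decomposition of $\pi_{*}\mathcal{O}_{Z}$. First I would fix the building data. For each $v\in V:=\ker\psi$ the absence of $2$-torsion in $Pic(Y)$ produces a \emph{unique} $L_{v}\in Pic(Y)$ with $2L_{v}\equiv\sum_{v_{i}=1}C_{i}=:B_{v}$, and $L_{0}=\mathcal{O}_{Y}$. For $v,w\in V$ one checks $L_{v}+L_{w}-L_{v+w}\equiv\sum_{v_{i}=w_{i}=1}C_{i}$, again using ${}_{2}Pic(Y)=0$, so there is a canonical section $s_{v,w}$ of $\mathcal{O}_{Y}(\sum_{v_{i}=w_{i}=1}C_{i})$, namely the product of the defining equations $x_{i}$ of the curves with $v_{i}=w_{i}=1$. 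I would then set $\mathcal{A}=\bigoplus_{v\in V}L_{v}^{-1}$ with multiplication $L_{v}^{-1}\otimes L_{w}^{-1}\to L_{v+w}^{-1}$ given by $s_{v,w}$, and define $Z=\mathrm{Spec}_{Y}\mathcal{A}$. Because every $s_{v,w}$ is a monomial in the fixed equations $x_{i}$, commutativity and associativity of $\mathcal{A}$ are automatic, and $G=\Hom(V,\mathbb{G}_{m})$ acts through the $V$-grading.

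The geometric hypotheses then govern the local structure. Since the $C_{i}$ are smooth, pairwise disjoint $(-2)$-curves, near an appearing $C_{i}$ exactly one $\ZZ/2$-factor ramifies: the generator $t_{v}$ of the degree-$v$ part with $v_{i}=1$ satisfies $t_{v}^{2}=x_{i}\cdot(\text{unit})$, so the local model is $t^{2}=x_{i}$ times an étale factor. Hence $Z$ is smooth, the inertia along $C_{i}$ is the reflection $v\mapsto(-1)^{v_{i}}$, and $\pi^{-1}(C_{i})$ breaks into $2^{r-1}$ pairwise disjoint smooth rational curves $\tilde C_{i}$, each mapping isomorphically to $C_{i}$. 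A one-line intersection computation, $2\tilde C_{i}^{2}=\pi^{*}C_{i}\cdot\tilde C_{i}=C_{i}^{2}$, gives $\tilde C_{i}^{2}=\tfrac12 C_{i}^{2}=-1$, so these are precisely the $(-1)$-curves contracted to form $\bar Z$. Connectedness follows because for $v\neq0$ the divisor $B_{v}$ is a nonempty sum of distinct curves, forcing $L_{v}\not\cong\mathcal{O}_{Y}$, so no summand splits off a trivial cover; uniqueness follows because the data $(L_{v})$ is pinned down by $2L_{v}\equiv B_{v}$ together with ${}_{2}Pic(Y)=0$, leaving no choices.

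Finally I would compute the invariants from $\pi_{*}\mathcal{O}_{Z}=\bigoplus_{v}L_{v}^{-1}$. By Riemann--Roch on $Y$, using $K_{Y}\cdot C_{i}=-2-C_{i}^{2}=0$ and $B_{v}^{2}=-2\,|v|$ (disjointness and $C_{i}^{2}=-2$), one gets $L_{v}^{2}=-|v|/2$ and $L_{v}\cdot K_{Y}=0$, hence $\chi(L_{v}^{-1})=\chi(\mathcal{O}_{Y})-|v|/4$; summing over $v$ and noting that each appearing $C_{i}$ has $v_{i}=1$ for exactly half of $V$ (so the weight sum equals $k\,2^{r-1}$ when all $k$ curves appear) yields $\chi(\mathcal{O}_{Z})=2^{r}\chi(\mathcal{O}_{Y})-k\,2^{r-3}$, which is unchanged by contracting $(-1)$-curves. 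For the canonical class, $K_{Z}=\pi^{*}K_{Y}+R$ with $R=\sum_{i}\pi^{-1}(C_{i})_{\mathrm{red}}$ the $k\,2^{r-1}$ disjoint $(-1)$-curves; using $(\pi^{*}K_{Y})^{2}=2^{r}K_{Y}^{2}$, $\pi^{*}K_{Y}\cdot R=0$, and $R^{2}=-k\,2^{r-1}$ gives $K_{Z}^{2}=2^{r}K_{Y}^{2}-k\,2^{r-1}$, and contracting the $(-1)$-curves restores $K_{\bar Z}^{2}=2^{r}K_{Y}^{2}$. Since the contraction $c\colon Z\to\bar Z$ satisfies $c^{*}K_{\bar Z}=\pi^{*}K_{Y}$, the canonical ring of $\bar Z$ is the pullback of that of $Y$, so $\kappa(\bar Z)=\kappa(Y)$.

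The invariant computations are routine bookkeeping once the cover is built. The genuine obstacle is the first step: erecting the abelian-cover formalism so that $\mathcal{A}$ really is a normal $G$-algebra with the prescribed branch locus, and then establishing smoothness, connectedness and uniqueness. Each of these three properties rests squarely on the two standing hypotheses --- the pairwise disjointness of the $(-2)$-curves $C_{i}$ (which keeps every inertia group a single reflection and the $\tilde C_{i}$ disjoint) and the vanishing ${}_{2}Pic(Y)=0$ (which both supplies and rigidifies the square roots $L_{v}$). Without either hypothesis the cover can acquire singularities, become disconnected, or fail to be determined by its branch data.
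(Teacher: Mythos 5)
Your proposal is correct and follows essentially the same route as the source: the paper itself gives no proof of this proposition (it is quoted directly from Dolgachev--Mendes Lopes--Pardini, Proposition 2.3), and your reconstruction via Pardini-style $(\mathbb{Z}/2\mathbb{Z})^{r}$-cover building data $\{L_{v},B_{v}\}$ rigidified by ${}_{2}Pic(Y)=0$, with the local smoothness analysis along the disjoint $(-2)$-curves and the eigensheaf computation of $\chi$, is exactly the argument of that reference. The one imprecision is your last sentence: since $H^{0}(\bar{Z},mK_{\bar{Z}})\cong\bigoplus_{v\in V}H^{0}(Y,mK_{Y}-L_{v})$, the canonical ring of $\bar{Z}$ is strictly larger than the pullback of that of $Y$ in general, so the bound $\kappa(\bar{Z})\leq\kappa(Y)$ requires the extra (easy) observation that $2(mK_{Y}-L_{v})+B_{v}=2mK_{Y}$ forces $h^{0}(mK_{Y}-L_{v})\leq h^{0}(2mK_{Y})$, after which the claimed equality of Kodaira dimensions follows.
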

\begin{proof}
(Of Proposition \ref{pro:if enought nodes}). We have to prove
that for our surface $Y$, $\ker\psi$ has the requied dimension and
that all the curves appear in $\ker\psi$. For $c_{1}^{2}(Y)=4$
and $2$, we have $b_{2}(Y)=h^{1,1}(Y)=6$ and $8$ respectively.
As we supposed that $_{2}Pic(Y)=0$, the space $Pic(Y)\otimes\mathbb{F}_{2}$
is $h^{1,1}$ dimensional. As $p_{g}=0$, it has moreover a non-degenerate
intersection pairing and therefore the dimension of a totally isotropic
space in $Pic(Y)\otimes\mathbb{F}_{2}$ is at most $\left[\frac{h^{1,1}}{2}\right]=$
$3,\,\mbox{or }4$ dimensional respectively. The image of $\psi$
is the totally isotropic space generated by the curves $C_{i}$, therefore
the dimension $r$ of $\ker\psi$ is at least $1$ and $2$ respectively.
\\
 A smooth double cover of a surface with $n$ nodes can exist only
if $n$ is divisible by $4$ (see \cite{Dolgachev}). Therefore the
vectors $v=(v_{1},\dots,v_{k})$ in $\ker\psi$ (of dimension $\leq7$)
have weight $4$ i.e. the number of indices $j$ such that $v_{j}=1$
is $4$.\\
 In case a), $\ker\psi$ is one dimensional, generated by $w_{1}=(1,1,1,1)$.
For b), as every vector in $\ker\psi$ has weight $4$, by \cite{Beauville}
Lemme 1, we have $k\geq2^{r}-1$ and thus $r\leq2$ and $r\leq3$
respectively. Moreover, it is easy to check that in the case b), the
space $\ker\psi$ is (up to permutation of the basis vectors) generated
by $w_{1}=(1,1,1,1,0,0)$ and $w_{2}=(1,1,0,0,1,1)$. 
\end{proof}
Let us give a bound on the irregularity: 
\begin{lem}
Let $Y$ be a surface of general type with $\chi=1$ and $q=0$ containing
a $2$-divisible set of $4\ (-2)$-curves. Let $Y'\to Y$ be the double
cover. Then $q(Y')\leq1$.\end{lem}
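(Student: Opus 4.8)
The plan is to push everything down to the base $Y$ via the cyclic-cover formalism and then run a Castelnuovo--de Franchis argument on the cover. Writing $2L\equiv C_1+\dots+C_4$ for the $2$-divisibility and $\pi:Y'\to Y$ for the associated double cover, we have $\pi_*\mathcal{O}_{Y'}=\mathcal{O}_Y\oplus L^{-1}$, so $q(Y')=q(Y)+h^1(Y,L^{-1})$ and $p_g(Y')=p_g(Y)+h^0(Y,K_Y+L)$. First I would record the numerical input coming from the curves: adjunction on each $(-2)$-curve gives $K_Y\cdot C_i=0$, hence $K_Y\cdot L=0$, while disjointness gives $L^2=\tfrac14\sum C_i^2=-2$. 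Riemann--Roch then yields $\chi(L^{-1})=\chi(\mathcal{O}_Y)+\tfrac12 L^{-1}(L^{-1}-K_Y)=1+\tfrac12(L^2+L\cdot K_Y)=0$, and $L^{-1}$ is not effective, so $h^0(L^{-1})=0$ and $q(Y')=h^1(L^{-1})=h^0(K_Y+L)=p_g(Y')$. In particular $\chi(\mathcal{O}_{Y'})=1$, and since $q(Y)=p_g(Y)=0$ the covering involution $\iota$ acts as $-1$ on both $H^0(Y',\Omega^1_{Y'})$ and $H^0(Y',\Omega^2_{Y'})$.

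The key structural observation is that, because $\iota^{\ast}=-1$ on $1$-forms and on $2$-forms, the wedge map $\wedge^2H^0(\Omega^1_{Y'})\to H^0(\Omega^2_{Y'})$ is $\iota$-equivariant from the $(+1)$-eigenspace to the $(-1)$-eigenspace, and therefore vanishes identically. Thus if $q(Y')\ge 2$, any two independent holomorphic $1$-forms have vanishing wedge, and by the Castelnuovo--de Franchis theorem (in Catanese's form) $Y'$ carries a fibration $f:Y'\to B$ onto a smooth curve with $g(B)=q(Y')$ and $H^0(\Omega^1_{Y'})=f^{\ast}H^0(\Omega^1_B)$. As $Y'$ is of general type its fibres $F$ have genus $\ge 2$, so Beauville's inequality $\chi(\mathcal{O}_{Y'})\ge(g(B)-1)(g(F)-1)$ together with $\chi(\mathcal{O}_{Y'})=1$ forces $g(B)=g(F)=2$; hence $q(Y')\le 2$, with equality only in the extremal (isotrivial) case.

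The main obstacle is to exclude $q(Y')=2$. In that case equality in Beauville's bound means the minimal model $\bar{Y'}$, obtained by contracting the four $(-1)$-curves lying over the $C_i$, is a surface isogenous to a product $\bar{Y'}=(C_1\times C_2)/G$ with $G$ acting freely; such surfaces have ample canonical class, contain no $(-2)$-curves, and are classified in \cite{bauercatanesegrunewald}. I would first note that the vanishing of all wedge products forces $\{g(C_1/G),g(C_2/G)\}=\{2,0\}$ (the alternative $\{1,1\}$ would give two independent pencils and a nonzero wedge), so the genus-$2$ quotient carries the hyperelliptic involution induced by $\iota$. The involution $\iota$ descends to $\bar\iota$ on $\bar{Y'}$ whose fixed locus is exactly the four points produced by the contracted ramification curves, acting as $(-1,-1)$ on each tangent space and as $-1$ on $H^{1,0}$ and $H^{2,0}$. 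Lifting $\bar\iota$ to $C_1\times C_2$ and analysing the induced involutions on the two factors, I would show that producing precisely four isolated fixed points whose quotient nodes form a \emph{$2$-divisible} set of four $(-2)$-curves is incompatible with freeness of $G$. I expect this incompatibility step to be the genuinely delicate point, rather than the cohomological bookkeeping, which is routine: the holomorphic Lefschetz formula alone is satisfied in this configuration (both sides equal $1$) and so does not by itself eliminate it.
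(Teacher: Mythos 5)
Your reduction to the case $q(Y')=2$ is correct and in fact more detailed than the paper's: the eigenspace computation showing that $\iota^{*}=-1$ on both $H^{0}(\Omega^{1}_{Y'})$ and $H^{0}(\Omega^{2}_{Y'})$, hence that the wedge map $\wedge^{2}H^{0}(\Omega^{1}_{Y'})\to H^{0}(\Omega^{2}_{Y'})$ vanishes, is exactly the paper's starting point, and the Castelnuovo--de Franchis plus Arakelov/Beauville step pinning down $g(B)=g(F)=2$ with an \'etale genus-$2$ bundle structure on the minimal model is the content the paper imports from Zucconi. The problem is that everything after that is announced rather than proved. The lemma's entire force is the exclusion of $q(Y')=2$, and at precisely that point you write ``I would show that \dots is incompatible with freeness of $G$'' and then concede that the holomorphic Lefschetz formula is consistent with the configuration. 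That is a genuine gap, not bookkeeping: you have proved $q(Y')\leq 2$, not $q(Y')\leq 1$.

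The paper closes this gap by a concrete geometric argument that you should be able to adapt. The involution $\iota$ maps fibres of $f:Y''\to B$ to fibres (since $H^{0}(\Omega^{1}_{Y''})=f^{*}H^{0}(\Omega^{1}_{B})$ is $\iota$-stable and $\iota^{*}=-1$ on it), so it induces an involution on $B$ acting as $-1$ on $H^{0}(\Omega^{1}_{B})$; as $g(B)=2$ this is the hyperelliptic involution, with quotient $\mathbb{P}^{1}$ and six branch points. One obtains a commutative square with vertical genus-$2$ fibrations $Y''\to B$ and $X\to\mathbb{P}^{1}$, where $X$ is the $4$-nodal surface obtained by contracting the $C_{i}$ on $Y$, and $Y''\to X$ is \'etale in codimension one. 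Base change and normalization then force the six fibres of $X\to\mathbb{P}^{1}$ over the branch points of $B\to\mathbb{P}^{1}$ to be double fibres; since $X$ has only four singular points, at least two of these double fibres lie entirely in the smooth locus of $X$, and a genus-$2$ fibration admits no multiple fibres (adjunction). This contradiction is what actually kills $q(Y')=2$; without it, or a completed version of your ``incompatibility with freeness of $G$'' analysis on $C_{1}\times C_{2}$, the proof is not done.
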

\begin{proof}
As $q(Y)=0$, the involution $\sigma$ on $Y'$ given by the cover
$Y'\to Y$ acts as multiplication by $-1$ on $H^{0}(Y',\Omega_{Y'})$.
Therefore, $\sigma$ acts trivialy on $\wedge^{2}H^{0}(Y',\Omega_{Y'})$.
As $p_{g}(Y)=0$, the map $\wedge^{2}H^{0}(Y',\Omega_{Y'})\to H^{0}(Y',\wedge^{2}\Omega_{Y'})$
must be $0$. Let $Y'\to Y''$ be the blow-down map of the $4\ (-1)$-curves
over the $4$ nodal curves of $Y$. If $q(Y'')\geq1$, Castelnuovo-De
Franchis Theorem implies that there is a fibration onto a curve $B$
of genus $q(Y'')$. By \cite{Zucconi}, we get that $q(Y'')\leq2$
and if $q(Y'')=2$, then $Y''$ is an \'etale bundle of genus $2$
fibers onto a genus $2$ curve $B$ and $K_{Y''}^{2}=8$. In that
case, there is a commutative diagram\[
\begin{array}{ccc}
Y'' & \to & X\\
\downarrow &  & \downarrow\\
B & \to & \mathbb{P}^{1}\end{array}\]
 where the vertical maps are genus 2 fibrations and $X$ is
the surface obtained by contracting the $4(-2)$-curves on $Y$. This
diagram is obtained from $B\to\mathbb{P}^{1}$ by taking base change
and normalizing. Since $Y''\to X$ is unramified in codimension $1$,
the $6$ fibers of $X\to\mathbb{P}^{1}$ occurring at the 6 branch
points of $B\to\mathbb{P}^{1}$ are double. Since $X$ has only 4
singular points, $X\to\mathbb{P}^{1}$ has at least two double fibers
contained in the smooth locus of X, but a multiple fiber on a genus
2 fibration cannot exists (because of the adjunction formula). Thus
$q\leq1$. 
\end{proof}
Let us now consider a smooth minimal surface of general type $Z$
with $K^{2}=2,\, c_{2}=10$, $q=p_{g}=0$ such that there is a birational
map onto a surface $Y$ with singularities $2A_{3,1}+2A_{3,2}$. 
\begin{prop}
Suppose that $_{3}Pic(Z)=0$. There exists a smooth triple cover $X$
of $Y$ ramified precisely over the singularities of $Y$. The surface
$X$ is of general type and has invariants $c_{1}^{2}=2c_{2}=8$.\end{prop}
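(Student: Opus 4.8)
The plan is to construct $X$ as a connected cyclic triple ($\ZZ/3\ZZ$-)cover $\pi\colon X\to Y$ which is étale in codimension one, ramified set-theoretically only over the four singular points, and whose local monodromy around each of them generates the local fundamental group $\ZZ/3\ZZ$ of the corresponding quotient singularity. Such a cover automatically unwinds each singular point to a smooth point, so that $X$ is smooth. Granting the cover, the invariants are immediate. Write $E_1,E_2$ for the two $(-3)$-curves contracted by $Z\to Y$ to the $A_{3,1}$-points and $F_1^{(i)},F_2^{(i)}$ for the two $A_2$-chains contracted to the $A_{3,2}$-points. From $K_Z\cdot E_i=-2-E_i^2=1$ one gets the discrepancy $-\tfrac13$, i.e. $\pi_Z^*K_Y=K_Z+\tfrac13(E_1+E_2)$, whence $K_Y^2=K_Z^2+\tfrac23(K_Z\cdot(E_1+E_2))+\tfrac19(E_1+E_2)^2=2+\tfrac43-\tfrac23=\tfrac83$. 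Since $\pi$ is unramified in codimension one, $K_X=\pi^*K_Y$ and $K_X^2=3K_Y^2=8$, while $e(X)=3\,e(Y)-2\cdot 4=3\cdot 4-8=4$ (using $e(Y)=e(Z)-6=4$). Thus $c_2(X)=4$, $\chi(\mathcal O_X)=\tfrac1{12}(K_X^2+c_2)=1$, and $X$ is of general type because $K_X=\pi^*K_Y$ is nef and big; this gives $c_1^2=2c_2=8$. The whole content therefore lies in constructing the cover.

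To build it I would first pass to a modified resolution: blowing up the two nodes $F_1^{(i)}\cap F_2^{(i)}$ of the $A_2$-configurations yields $Z'\to Z$, on which each $A_2$-chain becomes $G_1^{(i)}-\ell_i-G_2^{(i)}$ with $\ell_i$ a $(-1)$-curve and $G_1^{(i)},G_2^{(i)}$ two $(-3)$-curves. Together with $E_1,E_2$ this exhibits six \emph{pairwise disjoint} $(-3)$-curves $C_1,\dots,C_6$ on $Z'$. The purpose of this modification is that, modulo $3$, a $(-3)$-curve is self-orthogonal and the six curves are mutually orthogonal, so their classes span a totally isotropic subspace of $\NS(Z')\otimes\FF_{3}$. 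As $Z'$ is obtained from $Z$ by two blow-ups, $h^{1,1}(Z')=b_2(Z')=10$; since $p_g=0$ the intersection form is nondegenerate, so a totally isotropic subspace has dimension at most $\left[\tfrac{h^{1,1}}{2}\right]=5$. The six classes being $\QQ$-independent, there is a nonzero relation $\sum_j m_jC_j\equiv 0$ in $\NS(Z')\otimes\FF_{3}$, i.e. $\sum_j m_jC_j=3L$ with $L\in Pic(Z')$. Because $_3Pic(Z')={}_3Pic(Z)=0$ this $L$ is unique, and it defines a connected $\mu_3$-cover $w\colon W'\to Z'$ with $w_*\mathcal O_{W'}=\mathcal O\oplus L^{-1}\oplus L^{-2}$, totally ramified along each $C_j$ with $m_j\neq 0$ and unramified in codimension one elsewhere.

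It remains to verify smoothness and to contract correctly. Along each $C_j$ the cover is totally ramified, so $w^*C_j=3\widetilde C_j$ with $\widetilde C_j^2=\tfrac13 C_j^2=-1$; contracting these $(-1)$-curves collapses each $E_i$-preimage to a smooth point, while over each $A_2$ the configuration $\widetilde G_1^{(i)}(-1)-\widetilde\ell_i(-3)-\widetilde G_2^{(i)}(-1)$ contracts successively to a single smooth point, exactly reproducing the local uniformizations $\CC^2\to\CC^2/\mu_3$ of both singularity types. This \emph{mixing of the two types} is where I expect the main obstacle: for the $A_2$-picture to unwind to a \emph{connected} smooth point one needs the restriction of $w$ to $\ell_i$ — a $\mu_3$-cover of $\PP^1$ branched at the two points $\ell_i\cap G_1^{(i)}$ and $\ell_i\cap G_2^{(i)}$ — to be connected, which forces conjugate coefficients $m_{G_1^{(i)}}+m_{G_2^{(i)}}\equiv 0\pmod 3$, hence both nonzero; likewise $m_{E_1},m_{E_2}\neq 0$. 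Thus the relation produced by the dimension count must be upgraded to one all of whose coefficients are nonzero and which is conjugate-balanced on each $A_2$-block, so that ramification occurs over all four points at once. I would secure this by analysing $\ker(\FF_{3}^{\,6}\to\NS(Z')\otimes\FF_{3})$ against these block constraints (a vanishing coefficient would leave the corresponding singularity unresolved on $W'$) and by using the connectedness guaranteed by $_3Pic(Z')=0$. Once an admissible $(m_j)$ is fixed, contracting the exceptional curves of $W'$ yields the smooth triple cover $X\to Y$, ramified precisely over the four singular points, with the stated invariants $c_1^2=2c_2=8$.
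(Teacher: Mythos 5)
Your construction follows the same route as the paper's: blow up the two nodes of the $A_{2}$-configurations to obtain six disjoint $(-3)$-curves, find a nonzero vector in the kernel of $\psi:\FF_{3}^{6}\to Pic(W)\otimes\FF_{3}$ by the isotropy/dimension count, take the associated triple cover, and contract. Your computation of the invariants (via $K_{Y}^{2}=8/3$ and \'etaleness in codimension one) is correct and in fact cleaner than the paper's, which instead applies Urzua's formulas to a resolution $R$ of the possibly singular cover and counts blow-downs. However, there is a genuine gap exactly at the point you yourself flag as "the main obstacle": you never prove that the kernel vector $(m_{1},\dots,m_{6})$ can be chosen with \emph{all} coordinates nonzero. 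The dimension count only gives $\dim\ker\psi\geq1$; the balancing relation $m_{G_{1}^{(i)}}+m_{G_{2}^{(i)}}\equiv0\pmod 3$ (which, as the paper notes, is automatic from $\ell_{i}\cdot\sum m_{j}C_{j}=3\,\ell_{i}\cdot L\equiv0$) only shows that the two coefficients on each $A_{2}$-block vanish or not \emph{together} -- your "hence both nonzero" does not follow -- and it says nothing at all about $m_{E_{1}},m_{E_{2}}$. A priori the kernel vector could be supported on a single block, in which case the cover is unramified near the other singularities, their preimages are three disjoint copies of the corresponding exceptional configurations, and the contracted surface $X$ is not smooth (nor ramified precisely over the singularities of $Y$).

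The paper closes this by an integrality argument you are missing: applying Urzua's formula $\chi(\OO_{R})=3\chi(\OO_{W})-\frac{r}{3}$ to the resolution of the cover branched over the $r$ curves with nonzero coefficient forces $3\mid r$, hence $r=3$ or $6$; since $r\geq3$ puts at least one curve from an $A_{2}$-block in the branch locus, the balancing relation brings in its partner and pushes $r$ to $6$. Some input of this kind (or an equivalent numerical constraint on the weight of vectors in $\ker\psi$) is indispensable; "analysing $\ker\psi$ against these block constraints" is the theorem, not a remark, and as written your argument could terminate with a cover ramified over only some of the four singular points.
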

\begin{proof}
Let $D_{1},D_{2}$ be the $(-3)$-curves over the $2$ singularities
$A_{3,1}$ and let $D_{3},\dots,D_{6}$ be the $(-2)$ curves over
the singularities $A_{3,2}$, with indices satisfying: $D_{3}D_{4}=D_{5}D_{6}=1$.
Let $W\rightarrow Y$ be the blow up at the intersection points of
$D_{3},D_{4}$ and of $D_{5},D_{6}.$ Let $C_{1},\dots,C_{6}$ be
the strict transforms of the $D_{i}$ in $W$. Let\[
\psi:\mathbb{F}_{3}{}^{6}\to Pic(W)\otimes\mathbb{F}_{3}=H^{2}(W,\mathbb{F}_{3})\]
 be the homomorphism sending $v=(v_{1},\dots,v_{k})$ to $\sum v_{i}C_{i}$.
The image of $\psi$ is a totally isotropic subspace in $H^{2}(W,\mathbb{F}_{3})$.
As $b_{2}(W)=10$, this image is at most $5$ dimensional and therefore
$\dim\ker\psi\geq1$. Let be $v=(v_{1},\dots,v_{6})\in\ker\psi$,
$v\not=0$. We choose the representatives of $\mathbb{F}_{3}$ in
$\{0,1,2\}$. There exist a unique invertible sheaf $L$ such that
\[
3L=\sum v_{i}C_{i}.\]
 Let $T$ be the triple cover of $W$ ramified over the $r$ curves
$C_{i}$ such that $v_{i}\not=0$. The surface $T$ is smooth outside
the curves $C_{i}$ with $v_{i}=2$. Let $R$ be the minimal resolution
of $T$ and let $f:R\to W$ be the composite map. By \cite{Urzua},
Propositions 2.2, 4.1 and $4.3$, the invariants of $R$ are: \[
K_{R}=_{num}f^{*}(K_{W}+\frac{2}{3}\Sigma),\, c_{2}(R)=3c_{2}(W)-4r,\,\chi(\mathcal{O}_{R})=3\chi(\mathcal{O}_{W})-\frac{r}{3},\]
 where $\Sigma$ is the sum of the $r$ curves $C_{i}$ such that
$v_{i}\not=0$. Therefore $r=3$ or $6$ and \[
K_{R}^{2}=0,\, c_{2}(R)=36-4r,\,\chi(\mathcal{O}_{W})=3-\frac{r}{3}.\]
 As there are at least $3$ curves $C_{i}$ in the branch locus, one
of the curves $C_{3},\dots,C_{6}$ is in that branch locus. Say it
is $C_{3}$. Let $E$ be the exceptional curve going through $C_{3}$.
As $C_{3}E=C_{4}E=1$ and $E\sum v_{i}C_{i}$ is divisible by $3$,
it forces $C_{4}$ to be also in the branch locus and thus $r=6$
(and $\dim\ker\psi=1$). The inverse image of the $6$ $(-3)$-curves
are $(-1)$-curves. By the formula giving $K_{R},$ the inverse image
of the two exceptional curves are $(-3)$-curves meeting two $(-1)$-curves.
We can therefore effectuate $8$ blow-downs and we obtain a fake quadric.
It has general type because $Y$ has general type, it is minimal because
the quotient of a fake plane by an order $3$ automorphism with $4$
isolated fixed points has $4A_{2}$ singularities. \end{proof}

\vspace{0.2in}
 {\large \setlength{\parindent}{0.5in}
 
 \noindent  
 Amir D\v{z}ambi\'{c},}{\large \par}

{\large Johann Wolfgang Goethe Universität, Institut für Mathematik,}{\large \par}

{\large Robert-Mayer-Str. 6-8,}{\large \par}

{\large 60325 Frankfurt am Main,}{\large \par}

{\large Germany}{\large \par}

\texttt{\large dzambic@math.uni-frankfurt.de}{\large \par}

{\large \vspace{0.2in}
 \setlength{\parindent}{0.5in}}{\large \par}

{\large Xavier Roulleau,}{\large \par}

{\large Laboratoire de Mathématiques et Applications,}{\large \par}

{\large Universit\'e de Poitiers,}{\large \par}

{\large Téléport 2 - BP 30179 - }{\large \par}

{\large 86962 Futuroscope Chasseneuil }{\large \par}

{\large France}{\large \par}

\texttt{\large roulleau@math.}{\large univ-poitiers.fr{} } 
\end{document}